\documentclass[12pt]{amsart}

\usepackage[dvips]{color}
\usepackage{amsmath}
\usepackage{amsxtra}
\usepackage{amscd}
\usepackage{amsthm}
\usepackage{amsfonts}
\usepackage{amssymb}
\usepackage[mathscr]{eucal}
\usepackage{graphics}
\usepackage{physics}
\usepackage{hyperref}
\usepackage{mathtools}
\usepackage{enumitem}
\usepackage{tikz}
\usepackage{stmaryrd}
    \SetSymbolFont{stmry}{bold}{U}{stmry}{m}{n} 
\usepackage{verbatim}

\usepackage{orcidlink}


\usepackage{cite}

\usepackage{float}
\allowdisplaybreaks

\textwidth=18cm
\textheight=22cm
\hoffset=-2.8cm
\baselineskip=18pt plus 3pt

\numberwithin{equation}{section}
\newtheorem{thm}{Theorem}[section]
\newtheorem*{thm-nonum}{Theorem}
\newtheorem{prop}[thm]{Proposition}
\newtheorem{lem}[thm]{Lemma}

\newtheorem{conj}[thm]{Conjecture}
\newtheorem{dfn}[thm]{Definition}

\theoremstyle{definition}
\newtheorem{exa}[thm]{Example}

\theoremstyle{remark}
\newtheorem{rem}[thm]{Remark}

\newcommand{\C}{{\mathbb C}}
\newcommand{\Cx}{{\mathbb C}^\times}
\newcommand{\Z}{{\mathbb Z}}


\newcommand{\cB}{\mathcal{B}}

\newcommand{\cI}{\mathcal{I}}

\newcommand{\cS}{\mathcal{S}}


\newcommand{\hI}{\hat{I}}




\newcommand{\g}{{\mathfrak{g}}}
\newcommand{\gs}{{\mathfrak{g}_\s}}
\newcommand{\ghs}{{\widehat{\mathfrak{g}}_\s}}

\newcommand{\gh}{\widehat{\mathfrak{g}}}

\renewcommand{\sl}{\mathfrak{sl}}
\newcommand{\osp}{\mathfrak{osp}}

\newcommand{\h}{\mathfrak{h}}

\newcommand{\Uqgs}{U_q({\mathfrak{g}_\s})}
\newcommand{\Uqghs}{U_q({\widehat{\mathfrak{g}}_\s})}
\newcommand{\Uqgt}{U_q({\mathfrak{g}_\t})}
\newcommand{\Uqght}{U_q({\widehat{\mathfrak{g}}_\t})}

\newcommand{\Uq}{U_q}


\newcommand{\bs}[1]{{\boldsymbol #1 }}


\renewcommand{\Im}{\mathop{\rm Im}}
\newcommand{\Ker}{\mathop{\rm Ker}}

\newcommand{\Sym}{\mathrm{Sym}}

\newcommand{\s}{\mathbf{s}}
\renewcommand{\t}{\mathbf{t}}

\newcommand{\D}{\Delta}


\newcommand{\lb}[1]{\llbracket #1 \rrbracket}

\usepackage[final]{microtype}

\author[L. Bezerra, V. Futorny and I.Kashuba]{Luan Bezerra,  Vyacheslav Futorny,  and Iryna Kashuba}

\address[bezerra.luan@gmail.com ]{\textsc{Luan Pereira Bezerra \orcidlink{0000-0002-4979-4935}}: Instituto de Ciências Exatas, Universidade Federal de Minas Gerais, Belo Horizonte, Brazil}

 \address[futorny@sustech.edu.cn]{\textsc{Vyacheslav Futorny \orcidlink{0000-0002-4701-8879}}: Shenzhen International Center for Mathematics, Southern University of Science and Technology, China}
\address[kashuba@sustech.edu.cn]{\textsc{Iryna Kashuba  \orcidlink{0000-0001-9672-668X}}: Shenzhen International Center for Mathematics, Southern University of Science and Technology, China}

\begin{document}
\begin{title}[Drinfeld realization for Orthosymplectic superalgebras]{Drinfeld Realization for Quantum Affine Orthosymplectic Superalgebras}
\end{title}

	\begin{abstract}
A well-defined braid groupoid action is an essential tool for constructing the new Drinfeld realization of a quantum affine superalgebra. For quantum affine orthosymplectic superalgebras (types B, C, and D), this action was not fully defined, as the braid operators $T_i$ were known only up to normalization factors. In this paper, we solve this problem by providing the explicit formulas for these operators for any choice of parity. This yields a well-defined braid group action on the direct sum of these superalgebras. As a consequence, we use this action to formally introduce the new Drinfeld realization $U_q^D(\widehat{\mathfrak{g}}_s)$ for these types and prove that the corresponding Drinfeld-Jimbo quantum group $U_q(\widehat{\mathfrak{g}}_s)$ is its surjective homomorphic image. We conjecture that this map is an isomorphism.

\end{abstract}

\maketitle

\section{Introduction}

Quantum groups first arose in the work of Faddeev and his school from a quantum version of the inverse scattering method \cite{Faddeev,KS}. They were initially presented as associative algebras whose relations are expressed in terms of quantum R-matrices (the R-matrix realization) \cite{RS1}. This was soon followed by a more formal definition by Drinfeld and Jimbo \cite{Drinfeld1985,Jimbo}, who introduced the ``Drinfeld-Jimbo" realization of quantum groups as $q$-deformations $U_q(\widehat{\mathfrak{g}})$ of universal enveloping algebras, defined by finite Chevalley-type generators and $q$-deformed Serre relations.

In 1988, Drinfeld introduced a ``new" realization $U_q^D(\widehat{\mathfrak{g}})$ \cite{Drinfeld1987}, which presents the algebra using infinitely many loop-like generators. This second realization proved indispensable for the study of finite-dimensional representations \cite{ChariPressley94, ChariPressley98} and vertex representations \cite{Jing90}.

Drinfeld stated the equivalence of these two realizations \cite{Drinfeld1987}. A surjective homomorphism from the new realization to the Drinfeld-Jimbo realization was constructed by Beck \cite{beck1994} for the untwisted case, and later extended by Damiani \cite{damiani2012} for the twisted case. The full isomorphism was eventually established by Damiani in \cite{damiani2015}. A key ingredient in Beck's construction is the action of the affine braid group on $U_q(\widehat{\mathfrak{g}})$, as defined by Lusztig  \cite{Lusztig1989}.

When one moves from (even) Lie algebras to Lie superalgebras, these foundational questions become significantly more complex. The theory of quantum affine superalgebras, initiated in \cite{ZhangY,CWWZ}, must deal with several new challenges:

\begin{enumerate}
    \item A single Lie superalgebra $\mathfrak{g}_s$ has many non-isomorphic Cartan data, parameterized by a parity sequence $s \in \mathcal{S}$.
    \item The Weyl group is replaced by a more intricate object, the Weyl groupoid \cite{Yamane1999}.
    \item The braid group $B_N$ no longer acts on a single algebra $U_q(\widehat{\mathfrak{g}}_s)$, but rather on the direct sum of all algebras over all possible parities, $U_q(\widehat{\mathfrak{g}}_\bullet) := \bigoplus_{s\in\mathcal{S}} U_q(\widehat{\mathfrak{g}}_s)$.
    \item The presence of odd roots introduces new, complex Serre relations.
\end{enumerate}

The foundational work in this ``super" setting was done by Yamane \cite{Yamane1999}, who provided the Drinfeld-Jimbo realization for quantum affine superalgebras of types A-G. For type A, Yamane used a braid groupoid action to construct the new Drinfeld realization and proved the existence of a surjective homomorphism $U_q^D(\widehat{\mathfrak{g}}) \to U_q(\widehat{\mathfrak{g}})$. This was only recently proven to be an isomorphism by Lin, Yamane, and Zhang \cite{LinYamaneZhang} (under certain conditions).

Progress on other types has been steady, though often focused on specific cases or different approaches. For instance, Zhang \cite{Zhang2013} constructed a generating set of PBW type for type A with standard parity, and Tsymbaliuk \cite{Tsymbaliuk_2017} later constructed a PBW basis for any parity. Heckenberger et al. \cite{Heckenberger2007} established the Drinfeld realization and surjectivity for $D^{(1)}(2,1;x)$. For type B, Xu and Zhang \cite{XuZhang} provided the Drinfeld realization when the Dynkin diagram does not have isotropic odd roots using smash products, and Wu, Lin, and Zhang \cite{WLZ} recently demonstrated the equivalence with the R-matrix realization for type B in standard parity.

However, for the general orthosymplectic types (B, C, and D) and for any parity, a crucial piece of the Beck/Damiani approach has been missing. While Yamane \cite[Proposition 7.4.1]{Yamane1999} introduced the fundamental isomorphism operators $T_{i,\s}$ that generate the braid groupoid, they were defined only up to a normalization factor. The explicit coefficients required for these operators to satisfy the braid relations (cf. \cite{Lusztig1989}) were not computed. Without these explicit coefficients, a well-defined braid group action — the primary tool used by Beck and Damiani — is not available. This gap has prevented the construction of the Drinfeld realization and the proof of Beck's homomorphism for types B, C, and D in full generality.

The importance of these explicit coefficients is underscored by the recent work of Wu, Lin, and Zhang \cite{WLZ2025}. Utilizing the formulas for the braid action established in the present work (Theorem 4.2), they successfully proved the full isomorphism (Conjecture 5.9) for the specific case of type B with standard parity.

In this paper, we solve this problem in its full generality. Our first main result is the explicit computation of these normalization factors for the braid groupoid operators $T_{i,s}$ for all quantum affine orthosymplectic superalgebras (types B, C, and D) and for any choice of parity $s \in \mathcal{S}$.

\begin{thm-nonum}[\ref{thm-braid action}]
    The operators $T_{i,s}$ with the explicit formulas given in Section \ref{sect-formulas} satisfy the braid relations \eqref{braid relations ends}-\eqref{braid relations middle}. This defines an action of the affine braid group $B_N$ on the direct sum of superalgebras $U_q(\widehat{\mathfrak{g}}_\bullet)$.
\end{thm-nonum}

This theorem provides the key technical tool that was missing. Using this well-defined braid group action, we follow Beck's strategy to construct the loop-like generators for the Drinfeld ``new" realization. The root vectors are obtained by applying the automorphisms $T_{\omega_i}$ (built from the $T_{i,s}$ operators) to the simple Chevalley generators.

This allows us to introduce the new Drinfeld realization $U_q^D(\widehat{\mathfrak{g}}_s)$ for orthosymplectic types (Definition \ref{def drinfeld real}) and prove our second main result:

\begin{thm-nonum}[\ref{thm_main}]
For any parity $s \in \mathcal{S}$, there exists a surjective homomorphism of superalgebras $\psi_s: U_q^D(\widehat{\mathfrak{g}}_s) \to U_q(\widehat{\mathfrak{g}}_s)$.
\end{thm-nonum}

This theorem establishes that the Drinfeld-Jimbo algebra $U_q(\widehat{\mathfrak{g}}_s)$ is a homomorphic image of the new Drinfeld algebra $U_q^D(\widehat{\mathfrak{g}}_s)$. This is the first essential step toward proving a full isomorphism, which we conjecture to hold. The construction of a PBW basis and the proof of injectivity for $\psi_s$ will be the subject of a subsequent paper.

The structure of the present paper is as follows. In Section~\ref{sect-affine-Lie-superalgebras}, we recall the basic notions on orthosymplectic Lie superalgebras and their root systems, parities, and the affine Weyl groupoid. In Section~\ref{sect-quantum-JD}, we define the Drinfeld-Jimbo realization $U_q(\widehat{\mathfrak{g}}_s)$. In Section~\ref{sect-formulas}, we define the braid group operators $T_{i,s}$, provide their explicit formulas, and prove that they satisfy the braid relations. Finally, in Section~\ref{sect-quantum-newD}, we introduce the new Drinfeld realization $U_q^D(\widehat{\mathfrak{g}}_s)$ and use the braid group action to construct the surjective homomorphism $\psi_s$, proving our main result.

\section{Affine Lie superalgebras}\label{sect-affine-Lie-superalgebras}
\subsection{Orthosymplectic Lie superalgebras}

We work over the field of complex numbers $\mathbb{C}$.

A superalgebra is a $\Z_2$-graded algebra $A = A_0 \oplus A_1$. Elements of $A_0$ are called even, and elements of $A_1$ are called odd. We denote the parity of an element $v \in A_i$ by $|v| = i$, where $i \in \Z_2$.

Fix $m, n \in \Z_{\geq 0}$, $n\neq 0$, and let $N=m+n$. 

Let $\g = \g_0 \oplus \g_1$ be the Lie superalgebra $\osp(2m+1,2n)$ of type $B$, the Lie superalgebra $\osp(2m,2n)$ of type $D$ ($m\geq 2$), or the Lie superalgebra $\osp(2,2n)$  of type $C$. Let $\widehat{\g}$ be the corresponding affine Lie superalgebra, as described in \cite{Yamane1999}. The set of Dynkin nodes is denoted by $I = {1,2,\dots,N}$ and $\hat{I} = {0,1,\dots,N}$, respectively. The elements of $\hat{I}$ are always considered modulo $N+1$. 

Let $\h\subset \g_0$ be a fixed Cartan subalgebra of $\g$ and consider the root space decomposition of $\g$
\begin{align*}
	\g = \h\oplus \bigoplus_{\alpha\in \D}\g_\alpha, 
\end{align*}
where $\D=\{\alpha\in \h^*\setminus\{0\}\mid \g_\alpha\neq 0\}$ is the root system of $\g$, and $\g_\alpha = \{x\in \g\mid [h,x]=\alpha(h)x\ \forall h\in \h\}$ is the root space associated to the root $\alpha$. Recall that every root is either purely even or purely odd, meaning that, for every $\alpha\in \D$, we have $\g_\alpha\subset \g_0$ or $\g_\alpha\subset \g_1$, respectively. In particular, $\D=\D_0\cup\D_1$, where $\D_i=\{\alpha\in \D\mid \g_\alpha\subset \g_i\}$ for $i\in \Z_2$. 

Lie superalgebras $\g$ and $\gh$ have many non-isomorphic \emph{Cartan data} (Dynkin diagrams, isomorphism classes of sets of simple roots and Cartan matrices). The equivalence classes of Cartan data for both $\g$ and $\gh$ are
parameterized by the set $\cS$ of $N$-tuples of $\pm 1$ with exactly $m$ positive coordinates
\begin{align*}
	\cS:=\{\s=(s_1,\dots,s_N)|s_i\in \{-1,+1\},\#\{i|s_i=1\}=m\}. 
\end{align*}
An element of $\cS$ is called a \emph{parity sequence}.

\begin{rem}\label{standard}
    The parity sequence $\s=(1,\dots,1,-1,\dots,-1)$ is called the \emph{standard} parity sequence.
\end{rem}

If $\s$ is a parity sequence of $\g$, we denote by $\gs$ the Lie superalgebra $\g$ with fixed Cartan data given by $\s$. Moreover, whenever the subscript $\s$ appears, the object is considered to be associated with $\gs$ with a fixed choice of parity. However, we will often omit the subscript $\s$ if the chosen parity is clear from the context.

Let $\s\in \cS$ be a parity sequence and consider the integral lattice $P_\s$ with basis $\{\delta\}\cup\{ \varepsilon_i\}_{i=1}^{N}$,  and bilinear form given by
\begin{align*}
	( \delta| \varepsilon_i )_\s=0, \quad( \varepsilon_i| \varepsilon_j )_\s= s_i\delta_{i,j}, \quad i,j=1,\dots, N. 
\end{align*}

The various data will be described in detail using the notation of parity sequences $\s \in \cS$ in the following subsection. We adopt the following conventions for the Dynkin diagrams. Recall that a simple root $\alpha$ of $\gs$ may be even, odd and isotropic ($(\alpha|\alpha)=0$), or odd and non-isotropic. We say its corresponding node $i$ has parity $|i|:=|\alpha_i|$ and it will be depicted in the Dynkin diagram as follows:
\begin{itemize}
	\item If $\alpha$ is even, then we draw an empty circle \begin{tikzpicture}[baseline=-3pt]
	      \draw (0,0pt) circle (0.15cm);
	\end{tikzpicture}.
	\item If $\alpha$ is odd and isotropic, then we draw a crossed circle \begin{tikzpicture}[baseline=-3pt]
	      \draw (0,0pt) circle (0.15cm);
	      \draw[rotate=45] (-0.15,0)--(0.15,0);
	      \draw[rotate=-45] (-0.15,0)--(0.15,0); 
	\end{tikzpicture}.
	\item If $\alpha$ is odd and non-isotropic, then we draw a filled circle \begin{tikzpicture}[baseline=-3pt]
	      \draw[fill=black] (0,0pt) circle (0.15cm);
	\end{tikzpicture}.
\end{itemize}

In many cases, it is useful to draw the diagram allowing multiple options for the parities of the nodes. 

\begin{itemize}
	\item If a node is either even (\begin{tikzpicture}[baseline=-3pt]
	      \draw (0,0pt) circle (0.15cm);
	\end{tikzpicture}) or odd and isotropic (\begin{tikzpicture}[baseline=-3pt]
	\draw (0,0pt) circle (0.15cm);
	\draw[rotate=45] (-0.15,0)--(0.15,0);
	\draw[rotate=-45] (-0.15,0)--(0.15,0); 
	\end{tikzpicture}), then we draw a cross \begin{tikzpicture}[baseline=-3pt]
	\draw[rotate=45] (-0.15,0)--(0.15,0);
	\draw[rotate=-45] (-0.15,0)--(0.15,0); 
	\end{tikzpicture}.
	\item If a node is either even (\begin{tikzpicture}[baseline=-3pt]
	      \draw (0,0pt) circle (0.15cm);
	\end{tikzpicture}) or odd and non-isotropic (\begin{tikzpicture}[baseline=-3pt]
	\draw[fill=black] (0,0pt) circle (0.15cm);
	\end{tikzpicture}), then we draw two concentric circles with the smaller one filled \begin{tikzpicture}[baseline=-3pt]
	\draw (0,0pt) circle (0.15cm);
	\draw[fill=black] (0,0pt) circle (0.05cm);
	\end{tikzpicture}.
	\if{    \item If a node is either even (\begin{tikzpicture}[baseline=-3pt]
		\draw (0,0pt) circle (0.15cm);
		\end{tikzpicture}), odd and isotropic (\begin{tikzpicture}[baseline=-3pt]
		\draw (0,0pt) circle (0.15cm);
		\draw[rotate=45] (-0.15,0)--(0.15,0);
		\draw[rotate=-45] (-0.15,0)--(0.15,0); 
		\end{tikzpicture}), or odd and non-isotropic (\begin{tikzpicture}[baseline=-3pt]
		\draw[fill=black] (0,0pt) circle (0.15cm);
		\end{tikzpicture}), then we combine the cross and the small filled circle  \begin{tikzpicture}[baseline=-3pt]
		\draw[rotate=45] (-0.15,0)--(0.15,0);
		\draw[rotate=-45] (-0.15,0)--(0.15,0); 
		\draw[fill=black] (0,0pt) circle (0.05cm);
		\end{tikzpicture}.}\fi
\end{itemize}

The nodes $i$ and $j$ of a Dynkin diagram are linked by $\dfrac{4(\alpha_i,\alpha_j)_{\s}^2}{(\alpha_i,\alpha_i)_{\s}(\alpha_j,\alpha_j)_{\s}}$ edges if $(\alpha_i,\alpha_i)_{\s}(\alpha_j,\alpha_j)_{\s}\neq 0$,  and $|(\alpha_i,\alpha_j)_{\s}|$ if $(\alpha_i,\alpha_i)_{\s}(\alpha_j,\alpha_j)_{\s}=0$. If nodes $i$ and $j$ are linked and $|(\alpha_i,\alpha_i)_{\s}|>|(\alpha_j,\alpha_j)_{\bs{s}}|$, we add an arrow pointing towards the node $j$. Similar to the case of the nodes, we sometimes allow multiple options for the edges in a diagram. A dotted line \begin{tikzpicture}[baseline=-3pt]
\draw[dotted] (0,0) -- (.5,0);
\end{tikzpicture}  means that the edge may or may not be present, depending on the parities of the nodes. 

\subsection{Root systems}
\subsubsection{Type B root systems}
Fix $\s \in \cS$.
Although the root system itself does not depend on $\s$, the choice of the parity $\s$ fixes a basis of $P_\s$, and this changes the expression of the roots as sums of simple roots.

The root system $\Delta_\s = \Delta_{0,\s} \cup \Delta_{1,\s}$ of $\gs$ is given by:
\begin{align*}
	  & \Delta_\s = \{\pm \varepsilon_i \pm \varepsilon_j, \pm \varepsilon_i, \pm 2\varepsilon_k \mid 1 \leq i,j,k \leq N, s_k = -1\},\\                         
	  & \Delta_{0,\s} = \{\pm \varepsilon_i \pm \varepsilon_j, \pm \varepsilon_k, \pm 2\varepsilon_l \mid 1 \leq i,j,k,l \leq N, s_i = s_j, s_k = 1, s_l = -1\},\\ 
	  & \Delta_{1,\s} = \{\pm \varepsilon_i \pm \varepsilon_j, \pm \varepsilon_k \mid 1 \leq i,j,k \leq N, s_i = -s_j, s_k = -1\}.
\end{align*} 

The simple roots are given by:
\begin{align*}
	\alpha_i=\begin{cases}
	\varepsilon_{i}-\varepsilon_{i+1}, & \text{ if } i=1,\dots,N-1; \\
	\varepsilon_{N},                   & \text{ if } i=N.           
	\end{cases}
\end{align*}

The parities of the simple roots $\alpha_i$, $i \in I$, are given by:
\begin{align*}
	|\alpha_i|=\begin{cases}
	(1-s_i s_{i+1})/2, & \text{ if } i=1,\dots,N-1; \\
	(1-s_N)/2,         & \text{ if } i=N.           
	\end{cases}
\end{align*}

Note that if $s_N = -1$, the simple root $\alpha_N$ is odd and non-isotropic.

The null root $\delta$ is even for all $\s$.

The longest positive root $\theta$ of $\Delta_\s$ depends on $\s$ and is given by:
\begin{align*}
	\theta = \begin{cases}
	\varepsilon_{1} + \varepsilon_{2}, & \text{ if } s_1 = 1;  \\
	2\varepsilon_{1},                  & \text{ if } s_1 = -1. 
	\end{cases}
\end{align*}

This implies that the parity of the simple root $\alpha_0 = \delta - \theta$ of $\ghs$ is given by:
\begin{align*}
	|\alpha_0| = \begin{cases}
	|\alpha_1|, & s_1 = 1;  \\
	0,          & s_1 = -1. 
	\end{cases}
\end{align*}

The root system of $\ghs$ is $\hat{\Delta}_\s=\{\alpha +r\delta \, |\, \alpha\in \Delta_\s \text{ and }  r\in\Z \}\cup (\Z\setminus\{0\} )\delta$.

Due to the two possible expressions for the longest root, the general shape of the associated Dynkin diagram also depends on the parity $\s$.

Following the nomenclature of \cite{Yamane1999}, the Dynkin diagram in Figure \ref{CBN} has shape $CB_N$, while the Dynkin diagram in Figure \ref{DBN} has shape $DB_N$. Also, we will write $(CB_N)_j$ and $(DB_N)_j$, where $j \equiv \sum_{i=1}^N|\alpha_i| \mod 2$, to specify the parity of the number of simple odd roots in the diagram.

\begin{figure}[H]
	\centering
	\begin{tikzpicture}[scale=1.5]
		\draw[rotate=45] (-0.1,0)--(0.1,0);
		\draw[rotate=-45] (-0.1,0)--(0.1,0);
		
		\draw (-1,0) circle (0.1);
		\draw (-0.87,0.05)--(-0.15,0.05);	
		\draw (-0.87,-0.05)--(-0.15,-0.05);	
		\draw[shift={(-0.1,0)}, rotate=45] (-0.15,0)--(0,0);
		\draw[shift={(-0.1,0)}, rotate=-45] (-0.15,0)--(0,0);
		 
		\draw (0.1,0)--(0.9,0);
			
		\draw[shift={(1,0)}, rotate=45] (-0.1,0)--(0.1,0);
		\draw[shift={(1,0)}, rotate=-45] (-0.1,0)--(0.1,0);
					
		\draw[dashed] (1.1,0)--(1.9,0);	
			
		\draw[shift={(2,0)}, rotate=45] (-0.1,0)--(0.1,0);
		\draw[shift={(2,0)}, rotate=-45] (-0.1,0)--(0.1,0);
					
		\draw (2.1,0)--(2.9,0);	
			
		\draw[shift={(3,0)}, rotate=45] (-0.1,0)--(0.1,0);
		\draw[shift={(3,0)}, rotate=-45] (-0.1,0)--(0.1,0);	
				
		\draw (3.1,0.05)--(3.8,0.05);	
		\draw (3.1,-0.05)--(3.8,-0.05);	
		\draw[shift={(3.86,0)}, rotate=45] (-0.15,0)--(0,0);
		\draw[shift={(3.86,0)}, rotate=-45] (-0.15,0)--(0,0);	
			
		
		\draw (4,0) circle (0.1);
		\draw[fill=black] (4,0) circle (0.033);
		
		\node [below] at (-1,-0.05) {\small $0$};
		\node [below] at (0,-0.05) {\Tiny $1$};
		\node [below] at (1,-0.05) {\Tiny $2$};
		\node [below] at (2,-0.05) {\Tiny $N-2$};
		\node [below] at (3,-0.05) {\tiny $N-1$};
		\node [below] at (4,-0.05) {\Tiny $N$};
	\end{tikzpicture}
	\caption{General Dynkin diagram of shape $CB_N$}
	\label{CBN}
\end{figure}

\begin{figure}[H]
	\centering
	\begin{tikzpicture}[scale=1.5]
		\draw[rotate=45] (-0.1,0)--(0.1,0);
		\draw[rotate=-45] (-0.1,0)--(0.1,0);
				
		\draw[rotate=45] (-0.8,0)--(-0.15,0);
		\draw[rotate=45] (0,0.15)--(0,0.8);
		
		\draw[rotate=45] (-0.85,0)--(-1.05,0);
		\draw[rotate=45] (-0.95,0.1)--(-0.95,-0.1);
		        
		\draw[rotate=-45] (-0.95,0.1)--(-0.95,-0.1);
		\draw[rotate=-45] (-0.85,0)--(-1.05,0);
		
		\draw[dotted] (-0.62,0.55)--(-0.62,-0.55);
		\draw[dotted] (-0.72,0.55)--(-0.72,-0.55);
		 
		\draw (0.1,0)--(0.9,0);
			
		\draw[shift={(1,0)}, rotate=45] (-0.1,0)--(0.1,0);
		\draw[shift={(1,0)}, rotate=-45] (-0.1,0)--(0.1,0);
					
		\draw[dashed] (1.1,0)--(1.9,0);	
			
		\draw[shift={(2,0)}, rotate=45] (-0.1,0)--(0.1,0);
		\draw[shift={(2,0)}, rotate=-45] (-0.1,0)--(0.1,0);
					
		\draw (2.1,0)--(2.9,0);	
			
		\draw[shift={(3,0)}, rotate=45] (-0.1,0)--(0.1,0);
		\draw[shift={(3,0)}, rotate=-45] (-0.1,0)--(0.1,0);	
				
		\draw (3.1,0.05)--(3.8,0.05);	
		\draw (3.1,-0.05)--(3.8,-0.05);	
		\draw[shift={(3.86,0)}, rotate=45] (-0.15,0)--(0,0);
		\draw[shift={(3.86,0)}, rotate=-45] (-0.15,0)--(0,0);	
			
		
		\draw (4,0) circle (0.1);
		\draw[fill=black] (4,0) circle (0.033);
		
		\node [left] at (-0.7,0.67) {\Tiny $0$};
		\node [left] at (-0.7,-0.67) {\Tiny $1$};
		\node [below] at (0,-0.05) {\Tiny $2$};
		\node [below] at (1,-0.05) {\Tiny $3$};
		\node [below] at (2,-0.05) {\Tiny $N-2$};
		\node [below] at (3,-0.05) {\Tiny $N-1$};
		\node [below] at (4,-0.05) {\Tiny $N$};
	\end{tikzpicture}
	\caption{General Dynkin diagram of shape $DB_N$}
	\label{DBN}
\end{figure}

\begin{figure}[H]
	\centering
	\begin{tikzpicture}[scale=1.5]
		  
		\draw[shift={(0.035,-0.035)},,rotate=45] (-0.8,0)--(-0.19,0);
		\draw[shift={(-0.035, 0.035)},,rotate=45] (-0.8,0)--(-0.19,0);
		\draw[shift={(0.035,0.035)},rotate=45] (0,0.19)--(0,0.8);
		\draw[shift={(-0.035,-0.035)},rotate=45] (0,0.19)--(0,0.8);
		
		\draw[shift={(-0.1,0.1)}] (0,0)--(-0.15,0);
		\draw[shift={(-0.1,0.1)}] (0,0)--(0,0.15);
		
		\draw[shift={(-0.1,-0.1)}] (0,0)--(-0.15,0);
		\draw[shift={(-0.1,-0.1)}] (0,0)--(0,-0.15);
		
		\draw[rotate=45] (-0.85,0)--(-1.05,0);
		\draw[rotate=45] (-0.95,0.1)--(-0.95,-0.1);
		        
		\draw[rotate=-45] (-0.95,0.1)--(-0.95,-0.1);
		\draw[rotate=-45] (-0.85,0)--(-1.05,0);
		
		\draw[dotted] (-0.62,0.54)--(-0.62,-0.55);
		\draw[dotted] (-0.72,0.54)--(-0.72,-0.55);

		\draw (0,0) circle (0.1);
		\draw[fill=black] (0,0) circle (0.033);
		
		\node [left] at (-0.7,0.67) {\Tiny $0$};
		\node [left] at (-0.7,-0.67) {\Tiny $1$};
		\node [below] at (0,-0.05) {\Tiny $2$};
	\end{tikzpicture}
	\caption{General Dynkin diagram of shape $DB_2$}
	\label{DB2}
\end{figure}

The Dynkin diagram of the superalgebra $\ghs=\osp(2m+1,2n)^{(1)}_\s$ has shape $(CB_N)_1$ if $s_1=-1$, and $(DB_N)_0$ if $s_1=1$. \footnote{The Dynkin diagrams of shape $(CB_N)_0$ and $(DB_N)_1$ are diagrams of the twisted affine superalgebra  $A(2m-1,2n)^{(2)}$, see \cite{Yamane1999}.} Note that, if $m=0$, $\cS$ is a singleton and the Dynkin diagram always has shape $(CB_N)_1$. In particular, if $m=0$ and $n=1$, the Dynkin diagram is the following:
\begin{figure}[H]
	\centering
	\begin{tikzpicture}[scale=1.5]
		\draw (0,0) circle (0.1);
		
		\draw[shift={(0.87,0)}, rotate=45] (-0.15,0)--(0,0);
		\draw[shift={(0.87,0)}, rotate=-45] (-0.15,0)--(0,0);

		\draw (0.12,0.02)--(0.85,0.02);
		\draw (0.12,0.06)--(0.81,0.06);
		\draw (0.12,-0.02)--(0.85,-0.02);
		\draw (0.12,-0.06)--(0.81,-0.06);

		\draw[fill=black] (1,0) circle (0.1);
		 
		\node [below] at (0,-0.05) {\Tiny $0$};
		\node [below] at (1,-0.05) {\Tiny $1$};
	\end{tikzpicture}
	\caption{Dynkin diagram of shape $CB_1$}
	\label{CB1}
\end{figure}

The Cartan matrix of $\ghs$ is obtained from the parity sequence $\s$ by the following formulas:

\begin{align*}
	         & A_{0,0}=\begin{cases}                                                                                                                   
	s_1+s_2, & s_1=1;                                                                                                                                     \\
	4s_1,    & s_1=-1.                                                                                                                                    
	\end{cases}\\
	         & A_{0,1}=A_{1,0}=\begin{cases}                                                                                                           
	s_2-s_1, & s_1=1;                                                                                                                                     \\
	-2s_1,   & s_1=-1.                                                                                                                                    
	\end{cases}\\
	         & A_{0,2}=A_{2,0}=\begin{cases}                                                                                                           
	-s_2,    & s_1=1;                                                                                                                                     \\
	0,       & s_1=-1.                                                                                                                                    
	\end{cases}\\
	         & A_{N,N}=-A_{N,N-1}=-A_{N-1,N}=s_N.                                                                                                \\
	         & A_{i,j}=(s_i+s_{i+1})\delta_{i,j}-s_{i}\delta_{i,j+1}-s_{j}\delta_{i+1,j},\; \text{for all } i,j \in \hat{I} \text{ not treated above}. 
\end{align*}

\subsubsection{Types $C$ and $D$ root systems}
The root system $\Delta_\s=\Delta_{0,\s} \cup \Delta_{1,\s}$ of $\g_\s$  is 
\begin{align*}
    &\Delta_\s=\{\pm \varepsilon_i \pm \varepsilon_j,\pm 2\varepsilon_k|1\leq i,j,k\leq N, s_k=-1\}\\
    &\Delta_{0,\s}=\{\pm \varepsilon_i \pm \varepsilon_j,\pm 2\varepsilon_k |1\leq i,j,k\leq N, s_i=s_j, s_k=-1\}\\
    &\Delta_{1,\s}=\{\pm \varepsilon_i \pm \varepsilon_j |1\leq i,j\leq N, s_i=-s_j\}
\end{align*}

The simple roots associated with $\s$ are given by
$\alpha_i=\varepsilon_{i}-\varepsilon_{i+1}$, $i=1,\dots,N-1$, and
\begin{align*}
   \alpha_N=\begin{cases}
                    \varepsilon_{N-1}+\varepsilon_{N}, & \text{ if } s_N=1;\\
                    2\varepsilon_{N}, & \text{ if } s_N=-1.
                \end{cases}
\end{align*}

The longest positive root $\theta$ of $\Delta_\s$ also depends on $\s$ and is given by
\begin{align*}
    \theta=\begin{cases}
        \varepsilon_{1}+\varepsilon_{2}, & \text{ if } s_1=1;\\
                    2\varepsilon_{1}, & \text{ if } s_1=-1.
    \end{cases}
\end{align*}

The parities of the simple roots $\alpha_i$, $i\in I$ are given by $|\alpha_i|=(1-s_i s_{i+1})/2$, $i\in I$. The null root $\delta$ is even for all $\s$. The parities of the simple roots $\alpha_0 = \delta - \theta$ and $\alpha_N$ are given by
\begin{align*}
    |\alpha_0|=\begin{cases} |\alpha_1|, & s_1=1;\\
    0, & s_1=-1.
\end{cases}
 &&|\alpha_N|=\begin{cases} |\alpha_{N-1}|, & s_N=1;\\
    0, & s_N=-1.
\end{cases}
\end{align*}

Now, since we have two possible expressions for the longest root and two possible expressions for the simple root $\alpha_N$, the general shape of the associated Dynkin diagram has four possibilities. 

We again follow the nomenclature of \cite{Yamane1999}, the Dynkin diagrams in Figure \ref{CCN}, Figure \ref{CDN}, Figure \ref{DCN}, and Figure \ref{DDN} have shape $CC_N$, $CD_N$, $DC_N$, and $DD_N$, respectively. Also, we will write $(CC_N)_j$, $(CD_N)_j$, $(DC_N)_j$, and $(DD_N)_j$, where $j\equiv \sum_{i=1}^{N-1}|\alpha_i|  \mod 2$, to specify the parity of the number of simple odd roots in the diagram.

\begin{figure}[H]
    \centering
    \begin{tikzpicture}[scale=1.5]
        \draw (-1,0) circle (0.1);
        \draw (-0.87,0.05)--(-0.15,0.05);	
		\draw (-0.87,-0.05)--(-0.15,-0.05);	
		\draw[shift={(-0.1,0)}, rotate=45] (-0.15,0)--(0,0);
			\draw[shift={(-0.1,0)}, rotate=-45] (-0.15,0)--(0,0);	
    
		\draw[rotate=45] (-0.1,0)--(0.1,0);
		\draw[rotate=-45] (-0.1,0)--(0.1,0);

	\draw (0.1,0)--(0.9,0);
	
	\draw[shift={(1,0)}, rotate=45] (-0.1,0)--(0.1,0);
			\draw[shift={(1,0)}, rotate=-45] (-0.1,0)--(0.1,0);
			
	\draw[dashed] (1.1,0)--(1.9,0);	
	
	\draw[shift={(2,0)}, rotate=45] (-0.1,0)--(0.1,0);
			\draw[shift={(2,0)}, rotate=-45] (-0.1,0)--(0.1,0);
			
	\draw (2.1,0)--(2.9,0);	
	
	\draw[shift={(3,0)}, rotate=45] (-0.1,0)--(0.1,0);
			\draw[shift={(3,0)}, rotate=-45] (-0.1,0)--(0.1,0);	
		
	\draw (3.15,0.05)--(3.87,0.05);	
		\draw (3.15,-0.05)--(3.87,-0.05);	
		\draw[shift={(3.1,0)}, rotate=45] (0.15,0)--(0,0);
			\draw[shift={(3.1,0)}, rotate=-45] (0.15,0)--(0,0);	
	

        \draw (4,0) circle (0.1);
        
        \node [below] at (-1,-0.05) {\Tiny $0$};
	\node [below] at (0,-0.05) {\Tiny $1$};
	\node [below] at (1,-0.05) {\Tiny $2$};
	\node [below] at (2,-0.05) {\Tiny $N-2$};
	\node [below] at (3,-0.05) {\Tiny $N-1$};
	\node [below] at (4,-0.05) {\Tiny $N$};
\end{tikzpicture}
    \caption{Dynkin diagram of shape $CC_N$}
    \label{CCN}
\end{figure}

\begin{figure}[H]
    \centering
    \begin{tikzpicture}[scale=1.5]
        \draw (-1,0) circle (0.1);
        \draw (-0.87,0.05)--(-0.15,0.05);	
		\draw (-0.87,-0.05)--(-0.15,-0.05);	
		\draw[shift={(-0.1,0)}, rotate=45] (-0.15,0)--(0,0);
			\draw[shift={(-0.1,0)}, rotate=-45] (-0.15,0)--(0,0);	
    
		\draw[rotate=45] (-0.1,0)--(0.1,0);
		\draw[rotate=-45] (-0.1,0)--(0.1,0);

	\draw (0.1,0)--(0.9,0);
	
	\draw[shift={(1,0)}, rotate=45] (-0.1,0)--(0.1,0);
			\draw[shift={(1,0)}, rotate=-45] (-0.1,0)--(0.1,0);
			
	\draw[dashed] (1.1,0)--(1.9,0);	
	
	\draw[shift={(2,0)}, rotate=45] (-0.1,0)--(0.1,0);
			\draw[shift={(2,0)}, rotate=-45] (-0.1,0)--(0.1,0);
			
	\draw (2.1,0)--(2.9,0);	
	
	\draw[shift={(3,0)}, rotate=45] (-0.1,0)--(0.1,0);
			\draw[shift={(3,0)}, rotate=-45] (-0.1,0)--(0.1,0);


        \draw[shift={(3,0)}, rotate=135] (-0.8,0)--(-0.15,0);
        \draw[shift={(3,0)}, rotate=-45] (0,0.15)--(0,0.8);

        \draw[shift={(3,0)}, rotate=135] (-0.85,0)--(-1.05,0);
        \draw[shift={(3,0)}, rotate=135] (-0.95,0.1)--(-0.95,-0.1);
        
        \draw[shift={(3,0)}, rotate=-135] (-0.95,0.1)--(-0.95,-0.1);
	\draw[shift={(3,0)}, rotate=-135] (-0.85,0)--(-1.05,0);

        \draw[dotted] (3.62,0.55)--(3.62,-0.55);
        \draw[dotted] (3.72,0.55)--(3.72,-0.55);
        
        \node [below] at (-1,-0.1) {\Tiny $0$};
	\node [below] at (0,-0.1) {\Tiny $1$};
	\node [below] at (1,-0.1) {\Tiny $2$};
	\node [below] at (2,-0.1) {\Tiny $N-3$};
	\node [below] at (2.9,-0.1) {\Tiny $N-2$};
	\node [right] at (3.7,0.67) {\Tiny $N-1$};
        \node [right] at (3.7,-0.67) {\Tiny $N$};
\end{tikzpicture}
    \caption{Dynkin diagram of shape $CD_N$}
    \label{CDN}
\end{figure}

\begin{figure}[H]
    \centering
    \begin{tikzpicture}[scale=1.5]

 \draw[rotate=45] (-0.8,0)--(-0.15,0);
        \draw[rotate=45] (0,0.15)--(0,0.8);

        \draw[rotate=45] (-0.85,0)--(-1.05,0);
        \draw[rotate=45] (-0.95,0.1)--(-0.95,-0.1);
        
        \draw[rotate=-45] (-0.95,0.1)--(-0.95,-0.1);
	\draw[rotate=-45] (-0.85,0)--(-1.05,0);

        \draw[dotted] (-0.62,0.55)--(-0.62,-0.55);
        \draw[dotted] (-0.72,0.55)--(-0.72,-0.55);

		\draw[rotate=45] (-0.1,0)--(0.1,0);
		\draw[rotate=-45] (-0.1,0)--(0.1,0);

	\draw (0.1,0)--(0.9,0);
	
	\draw[shift={(1,0)}, rotate=45] (-0.1,0)--(0.1,0);
			\draw[shift={(1,0)}, rotate=-45] (-0.1,0)--(0.1,0);
			
	\draw[dashed] (1.1,0)--(1.9,0);	
	
	\draw[shift={(2,0)}, rotate=45] (-0.1,0)--(0.1,0);
			\draw[shift={(2,0)}, rotate=-45] (-0.1,0)--(0.1,0);
			
	\draw (2.1,0)--(2.9,0);	
	
	\draw[shift={(3,0)}, rotate=45] (-0.1,0)--(0.1,0);
			\draw[shift={(3,0)}, rotate=-45] (-0.1,0)--(0.1,0);	
		
	\draw (3.15,0.05)--(3.87,0.05);	
		\draw (3.15,-0.05)--(3.87,-0.05);	
		\draw[shift={(3.1,0)}, rotate=45] (0.15,0)--(0,0);
			\draw[shift={(3.1,0)}, rotate=-45] (0.15,0)--(0,0);	
	

        \draw (4,0) circle (0.1);
        
        \node [left] at (-0.7,0.67) {\Tiny $0$};
        \node [left] at (-0.7,-0.67) {\Tiny $1$};
	\node [below] at (0,-0.05) {\Tiny $1$};
	\node [below] at (1,-0.05) {\Tiny $2$};
	\node [below] at (2,-0.05) {\Tiny $N-2$};
	\node [below] at (3,-0.05) {\Tiny $N-1$};
	\node [below] at (4,-0.05) {\Tiny $N$};
\end{tikzpicture}
    \caption{Dynkin diagram of shape $DC_N$}
    \label{DCN}
\end{figure}

\begin{figure}[H]
    \centering
    \begin{tikzpicture}[scale=1.5]
\draw[rotate=45] (-0.8,0)--(-0.15,0);
        \draw[rotate=45] (0,0.15)--(0,0.8);

        \draw[rotate=45] (-0.85,0)--(-1.05,0);
        \draw[rotate=45] (-0.95,0.1)--(-0.95,-0.1);
        
        \draw[rotate=-45] (-0.95,0.1)--(-0.95,-0.1);
	\draw[rotate=-45] (-0.85,0)--(-1.05,0);

        \draw[dotted] (-0.62,0.55)--(-0.62,-0.55);
        \draw[dotted] (-0.72,0.55)--(-0.72,-0.55);

		\draw[rotate=45] (-0.1,0)--(0.1,0);
		\draw[rotate=-45] (-0.1,0)--(0.1,0);

	\draw (0.1,0)--(0.9,0);
	
	\draw[shift={(1,0)}, rotate=45] (-0.1,0)--(0.1,0);
			\draw[shift={(1,0)}, rotate=-45] (-0.1,0)--(0.1,0);
			
	\draw[dashed] (1.1,0)--(1.9,0);	
	
	\draw[shift={(2,0)}, rotate=45] (-0.1,0)--(0.1,0);
			\draw[shift={(2,0)}, rotate=-45] (-0.1,0)--(0.1,0);
			
	\draw (2.1,0)--(2.9,0);	
	
	\draw[shift={(3,0)}, rotate=45] (-0.1,0)--(0.1,0);
			\draw[shift={(3,0)}, rotate=-45] (-0.1,0)--(0.1,0);


        \draw[shift={(3,0)}, rotate=135] (-0.8,0)--(-0.15,0);
        \draw[shift={(3,0)}, rotate=-45] (0,0.15)--(0,0.8);

        \draw[shift={(3,0)}, rotate=135] (-0.85,0)--(-1.05,0);
        \draw[shift={(3,0)}, rotate=135] (-0.95,0.1)--(-0.95,-0.1);
        
        \draw[shift={(3,0)}, rotate=-135] (-0.95,0.1)--(-0.95,-0.1);
	\draw[shift={(3,0)}, rotate=-135] (-0.85,0)--(-1.05,0);

        \draw[dotted] (3.62,0.55)--(3.62,-0.55);
        \draw[dotted] (3.72,0.55)--(3.72,-0.55);
        
        \node [left] at (-0.7,0.67) {\Tiny $0$};
        \node [left] at (-0.7,-0.67) {\Tiny $1$};
	\node [below] at (0,-0.1) {\Tiny $2$};
	\node [below] at (1,-0.1) {\Tiny $3$};
	\node [below] at (2,-0.1) {\Tiny $N-3$};
	\node [below] at (2.9,-0.1) {\Tiny $N-2$};
	\node [right] at (3.7,0.67) {\Tiny $N-1$};
        \node [right] at (3.7,-0.67) {\Tiny $N$};
\end{tikzpicture}
    \caption{Dynkin diagram of shape $DD_N$}
    \label{DDN}
\end{figure}

The Dynkin diagram of the superalgebra $\osp(2m,2n)^{(1)}$ has shape $(CC_N)_0$, if $s_1=s_N=-1$, shape $(CD_N)_1$, if $s_1=-s_N=-1$, shape $(DC_N)_1$, if $s_1=-s_N=1$, and shape $(DD_N)_0$, if $s_1=s_N=1$.\footnote{The Dynkin diagrams of shapes $(CC_N)_1$, $(CD_N)_0$, $(DC_N)_0$, and $(DD_N)_1$ are diagrams of the twisted affine superalgebra  $A(2m-1,2n-1)^{(2)}$, see \cite{Yamane1999}.}

The Cartan matrix of $\osp(2m,2n)^{(1)}$ is obtained from the parity sequence $\s$ by the following formulas:

\begin{align*}
    & A_{0,0}=\begin{cases}
        s_1+s_2,& s_1=1;\\
        4s_1, & s_1=-1.
    \end{cases}
                && A_{N,N}=\begin{cases}
        s_{N-1}+s_N,& s_N=1;\\
        4s_N, & s_N=-1.
    \end{cases}\\
    & A_{0,1}=A_{1,0}=\begin{cases}
        s_2-s_1,& s_1=1;\\
        -2s_1, & s_1=-1.
    \end{cases}
                && A_{{N-1},N}=A_{N,{N-1}}=\begin{cases}
        s_{N-1}-s_N,& s_N=1;\\
        -2s_N, & s_N=-1.
    \end{cases}\\
    & A_{0,2}=A_{2,0}=\begin{cases}
        -s_2,& s_1=1;\\
        0, & s_1=-1.
    \end{cases}
                && A_{{N-2},N}=A_{N,{N-2}}=\begin{cases}
        -s_{N-1},& s_N=1;\\
        0, & s_N=-1.
    \end{cases}
\end{align*}
$A_{i,j}=(s_i+s_{i+1})\delta_{i,j}-s_{i}\delta_{i,j+1}-s_{j}\delta_{i+1,j},\; \text{for all } i,j \in \hat{I} \text{ not treated above}.$

\subsection{The Affine Weyl Groupoid }

Recall that the Weyl group of $\g$ is defined as the Weyl group of its even subalgebra $\g_0 \subseteq \g$, and it is generated by the reflections associated with even roots. However, this group does not contain enough symmetries of $\g$.

The \emph{odd reflections}, first considered in \cite{leites1985embeddings}, mimic the reflection associated with even roots in a type $A$ region of the Dynkin diagram. That is, the odd reflection associated with the root $\alpha$ acts as follows:
\begin{align*}
	  & \alpha \mapsto -\alpha,                                                               \\
	  & \beta \mapsto \alpha +\beta, \; \text{if } (\alpha|\beta)\neq 0, \alpha\neq \pm \beta \\
	  & \beta \mapsto \beta,\; \text{if } (\alpha|\beta)= 0,\alpha\neq \pm \beta.             
\end{align*}

Even though the odd reflections provide additional symmetries, they do not preserve the Cartan data. The reflection associated with an odd root maps a set of positive roots to another set of positive roots (or, in the nomenclature of \cite{chengwang}, positive systems to positive systems).

Moreover, the original odd reflections do not behave well at the extremes of some Dynkin diagrams. This was a reason for the introduction of \emph{generalized reflections} by Yamane in \cite{Yamane1999}. 
We describe the (action) groupoid generated by the generalized reflections below.
First, consider the ``virtual root'' $\alpha_{i,\s}^{\dagger}$ corresponding to the simple root $\alpha_{i}$ defined as follows:
\begin{align*}
    \alpha_i^{\dagger}:=\begin{cases} 
                            \delta-2\varepsilon_1, & \text{if } i=0;\\
                            2\varepsilon_N, & \text{if $i=N$ and $\g=\osp(2m,2n)^{(1)}$};\\
                            \alpha_i, & \text{otherwise}.
                        \end{cases}
\end{align*}

Now, define a linear map $\iota_\s:P_\s\rightarrow P_{\bs 1} $ given by $\varepsilon_{i}\mapsto \varepsilon_{i}$ and $\delta\mapsto \delta$, where $\bs{1}$ is the parity sequence with all entries equal to $1$.

\begin{prop}[{\cite[Prop. 2.4.3]{Yamane1999}}] For each $i\in \hat I$, there is an even isometry $r_{i,\s}: P_\s\rightarrow P_{\sigma_i\cdot \s}$ given by
	\begin{align}
		r_{i,\s}(\alpha)=(\iota_{\sigma_i\cdot \s})^{-1}\left( \iota_\s(\alpha)-2\frac{(\iota_\s(\alpha)|\iota_\s(\alpha_{i}^{\dagger}))}{(\iota_\s(\alpha_{i}^{\dagger})|\iota_\s(\alpha_{i}^{\dagger}))}\iota_\s(\alpha_{i}^{\dagger}) \right). 
	\end{align}
	
\end{prop}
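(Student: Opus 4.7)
The plan is to verify directly the three assertions contained in the proposition --- well-definedness of the formula, the isometry property, and evenness --- by computing $r_{i,\s}$ explicitly on the basis $\{\delta_\s, \varepsilon_{1,\s}, \ldots, \varepsilon_{N,\s}\}$ of $P_\s$.

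First, I would evaluate the denominator $(\iota_\s(\alpha_{i,\s}^{\dagger}) | \iota_\s(\alpha_{i,\s}^{\dagger}))_{\bs 1}$. Since on $P_{\bs 1}$ one has $(\varepsilon_j | \varepsilon_k)_{\bs 1} = \delta_{jk}$ with $\delta$ isotropic and orthogonal to the $\varepsilon_j$'s, a direct computation yields $4$ when $i = 0$, $1$ when $i = N$, and $2$ for inner $i \in \{1, \dots, N-1\}$. The denominator never vanishes, so the formula defines a $\C$-linear map $r_{i,\s} : P_\s \to P_{\sigma_i \cdot \s}$.

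Next, I would substitute the basis vectors into the formula. A short calculation yields the following. For $1 \leq i \leq N-1$, the map $r_{i,\s}$ exchanges $\varepsilon_{i,\s}$ with $\varepsilon_{i+1,\sigma_i \cdot \s}$ and fixes $\delta_\s$ and the remaining basis vectors; for $i = N$, it sends $\varepsilon_{N,\s} \mapsto -\varepsilon_{N,\sigma_N \cdot \s}$ and fixes the others; for $i = 0$, it sends $\varepsilon_{1,\s} \mapsto \delta_{\sigma_0 \cdot \s} - \varepsilon_{1,\sigma_0 \cdot \s}$ while fixing $\delta_\s$ and $\varepsilon_{j,\s}$ for $j \geq 2$. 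Combining this with the prescribed action of $\sigma_i$ on parity sequences --- which either fixes $\s$ or transposes the $i$-th and $(i+1)$-st entries, according to the type of $\alpha_{i,\s}$ --- the isometry identity $(r_{i,\s}(x) | r_{i,\s}(y))_{\sigma_i \cdot \s} = (x | y)_\s$ reduces on each pair of basis vectors to the elementary equality $s_j = (\sigma_i \cdot \s)_{\pi(j)}$ under the relevant index permutation $\pi$. Evenness follows from the same observation, since $\pi$ is consistent with the $\Z_2$-grading on $P_\s$ declaring $\varepsilon_{j,\s}$ to have parity $(1 - s_j)/2$ and $\delta_\s$ to be even.

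The main subtlety is the node $i = 0$ when $s_1 = 1$, where the virtual root $\alpha_{0,\s}^{\dagger} = \delta_\s - 2\varepsilon_{1,\s}$ differs from the actual simple root $\alpha_{0,\s} = \delta_\s - \varepsilon_{1,\s} - \varepsilon_{2,\s}$, so $r_{0,\s}$ is \emph{not} the ordinary orthogonal reflection about $\alpha_{0,\s}$. One must verify that, with the correct prescription of $\sigma_0 \cdot \s$, the map still preserves the bilinear form; it is precisely this care that distinguishes Yamane's generalized reflection from a naive root reflection and constitutes the main technical content of the proposition.
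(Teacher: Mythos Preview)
The paper does not give its own proof of this proposition: it is imported verbatim from \cite[Prop.~2.4.3]{Yamane1999} and stated without argument. Your direct verification by computing $r_{i,\s}$ on the basis $\{\delta_\s,\varepsilon_{1,\s},\dots,\varepsilon_{N,\s}\}$ is therefore not a comparison against anything in the present paper, but it is a correct and self-contained proof. The denominator computation, the explicit action on basis vectors in the three cases $i=0$, $1\le i\le N-1$, $i=N$, and the reduction of the isometry property to the identity $s_j=(\sigma_i\cdot\s)_{\pi(j)}$ are all accurate.

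Two small points of care. First, your phrase ``according to the type of $\alpha_{i,\s}$'' for when $\sigma_i$ fixes $\s$ is slightly ambiguous: in the paper the permutations $\sigma_i$ are only defined for $1\le i\le N-1$, and for $i=0,N$ one must read $\sigma_i\cdot\s=\s$ (consistently with the homomorphism $\pi$ in \eqref{pi}); this is independent of whether $\alpha_{0,\s}$ or $\alpha_{N,\s}$ is even or odd. Second, the $\Z_2$-grading you put on $P_\s$ by declaring $|\varepsilon_{j,\s}|=(1-s_j)/2$ is not spelled out in the paper, but it is exactly the grading that makes the parity of a root $\alpha$ agree with the parity of the root space $\g_\alpha$, so your evenness argument goes through. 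Your remark about $i=0$ with $s_1=1$ being the genuinely non-obvious case is well taken and matches the spirit of the example the paper gives immediately after the proposition.
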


\begin{exa}
Consider $\g$ of type $B$ with $N>2$.
Let us analyze the action of the generalized reflections $r_{0,\s}$ and $r_{1,\s}$ on the roots $\alpha_{0}$, $\alpha_{1}$. Their parities depend on $s_1$ and $s_2$. Thus, we have four possibilities. 

In all cases, $r_{0,\s}(\alpha_{1})=\delta_{ \s} -\varepsilon_{1}-\varepsilon_{2}$. However, since the expression of $\alpha_0$ depends on $s_1$, we have:
\begin{align*}
	r_{0,\s}(\alpha_{1})=\delta -\varepsilon_{1}-\varepsilon_{2}=\begin{cases} 
	\alpha_{0},                 & \text{if } s_1=1;  \\
	\alpha_{0} + \alpha_{1}, & \text{if } s_1=-1. 
	\end{cases}
\end{align*}
Note that the action of $r_{0,\s}$ preserves the parity.

The case of $r_{1,\s}(\alpha_{0})$ is slightly more intricate:
\begin{align*}
	r_{1,\s}(\alpha_{0})                                                                       & =\begin{cases}     
	\delta -\varepsilon_{1}-\varepsilon_{2}, & \text{if } s_1=1;  \\
	\delta -2\varepsilon_{2} ,                                & \text{if } s_1=-1. 
	\end{cases}
       =\begin{cases}    
      \alpha_{0},                                      & \text{if } s_1=s_2=1;  \\
	\alpha_{0}+\alpha_{1},                                      & \text{if } s_1\neq s_2;  \\
	\alpha_{0} + 2\alpha_{1},                                   & \text{if } s_1=s_2=-1. 
	\end{cases}
\end{align*}

The following picture illustrates how the generalized reflections change the parities and the respective Dynkin diagrams. 

\begin{figure}[H]
	\centering
	\begin{tikzpicture}[scale=1.5]
		
		\begin{scope}
			\draw[rotate=45] (-0.1,0)--(0.1,0);
			\draw[rotate=-45] (-0.1,0)--(0.1,0);
			\draw (0,0) circle (0.1);
					
			\draw[rotate=45] (-0.85,0)--(-0.05,0);
			\draw[rotate=45] (0,0.05)--(0,0.85);
			\draw (-0.67, 0.67) circle (0.1);

			\draw (-0.67, -0.67) circle (0.1);
			        
			\draw (1,0) circle (0.1);
			 
			\draw (0.1,0)--(0.9,0);

			\draw (1.1,0.05)--(1.8,0.05);	
			\draw (1.1,-0.05)--(1.8,-0.05);	
			\draw[shift={(1.86,0)}, rotate=45] (-0.15,0)--(0,0);
			\draw[shift={(1.86,0)}, rotate=-45] (-0.15,0)--(0,0);	
				
			
			\draw (2,0) circle (0.1);
			\draw[fill=black] (2,0) circle (0.1);
			
			\node [left] at (-0.7,0.67) {\Tiny $0$};
			\node [left] at (-0.7,-0.67) {\Tiny $1$};
			\node [below] at (0,-0.05) {\Tiny $2$};
			\node [below] at (1,-0.05) {\Tiny $3$};
			\node [below] at (2,-0.05) {\Tiny $4$};
		\end{scope}
		
		----------------------------------
		\begin{scope}[shift={(0,-2.5)}]
			\draw[rotate=45] (-0.1,0)--(0.1,0);
			\draw[rotate=-45] (-0.1,0)--(0.1,0);
			\draw (0,0) circle (0.1);
					
			\draw[rotate=45] (-0.85,0)--(-0.05,0);
			\draw[rotate=45] (0,0.05)--(0,0.85);
			\draw (-0.67, 0.67) circle (0.1);
			
			\draw[rotate=45] (-0.85,0)--(-1.05,0);
			\draw[rotate=45] (-0.95,0.1)--(-0.95,-0.1);
			\draw (-0.67, -0.67) circle (0.1);
			        
			\draw[rotate=-45] (-0.95,0.1)--(-0.95,-0.1);
			\draw[rotate=-45] (-0.85,0)--(-1.05,0);
			\draw (1,0) circle (0.1);
			
			\draw (-0.62,0.58)--(-0.62,-0.58);
			\draw (-0.72,0.58)--(-0.72,-0.58);
			 
			\draw (0.1,0)--(0.9,0);
				
			\draw[shift={(1,0)}, rotate=45] (-0.1,0)--(0.1,0);
			\draw[shift={(1,0)}, rotate=-45] (-0.1,0)--(0.1,0);

			\draw (1.1,0.05)--(1.8,0.05);	
			\draw (1.1,-0.05)--(1.8,-0.05);	
			\draw[shift={(1.86,0)}, rotate=45] (-0.15,0)--(0,0);
			\draw[shift={(1.86,0)}, rotate=-45] (-0.15,0)--(0,0);	
				
			
			\draw (2,0) circle (0.1);
			\draw[fill=black] (2,0) circle (0.1);
			
			\node [left] at (-0.7,0.67) {\Tiny $0$};
			\node [left] at (-0.7,-0.67) {\Tiny $1$};
			\node [below] at (0,-0.05) {\Tiny $2$};
			\node [below] at (1,-0.05) {\Tiny $3$};
			\node [below] at (2,-0.05) {\Tiny $4$};
		\end{scope}

		----------------------------------
		\begin{scope}[shift={(0,-5)}]
			
			\draw (0,0) circle (0.1);
					
			\draw[rotate=45] (-0.85,0)--(-0.1,0);
			\draw[rotate=45] (0,0.1)--(0,0.85);
			\draw (-0.67, 0.67) circle (0.1);
			
			\draw[rotate=45] (-0.85,0)--(-1.05,0);
			\draw[rotate=45] (-0.95,0.1)--(-0.95,-0.1);
			\draw (-0.67, -0.67) circle (0.1);
			        
			\draw[rotate=-45] (-0.95,0.1)--(-0.95,-0.1);
			\draw[rotate=-45] (-0.85,0)--(-1.05,0);
			\draw (1,0) circle (0.1);
			
			\draw (-0.62,0.58)--(-0.62,-0.58);
			\draw (-0.72,0.58)--(-0.72,-0.58);
			 
			\draw (0.1,0)--(0.9,0);
				
			\draw[shift={(1,0)}, rotate=45] (-0.1,0)--(0.1,0);
			\draw[shift={(1,0)}, rotate=-45] (-0.1,0)--(0.1,0);

			\draw (1.1,0.05)--(1.8,0.05);	
			\draw (1.1,-0.05)--(1.8,-0.05);	
			\draw[shift={(1.86,0)}, rotate=45] (-0.15,0)--(0,0);
			\draw[shift={(1.86,0)}, rotate=-45] (-0.15,0)--(0,0);	
				
			
			\draw (2,0) circle (0.1);

			\node [left] at (-0.7,0.67) {\Tiny $0$};
			\node [left] at (-0.7,-0.67) {\Tiny $1$};
			\node [below] at (0,-0.05) {\Tiny $2$};
			\node [below] at (1,-0.05) {\Tiny $3$};
			\node [below] at (2,-0.05) {\Tiny $4$};
		\end{scope}
		----------------------------------

		\begin{scope}[shift={(5,0)}]
			\draw  (-1,0) circle (0.1);
			        
			\draw  (-0.9,0.05)--(-0.2,0.05);	
			\draw  (-0.9,-0.05)--(-0.2,-0.05);	
			 
			\draw[shift={(-0.14,0)}, rotate=45] (-0.15,0)--(0,0);
			\draw[shift={(-0.14,0)}, rotate=-45] (-0.15,0)--(0,0);
			
			\draw[ rotate=45] (-0.1,0)--(0.1,0);
			\draw[ rotate=-45] (-0.1,0)--(0.1,0);
			  
			\draw  (0,0) circle (0.1);

			\draw  (1,0) circle (0.1);
			 
			\draw  (0.1,0)--(0.9,0);

			\draw[shift={(0,0)}, ] (1.1,0)--(1.9,0);

			\draw  (2.1,0.05)--(2.8,0.05);	
			\draw  (2.1,-0.05)--(2.8,-0.05);	
			\draw[shift={(2.86,0)}, rotate=45] (-0.15,0)--(0,0);
			\draw[shift={(2.86,0)}, rotate=-45] (-0.15,0)--(0,0);	
			\draw[shift={(1,0)}] (1,0) circle (0.1);
				
			
			\draw[shift={(1,0)}] (2,0) circle (0.1);
			\draw[shift={(1,0)}, fill=black] (2,0) circle (0.1);
			
			\draw[shift={(2,0)}, rotate=45] (-0.1,0)--(0.1,0);
			\draw[shift={(2,0)}, rotate=-45] (-0.1,0)--(0.1,0);

			\node [shift={(0,0)}, below] at (-1,-0.05) {\Tiny $0$};
			\node [shift={(0,0)}, below] at (0,-0.05) {\Tiny $1$};
			\node [shift={(0,0)}, below] at (1,-0.05) {\Tiny $2$};
			\node [shift={(0,0)}, below] at (2,-0.05) {\Tiny $3$};
			\node [shift={(0,0)}, below] at (3,-0.05) {\Tiny $4$};
		\end{scope}
		
		----------------------------------

		\begin{scope}[shift={(5,-2.5)}]
			\draw  (-1,0) circle (0.1);
			        
			\draw  (-0.9,0.05)--(-0.2,0.05);	
			\draw  (-0.9,-0.05)--(-0.2,-0.05);	
			 
			\draw[shift={(-0.14,0)}, rotate=45] (-0.15,0)--(0,0);
			\draw[shift={(-0.14,0)}, rotate=-45] (-0.15,0)--(0,0);
			
			\draw[ rotate=45] (-0.1,0)--(0.1,0);
			\draw[ rotate=-45] (-0.1,0)--(0.1,0);
			  
			\draw  (0,0) circle (0.1);

			\draw  (1,0) circle (0.1);
			 
			\draw  (0.1,0)--(0.9,0);
				
			\draw[shift={(1,0)}, rotate=45] (-0.1,0)--(0.1,0);
			\draw[shift={(1,0)}, rotate=-45] (-0.1,0)--(0.1,0);
			
			\draw[shift={(0,0)}, ] (1.1,0)--(1.9,0);

			\draw  (2.1,0.05)--(2.8,0.05);	
			\draw  (2.1,-0.05)--(2.8,-0.05);	
			\draw[shift={(2.86,0)}, rotate=45] (-0.15,0)--(0,0);
			\draw[shift={(2.86,0)}, rotate=-45] (-0.15,0)--(0,0);	
			\draw[shift={(1,0)}] (1,0) circle (0.1);
				
			
			\draw[shift={(1,0)}] (2,0) circle (0.1);
			
			\draw[shift={(2,0)}, rotate=45] (-0.1,0)--(0.1,0);
			\draw[shift={(2,0)}, rotate=-45] (-0.1,0)--(0.1,0);

			\node [shift={(0,0)}, below] at (-1,-0.05) {\Tiny $0$};
			\node [shift={(0,0)}, below] at (0,-0.05) {\Tiny $1$};
			\node [shift={(0,0)}, below] at (1,-0.05) {\Tiny $2$};
			\node [shift={(0,0)}, below] at (2,-0.05) {\Tiny $3$};
			\node [shift={(0,0)}, below] at (3,-0.05) {\Tiny $4$};
		\end{scope}
		
		-----------------------------------------------
		
		\begin{scope}[shift={(5,-5)}]
			\draw  (-1,0) circle (0.1);
			        
			\draw  (-0.9,0.05)--(-0.2,0.05);	
			\draw  (-0.9,-0.05)--(-0.2,-0.05);	
			 
			\draw[shift={(-0.14,0)}, rotate=45] (-0.15,0)--(0,0);
			\draw[shift={(-0.14,0)}, rotate=-45] (-0.15,0)--(0,0);

			\draw  (0,0) circle (0.1);

			\draw  (1,0) circle (0.1);
			 
			\draw  (0.1,0)--(0.9,0);
				
			\draw[shift={(1,0)}, rotate=45] (-0.1,0)--(0.1,0);
			\draw[shift={(1,0)}, rotate=-45] (-0.1,0)--(0.1,0);
			
			\draw[shift={(0,0)}, ] (1.1,0)--(1.9,0);

			\draw  (2.1,0.05)--(2.8,0.05);	
			\draw  (2.1,-0.05)--(2.8,-0.05);	
			\draw[shift={(2.86,0)}, rotate=45] (-0.15,0)--(0,0);
			\draw[shift={(2.86,0)}, rotate=-45] (-0.15,0)--(0,0);	
			\draw[shift={(1,0)}] (1,0) circle (0.1);
				
			
			\draw[shift={(1,0)}] (2,0) circle (0.1);
			
			\node [shift={(0,0)}, below] at (-1,-0.05) {\Tiny $0$};
			\node [shift={(0,0)}, below] at (0,-0.05) {\Tiny $1$};
			\node [shift={(0,0)}, below] at (1,-0.05) {\Tiny $2$};
			\node [shift={(0,0)}, below] at (2,-0.05) {\Tiny $3$};
			\node [shift={(0,0)}, below] at (3,-0.05) {\Tiny $4$};
		\end{scope}

		\usetikzlibrary {arrows.meta}

		\begin{scope}
			\draw[{Classical TikZ Rightarrow[length=4]}-{Classical TikZ Rightarrow[length=4]}] (0.67,-0.75)--(0.67,-1.75);
		\end{scope}
		
		\begin{scope}[shift={(0,-2.5)}]
			\draw[{Classical TikZ Rightarrow[length=4]}-{Classical TikZ Rightarrow[length=4]}] (0.67,-0.75)--(0.67,-1.75);
		\end{scope}
		
		\begin{scope}[shift={(5.33,0)}]
			\draw[{Classical TikZ Rightarrow[length=4]}-{Classical TikZ Rightarrow[length=4]}] (0.67,-0.75)--(0.67,-1.75);
		\end{scope}
		
		\begin{scope}[shift={(5.33,-2.5)}]
			\draw[{Classical TikZ Rightarrow[length=4]}-{Classical TikZ Rightarrow[length=4]}] (0.67,-0.75)--(0.67,-1.75);
		\end{scope}
		
		\begin{scope}
			\draw[{Classical TikZ Rightarrow[length=4]}-{Classical TikZ Rightarrow[length=4]}] (2.5,-1.75)--(3.5,-.75);
		\end{scope}
		
		\begin{scope}[shift={(0,-2.5)}]
			\draw[{Classical TikZ Rightarrow[length=4]}-{Classical TikZ Rightarrow[length=4]}] (2.5,-1.75)--(3.5,-.75);
		\end{scope}
		
		\node [shift={(0,0)}, right] at (0.67,-1.25) {\small $r_2$};
		\node [shift={(0,0)}, right] at (0.67,-3.75) {\small $r_3$};
		
		\node [shift={(0,0)}, right] at (6,-1.25) {\small $r_3$};
		\node [shift={(0,0)}, right] at (6,-3.75) {\small $r_2$};
		
		\node [shift={(0,0)}, right] at (3,-1.4) {\small $r_1$};
		\node [shift={(0,0)}, right] at (3,-3.9) {\small $r_1$};
		
		\begin{scope}[shift={(0,0)}]
			\node [shift={(0,0)}, above] at (1,0.3) {\tiny $\s=(1,1,-1,-1)$};
		\end{scope}
		
		\begin{scope}[shift={(0,-2.5)}]
			\node [shift={(0,0)}, above] at (1,0.3) {\tiny $\s=(1,-1,1,-1)$};
		\end{scope}
		
		\begin{scope}[shift={(0,-5)}]
			\node [shift={(0,0)}, above] at (1,0.3) {\tiny $\s=(1,-1,-1,1)$};
		\end{scope}
		
		\begin{scope}[shift={(5,0)}]
			\node [shift={(0,0)}, above] at (1,0.3) {\tiny $\s=(-1,1,1,-1)$};
		\end{scope}
		
		\begin{scope}[shift={(5,-2.5)}]
			\node [shift={(0,0)}, above] at (1,0.3) {\tiny $\s=(-1,1,-1,1)$};
		\end{scope}
		
		\begin{scope}[shift={(5,-5)}]
			\node [shift={(0,0)}, above] at (1,0.3) {\tiny $\s=(-1,-1,1,1)$};
		\end{scope}
		
	\end{tikzpicture}
	\caption{All Dynkin diagrams of $\osp(5,4)^{(1)}$.}
	\label{orbit of diagrams}
\end{figure}

\end{exa}

\medskip

\section{The quantum affine orthosymplectic superalgebra }\label{sect-quantum-JD}

Given $r \in \Z$, define $[r]_q=\frac{q^{r}-q^{-r}}{q-q^{-1}}$.

\begin{dfn}[\cite{Yamane1999}]
	Let $\g$ be of type $B$, $C$ or $D$, and $\s \in \cS$. The quantum affine superalgebra $\Uq (\ghs)$ is the unital associative superalgebra generated by $E_i, F_i, K_i^{\pm 1}$, $i\in \hI$, subject to the relations below. The parity of generators is given by $|E_i|=|F_i|=|\alpha_{i,\s}|$ and $|K_i^{\pm 1}|=0$. 
		
	\noindent The defining relations are as follows.
	\begin{align}
		  & K_iK_j=K_jK_i.\label{DJ kk} \\ & K_iE_jK_i^{-1}=q^{A_{i,j}}E_j,\quad K_iF_jK_i^{-1}=q^{-A_{i,j}}F_j. \label{DJ KE}\\
		&[E_i,F_j]=\delta_{i,j}\frac{K_i-K_i^{-1}}{q-q^{-1}}. \label{DJ EF}\\
		& \text{If } A_{i,j}=0, \text{then } [E_i,E_j]=0.\label{EE rel}\\
		&\text{If } A_{i,i}\neq 0 \if \text{ and } |i|\frac{2A_{i,j}}{A_{i,i}} \in 2\Z (\luan{\text{only relevant in twisted case}})\fi, \text{then } \lb{E_i,\lb{E_i,\lb{\cdots,\lb{E_i,E_j}}}}=0, \label{Serre traditional} \text{ where } E_i \text{ appears } 1-\frac{2A_{i,j}}{A_{i,i}} \,\text{times}. \\
		&\text{If } A_{j,j}=0 \text{ and } A_{i,j}=-A_{j,k}\neq 0, \text{ i.e., if }   \label{Serre odd node}
		\begin{tikzpicture}[baseline=-3pt]
		\draw[rotate=45] (-0.1,0)--(0.1,0);
		\draw[rotate=-45] (-0.1,0)--(0.1,0);
		\draw (1,0) circle (0.1);
		\draw[shift={(1,0)},rotate=45] (-0.1,0)--(0.1,0);
		\draw[shift={(1,0)},rotate=-45] (-0.1,0)--(0.1,0);
		\draw (0.1,0)--(0.9,0);
		\draw (1.1,0)--(1.9,0);
		\draw[shift={(2,0)},rotate=45] (-0.1,0)--(0.1,0);
		\draw[shift={(2,0)},rotate=-45] (-0.1,0)--(0.1,0);
		\node [below] at (0,0) {\Tiny $i$};
		\node [below] at (1,0) {\Tiny $j$};
		\node [below] at (2,0) {\Tiny $k$};
		\end{tikzpicture},  \text{ or }  \begin{tikzpicture}[baseline=-3pt]
		\draw[rotate=45] (-0.1,0)--(0.1,0);
		\draw[rotate=-45] (-0.1,0)--(0.1,0);
		\draw (1,0) circle (0.1);
		\draw[shift={(1,0)},rotate=45] (-0.1,0)--(0.1,0);
		\draw[shift={(1,0)},rotate=-45] (-0.1,0)--(0.1,0);
		\draw (0.1,0)--(0.9,0);
		\draw (1.1,0.05)--(1.81,0.05);
		\draw (1.1,-0.05)--(1.81,-0.05);
		\draw[shift={(1.86,0)}, rotate=45] (-0.15,0)--(0,0);
		\draw[shift={(1.86,0)}, rotate=-45] (-0.15,0)--(0,0);
		\draw (2,0) circle (0.1);
		\draw[fill=black] (2,0) circle (0.033);
		\node [below] at (0,0) {\Tiny $i$};
		\node [below] at (1,0) {\Tiny $j$};
		\node [below] at (2,0) {\Tiny $k$};
		\end{tikzpicture}, {then } \\
		& \lb{\lb{\lb{E_i,E_j},E_k},E_j}=0. \notag\\
		& \label{Serre DJ Sdo}\text{If } \begin{tikzpicture}[baseline=-3pt]
		\draw (0,0) circle (0.1);
		\draw[rotate=45] (-0.1,0)--(0.1,0);
		\draw[rotate=-45] (-0.1,0)--(0.1,0);
		\draw (1,0) circle (0.1);
		\draw[shift={(1,0)},rotate=45] (-0.1,0)--(0.1,0);
		\draw[shift={(1,0)},rotate=-45] (-0.1,0)--(0.1,0);
		\draw (0.1,0)--(0.9,0);
		\draw (1.2,0.05)--(1.9,0.05);
		\draw (1.2,-0.05)--(1.9,-0.05);
		\draw[shift={(1.15,0)}, rotate=45] (0.15,0)--(0,0);
		\draw[shift={(1.15,0)}, rotate=-45] (0.15,0)--(0,0);
		\draw (2,0) circle (0.1);
		\node [below] at (0,0) {\Tiny $i$};
		\node [below] at (1,0) {\Tiny $j$};
		\node [below] at (2,0) {\Tiny $k$};
		\end{tikzpicture}, {then }\\
		& \lb{\lb{\lb{E_i,E_j},\lb{\lb{E_i,E_j},E_k}},E_j}=0.\notag \\
		& \label{Serre DJ Sde} \text{If } \begin{tikzpicture}[baseline=-3pt]
		\draw[shift={(-1,0)},rotate=45] (-0.1,0)--(0.1,0);
		\draw[shift={(-1,0)},rotate=-45] (-0.1,0)--(0.1,0);
		\draw (-0.1,0)--(-0.9,0);
		\draw (0,0) circle (0.1);
		\draw (1,0) circle (0.1);
		\draw[shift={(1,0)},rotate=45] (-0.1,0)--(0.1,0);
		\draw[shift={(1,0)},rotate=-45] (-0.1,0)--(0.1,0);
		\draw (0.1,0)--(0.9,0);
		\draw (1.2,0.05)--(1.9,0.05);
		\draw (1.2,-0.05)--(1.9,-0.05);
		\draw[shift={(1.15,0)}, rotate=45] (0.15,0)--(0,0);
		\draw[shift={(1.15,0)}, rotate=-45] (0.15,0)--(0,0);
		\draw (2,0) circle (0.1);
		\node [below] at (-1,0) {\Tiny $i$};
		\node [below] at (0,0) {\Tiny $j$};
		\node [below] at (1,0) {\Tiny $k$};
		\node [below] at (2,0) {\Tiny $l$};
		\end{tikzpicture}, \text{ or } \begin{tikzpicture}[baseline=-3pt]
		\draw (-1,0) circle (0.1);
		\draw[fill=black] (-1,0) circle (0.033);
		\draw[shift={(-0.85,0)}, rotate=45] (0.15,0)--(0,0);
		\draw[shift={(-0.85,0)}, rotate=-45] (0.15,0)--(0,0);
		\draw (-0.1,0.05)--(-0.8,0.05);
		\draw (-0.1,-0.05)--(-0.8,-0.05);
		\draw (0,0) circle (0.1);
		\draw (1,0) circle (0.1);
		\draw[shift={(1,0)},rotate=45] (-0.1,0)--(0.1,0);
		\draw[shift={(1,0)},rotate=-45] (-0.1,0)--(0.1,0);
		\draw (0.1,0)--(0.9,0);
		\draw (1.2,0.05)--(1.9,0.05);
		\draw (1.2,-0.05)--(1.9,-0.05);
		\draw[shift={(1.15,0)}, rotate=45] (0.15,0)--(0,0);
		\draw[shift={(1.15,0)}, rotate=-45] (0.15,0)--(0,0);
		\draw (2,0) circle (0.1);
		\node [below] at (-1,0) {\Tiny $i$};
		\node [below] at (0,0) {\Tiny $j$};
		\node [below] at (1,0) {\Tiny $k$};
		\node [below] at (2,0) {\Tiny $l$};
		\end{tikzpicture}, {then } \\
		& \lb{\lb{\lb{\lb{\lb{\lb{E_i,E_j},E_k},E_l},E_k},E_j},E_k}=0.\notag\\
		& \text{If } A_{i,j} A_{i,k} A_{j,k}\neq 0,\; A_{i,j} + A_{i,k} + A_{j,k}= 0,\; \text{and }\, |i||j|+|j||k|+|i||k|\equiv 1, \text{ then } \label{Serre triangle}\\
		&  (-1)^{|i||k|}[(\alpha_i|\alpha_k)]\lb{\lb{E_i,E_j},E_k}=(-1)^{|i||j|}[(\alpha_i|\alpha_j)]\lb{\lb{E_i,E_k},E_j}.\notag\\
		& \text{If } \begin{tikzpicture}[baseline=-3pt]
		\draw (-1,0) circle (0.1);
		\draw (-0.91,0.05)--(-0.20,0.05);	
		\draw (-0.91,-0.05)--(-0.20,-0.05);	
		\draw[shift={(-0.15,0)}, rotate=45] (-0.15,0)--(0,0);
		\draw[shift={(-0.15,0)}, rotate=-45] (-0.15,0)--(0,0);	
		\draw (0,0) circle (0.1);
		\draw (0.09,0.05)--(0.80,0.05);	
		\draw (0.09,-0.05)--(0.8,-0.05);	
		\draw[shift={(0.85,0)}, rotate=45] (-0.15,0)--(0,0);
		\draw[shift={(0.85,0)}, rotate=-45] (-0.15,0)--(0,0);
		\draw (1,0) circle (0.1);
		\draw[fill=black] (1,0) circle (0.033);
		\node [below] at (-1,0) {\Tiny $i$};
		\node [below] at (0,0) {\Tiny $j$};
		\node [below] at (1,0) {\Tiny $k$};
		\end{tikzpicture},\label{3 nodes relation}{then } \\
		& \lb{\lb{E_k,E_j},\lb{\lb{E_k,E_j},\lb{\lb{E_k,E_j},E_i}}}=(1-(-1)^{|k|}[2]) \lb{\lb{\lb{E_k,E_j},\lb{E_k,\lb{E_k,\lb{E_j,E_i}}}},E_j}.\notag \\
        & \text{If } \begin{tikzpicture}[baseline=-3pt]
		\draw (-1,0) circle (0.1);
		\draw (-0.91,0.05)--(-0.20,0.05);	
		\draw (-0.91,-0.05)--(-0.20,-0.05);	
		\draw[shift={(-0.15,0)}, rotate=45] (-0.15,0)--(0,0);
		\draw[shift={(-0.15,0)}, rotate=-45] (-0.15,0)--(0,0);	
		\draw (0,0) circle (0.1);
  \draw[rotate=45] (0.15,0)--(0.85,0);
        \draw[rotate=-45] (0.15,0)--(0.85,0);
         \draw (.67,0.67) circle (0.1);
        \draw (.67,-0.67) circle (0.1);
        \draw[shift={(0,0)},rotate=45] (-0.1,0)--(0.1,0);
		\draw[shift={(0,0)},rotate=-45] (-0.1,0)--(0.1,0);
        \node [right] at (0.7,0.67) {\Tiny $k$};
        \node [right] at (0.7,-0.67) {\Tiny $\ell$};
		\node [below] at (-1,-.05) {\Tiny $i$};
		\node [below] at (0,-.05) {\Tiny $j$};
		\end{tikzpicture},{then } \\
		& \lb{\lb{\lb{E_i,E_j},\lb{E_j,E_k}},\lb{E_j,E_\ell}}=\lb{\lb{\lb{E_i,E_j},\lb{E_j,E_\ell}},\lb{E_j,E_k}}.\notag \\
		& \text{The relations \eqref{EE rel}--\eqref{3 nodes relation} with $E$'s replaced by $F$'s.} \label{Serre F}
	\end{align}
\end{dfn}

Here, the bracket $[X,Y]$ is defined by the standard supersymmetric convention: for homogeneous elements $X$ and $Y$ with respect to the parity, it is given by
\begin{align*}
	[X,Y]=XY-(-1)^{|X||Y|} YX. 
\end{align*} 

\noindent If $a\in \Cx$, define 
\begin{align*}
	[X,Y]_a=XY-(-1)^{|X||Y|} aYX. 
\end{align*} 
The $q$-deformed bracket $\lb{\cdot,\cdot}$ is defined as follows.
Consider the root space decomposition of $\Uqghs$
\begin{align*}
	\Uqghs=\bigoplus_{\alpha \in \widehat \Delta_\s} (\Uqghs)_{\alpha}, 
\end{align*}
where $(\Uqghs)_{\alpha}:=\{X\in \Uqghs\, |\, K_iXK^{-1}_i=q^{(\alpha_i,\alpha)}X,\, i\in \hat I\}$. Then, if $X\in (\Uqghs)_{\alpha}$ and $Y\in (\Uqghs)_{\beta}$, define 
\begin{align*}
	\lb{X,Y}=[X,Y]_{q^{-(\alpha,\beta)}}=XY-(-1)^{|X||Y|} {q^{-(\alpha,\beta)}}YX. 
\end{align*}

The following identities hold: 
\begin{align}
	[[X,Y]_a,Z]_b= [X,[Y,Z]_c]_{abc^{-1}}+(-1)^{|Y||Z|}c[[X,Z]_{bc^{-1}},Y]_{ac^{-1}}, \label{q Jacobi 1} \\
	[X,[Y,Z]_a]_b= [[X,Y]_c,Z]_{abc^{-1}}+(-1)^{|X||Y|}c[Y,[X,Z]_{bc^{-1}}]_{ac^{-1}}. \label{q Jacobi 2} 
\end{align}

If $K_iX=q^{x}XK_i$, then
\begin{align}
	  & [X, K_i Y]=q^{-x}K_i [X,Y]_{q^{x}}, \label{K commutator 1} \\
	  & [K_i Y, X]=K_i [Y,X]_{q^{-x}}. \label{K commutator 2}      
\end{align}

\medskip

We have a central element $K_C$ defined as follows:

If $\g$ is of type $B$, then
\begin{align*}
	K_C := \begin{cases}
	K_0(K_1\cdots K_N)^2,    & \text{if } s_1 = -1; \\
	K_0K_1(K_2\cdots K_N)^2, & \text{if } s_1 = 1.  
	\end{cases}
\end{align*}

If $\g$ is of type $C$ or $D$, then
\begin{align*}
	K_C := \begin{cases}
	K_0(K_1\cdots K_{N-1})^2K_N,    & \text{if } s_1 = -1 \text{ and } s_N=-1; \\
    K_0K_1(K_2\cdots K_{N-1})^2K_N, & \text{if } s_1 = 1 \text{ and } s_N=-1;\\
	K_0(K_1\cdots K_{N-2})^2K_{N-1}K_N, & \text{if } s_1 = -1 \text{ and } s_N=1;\\
	 K_0K_1(K_2\cdots K_{N-2})^2K_{N-1}K_N, & \text{if } s_1 = 1 \text{ and } s_N=1.
	\end{cases}
\end{align*}

\if The superalgebra $\Uq (\ghs)$ has the following structures.

Let $\omega: U \rightarrow U$ be the automorphism of superalgebras defined by
\begin{align*}
	  & \omega(E_i)=F_i,\qquad \omega(F_i)=(-1)^{|i|}E_i,\qquad \omega(K_i)=K_i^{-1}  \quad & (i\in \hat{I}). 
\end{align*}

Let $\pi: U \rightarrow U$ be the anti-automorphism of superalgebras ($\pi(XY)=(-1)^{|X||Y|}\pi(Y)\pi(X)$) defined by
\begin{align*}
	  & \pi(E_i)=E_i,\qquad \pi(F_i)=F_i,\qquad \pi(K_i)=K_i^{-1}  \quad & (i\in \hat{I}). 
\end{align*} 
 
Let $\kappa: U \rightarrow U$ be the automorphism of $\C$-superalgebras defined by
\begin{align*}
	  & \kappa(E_i)=E_i,\qquad\kappa(F_i)=F_i,\qquad \kappa(K_i)=K_i^{-1}, \quad \kappa(q)=q^{-1}  \quad & (i\in \hat{I}). 
\end{align*}
\fi

\section{Formulas with diagrams}\label{sect-formulas}

In this section, we define an action of the affine braid group of type $B$ on the direct sum $\Uq (\gh_\bullet):=\bigoplus_{\s \in \cS} \Uq (\ghs)$ for $\g$ of types $B$, $C$ or $D$. Due to the definition of the ``virtual roots'' $\alpha_{i,\s}^{\dagger}$, the group of type B is used for all types of the algebra.

\subsection{Affine braid group action} \label{subsection braid group}

Let $\cB_N$ be the braid group associated with the Dynkin diagram
\begin{figure}[H]
	\centering
	\begin{tikzpicture}[scale=1.5]
		\draw (-1,0) circle (0.1);
		\draw (0,0) circle (0.1);
		\draw (1,0) circle (0.1);
		\draw (2,0) circle (0.1);
		\draw (3,0) circle (0.1);
		\draw (-0.9,0.05)--(-0.2,0.05);	
		\draw (-0.9,-0.05)--(-0.2,-0.05);	
		\draw[shift={(-0.15,0)}, rotate=45] (-0.15,0)--(0,0);
		\draw[shift={(-0.15,0)}, rotate=-45] (-0.15,0)--(0,0);
		 
		\draw (0.1,0)--(0.9,0);

		\draw[dashed] (1.1,0)--(1.9,0);

		\draw (2.1,0)--(2.9,0);

		\draw (3.1,0.05)--(3.8,0.05);	
		\draw (3.1,-0.05)--(3.8,-0.05);	
		\draw[shift={(3.86,0)}, rotate=45] (-0.15,0)--(0,0);
		\draw[shift={(3.86,0)}, rotate=-45] (-0.15,0)--(0,0);	
			
		
		\draw (4,0) circle (0.1);

		\node [below] at (-1,-0.05) {\Tiny $0$};
		\node [below] at (0,-0.05) {\Tiny $1$};
		\node [below] at (1,-0.05) {\Tiny $2$};
		\node [below] at (2,-0.05) {\Tiny $N-2$};
		\node [below] at (3,-0.05) {\Tiny $N-1$};
		\node [below] at (4,-0.05) {\Tiny $N$};
	\end{tikzpicture}
	\caption{Dynkin diagram of shape $CB_N$ with only even nodes}
	\label{CBN even}
\end{figure}

That is, the group $\cB_N$, $N>1$, is generated by the elements $T_0, T_1, \dots, T_{N-1}, T_N,$ subject to the relations:
\begin{align}
	  & T_{i}T_{i-1}T_{i}T_{i-1}=T_{i-1}T_{i}T_{i-1}T_{i}, &&\text{if } i=1,N, \label{braid relations ends}\\
	  & T_{i}T_{i-1}T_{i}=T_{i-1}T_{i}T_{i-1}, &&\text{if } i=2,\dots, N-1.    \label{braid relations middle}
\end{align}

\noindent Define the following elements of $\cB_N$ 
\begin{align*}
	  & T_{\omega_1}:=T_0 T_1 T_2 T_3\cdots T_N T_{N-1}\cdots T_2 T_1,                        \\
	  & T_{\omega_2}:=T_{1}^{-1}T_{\omega_1}T_{1}^{-1}T_{\omega_1},                           \\
	  & T_{\omega_{i+1}}:=T_i^{-1} T_{\omega_{i}}T_i^{-1}T_{\omega_{i}}T_{\omega_{i-1}}^{-1}, \quad i=2,\dots,N-1. 
\end{align*}
The elements  $T_{\omega_{i}}$, $i\in I$, satisfy the identities below, see \cite{Lusztig1989},
\begin{align*}
		&T_{\omega_i}T_{\omega_j}=T_{\omega_j}T_{\omega_i},                 \\
		& T_{\omega_{N-1}}^2 T_{\omega_N}^{-1}=T_N^{-1} T_{\omega_N}T_N^{-1} .
	\end{align*}

If $N=1$, by an abuse of notation, we denote by $\cB_1$ the free group generated by $T_0$ and $T_1$. In this case, we also define $ T_{\omega_1}:=T_0 T_1$.

The symmetric group $\mathfrak{S}_N$ acts naturally on $\cS$ by permuting indices, $\sigma  \s:=(s_{{\sigma^{-1}}(1)},\dots,s_{{\sigma^{-1}}(N)})$ for all $\sigma\in \mathfrak{S}$, $\s\in \cS$. Let $\sigma_i$ denote the transposition $(i,i+1)$, for all $i=1,\dots, N-1$.

We have a surjective group homomorphism $\pi: \cB_N\to \mathfrak{S}_N$ given by
\begin{align}\label{pi}
        \pi(T_i)=\begin{cases}
                    1, \text{ if } i=0,N;\\
                    \sigma_i, \text{ if } i=1,\dots, N-1.
                \end{cases}
 \end{align}
Note that the abelian subgroup generated by $T_{\omega_{i}}$, $i\in I$, is in the kernel of $\pi$.

In the following subsections, we define isomorphisms $T_{i,\s}:\Uqghs \rightarrow U_q(\gh_{\pi(T_i)\s})$, $i\in \hat{I}$, $\s \in \cS$. These formulas already appeared in \cite{Yamane1999} up to a scaling coefficient. With the coefficients written below, the isomorphisms \eqref{T0 DX even}--\eqref{TN-1 XB} define an action of $\cB_N$ on $\Uq (\gh_\bullet)$.

The formulas of the isomorphisms $T_{i,\s}$ are conveniently described in terms of the Dynkin diagrams corresponding to the parity sequences $\s$ and $\sigma_i\s$. 

\subsection{The operators \texorpdfstring{$T_{0,\s}$}{T0s} and \texorpdfstring{$T_{1,\s}$}{T1s}}

We define the operators $T_{0,\s}:\Uqghs \rightarrow \Uqghs$ and $T_{1,\s}:\Uqghs \rightarrow U_q(\gh_{\sigma_1\s})$.

\subsubsection{Case 1.}
Suppose that the $(0,2)$ section of the Dynkin diagram has the following shape
\begin{figure}[H]
	\centering
	\begin{tikzpicture}[scale=1.5]
		\draw (-.67,0.67) circle (0.1);
		\draw (-.67,-0.67) circle (0.1);
		\draw[rotate=45] (-0.85,0)--(-0.15,0);
		\draw[rotate=45] (0,0.15)--(0,0.85);
		
		        
		       
		\draw[rotate=45] (-0.1,0)--(0.1,0);
		\draw[rotate=-45] (-0.1,0)--(0.1,0);

		\node [left] at (-0.7,0.67) {\Tiny $0$};
		\node [left] at (-0.7,-0.67) {\Tiny $1$};
		\node [below] at (0,-0.05) {\Tiny $2$};
	\end{tikzpicture}
	\caption{$(0,2)$ section of the Dynkin diagrams $DB_N$, $DC_N$, and $DD_N$ if  $s_1=s_2=1$.}
	\label{DX even}
\end{figure}

Define $T_{0,\s}:\Uqghs \rightarrow \Uqghs$ by
\begin{equation}\label{T0 DX even}
	\begin{aligned}
		&  T_{0,\s}(E_0)=E_1,\hspace{1cm}
		  &   & T_{0,\s}(E_1)=E_0, \\
		&  T_{0,\s}(F_0)=F_1,
		  &   & T_{0,\s}(F_1)=F_0, \\
		&  T_{0,\s}(K_0)=K_1,
		  &   & T_{0,\s}(K_1)=K_0, 
	\end{aligned}
\end{equation}
and $T_{0,\s}$ acts trivially on the generators corresponding to all remaining nodes.

Note that the generalized reflection $r_{0,\s}$ is actually a diagram automorphism swapping the nodes $0$ and $1$. Even though the nodes $0$ and $2$ are linked, the operator $T_{0,\s}$ fixes all generators corresponding to the node $2$.

The operator $T_{1,\s}:\Uqghs \rightarrow U_q(\gh_{\sigma_1\s})$ fixes the generators corresponding to the nodes $0,3,\dots,N$, and the action on generators corresponding to the nodes $1$ and $2$ is given by the formulas of type $A$ \eqref{T_i type A 1} and \eqref{T_i type A 2}. Note that, since $s_1=s_2$, we have $\s=\sigma_1\s$.

\subsubsection{Case 2.}\label{s1=-s2=1}
Suppose that the $(0,2)$ section of the Dynkin diagram has the following shape
\begin{figure}[H]
	\centering
	\begin{tikzpicture}[scale=1.5]
		
		\draw[rotate=45] (-0.85,0)--(-0.15,0);
		\draw[rotate=45] (0,0.15)--(0,0.85);
		
		\draw[rotate=45] (-0.85,0)--(-1.05,0);
		\draw[rotate=45] (-0.95,0.1)--(-0.95,-0.1);
		        
		\draw[rotate=-45] (-0.95,0.1)--(-0.95,-0.1);
		\draw[rotate=-45] (-0.85,0)--(-1.05,0);
		
		\draw (-0.62,0.58)--(-0.62,-0.58);
		\draw (-0.72,0.58)--(-0.72,-0.58);
		
		\draw (-.67,0.67) circle (0.1);
		\draw (-.67,-0.67) circle (0.1);

		\draw[rotate=45] (-0.1,0)--(0.1,0);
		\draw[rotate=-45] (-0.1,0)--(0.1,0);

		\node [left] at (-0.7,0.67) {\Tiny $0$};
		\node [left] at (-0.7,-0.67) {\Tiny $1$};
		\node [below] at (0,-0.05) {\Tiny $2$};
	\end{tikzpicture}
	\caption{$(0,2)$ section of the Dynkin diagrams $DB_N$, $DC_N$, and $DD_N$ if  $s_1=-s_2=1$.}
	\label{DX odd}
\end{figure}


In this case, the operator $T_{0,\s}$ has the same formulas as \eqref{T0 DX even} and is trivial on all remaining nodes.

Let $\t=(t_1,\dots,t_N)=\sigma_1 \s$. Note that, since $s_1=-s_2$, the $(0,2)$ section of the diagram with parity $\t$ has the same shape as in Figure \ref{CX odd}.
The operator $T_{1,\s}:\Uqghs \rightarrow \Uqght$ acts on the generators corresponding to the node $0$ by
\begin{equation}\label{T1 DX odd}
	\begin{aligned}
		  & T_{1,\s}(E_0)=-t_1q^{-2t_1}\lb{E_0,E_1},       \\
		  & T_{1,\s}(F_0)=\dfrac{1}{q+q^{-1}}\lb{F_0,F_1}, \\
		  & T_{1,\s}(K_0^{\pm 1})=(K_0K_1)^{\pm 1}.        
	\end{aligned}
\end{equation}

The action of $T_{1,\s}$ on the generators corresponding to the nodes $1$ and $2$ is given by the formulas \eqref{T_i type A 1} and \eqref{T_i type A 2} of type $A$, and $T_{1,\s}$ fixes the generators corresponding to the nodes $3,\dots,N$.

\subsubsection{Case 3.}\label{s1=-s2=-1}
Suppose that the $(0,2)$ section of the Dynkin diagram has the following shape

\begin{figure}[H]
	\centering
	\begin{tikzpicture}[scale=1.5]

		\draw (-1.5,0) circle (0.1);
		\draw (-0.91-.5,0.05)--(-0.15-.5,0.05);	
		\draw (-0.91-.5,-0.05)--(-0.15-.5,-0.05);	
		\draw[shift={(-0.6,0)}, rotate=45] (-0.15,0)--(0,0);
		\draw[shift={(-0.6,0)}, rotate=-45] (-0.15,0)--(0,0);
		\draw (0.01-.5,0) circle (0.1);
		
		\draw[shift={(-0.5,0)},rotate=45] (-0.1,0)--(0.1,0);
		\draw[shift={(-0.5,0)},rotate=-45] (-0.1,0)--(0.1,0);

		\draw[dashed] (-.4,0)--(0.4,0);	
		
		\draw[shift={(0.5,0)},rotate=45] (-0.1,0)--(0.1,0);
		\draw[shift={(0.5,0)},rotate=-45] (-0.1,0)--(0.1,0);
		
		\draw[dashed] (.6,0)--(0.9,0);

		\node [below] at (-1-.5,-0.05) {\Tiny $0$};
		\node [below] at (0-.5,-0.05) {\Tiny $1$};
		\node [below] at (.5,-0.05) {\Tiny $2$};
	\end{tikzpicture}
	\caption{$(0,2)$ section of the Dynkin diagrams $CB_N$, $CC_N$, and $CD_N$ if  $s_1=-s_2=-1$.}
	\label{CX odd}
\end{figure}


In this case, define $T_{0,\s}:\Uqghs \rightarrow \Uqghs$ by
\begin{equation}\label{T0 CX odd}
	\begin{aligned}
		&  T_{0,\s}(E_0)=-s_1(q+q^{-1})F_0K_0,\hspace{1cm}
		  &   & T_{0,\s}(E_1)=\dfrac{s_1 q^{-2s_1}}{q+q^{-1}}\lb{E_1,E_0}, \\
		&  T_{0,\s}(F_0)=\dfrac{-s_1}{q+q^{-1}}K_0^{-1}E_0,
		  &   & T_{0,\s}(F_1)=-\lb{F_1,F_0},                               \\
		&  T_{0,\s}(K_0)=K_0^{-1},
		  &   & T_{0,\s}(K_1)=K_0K_1,                                    
	\end{aligned}
\end{equation}
and the action of $T_{0,\s}$ is trivial on the generators corresponding to all remaining nodes.

Let $\t=\sigma_1 \s$.  Note that the $(0,2)$ section of the diagram with parity $\t$ has the same shape as in Figure \ref{DX odd}. The operator $T_{1,\s}:\Uqghs \rightarrow \Uqght$ acts on the generators corresponding to the node $0$ by the following formulas
\begin{equation}\label{T1 CX odd}
	\begin{aligned}
		  & T_{1,\s}(E_0)=\dfrac{t_1q^{-2t_1}}{q+q^{-1}}\lb{E_0,E_1}, \\
		  & T_{1,\s}(F_0)=\lb{F_0,F_1},                               \\
		  & T_{1,\s}(K_0^{\pm 1})=(K_0K_1)^{\pm 1}.                   
	\end{aligned}
\end{equation}

Analogous to the Case 2, the action of $T_{1,\s}$ on the generators corresponding to the nodes $1$ and $2$ is given by the formulas \eqref{T_i type A 1} and \eqref{T_i type A 2} of type $A$, and $T_{1,\s}$ fixes the generators corresponding to the nodes $3,\dots,N$.

\subsubsection{Case 4.}\label{s1=s2=-1}
Suppose that the $(0,2)$ section of the Dynkin diagram has the following shape

\begin{figure}[H]
	\centering
	\begin{tikzpicture}[scale=1.5]

		\draw (-1.5,0) circle (0.1);
		\draw (-0.91-.5,0.05)--(-0.15-.5,0.05);	
		\draw (-0.91-.5,-0.05)--(-0.15-.5,-0.05);	
		\draw[shift={(-0.6,0)}, rotate=45] (-0.15,0)--(0,0);
		\draw[shift={(-0.6,0)}, rotate=-45] (-0.15,0)--(0,0);
		\draw (0.01-.5,0) circle (0.1);

		\draw[dashed] (-.4,0)--(0.4,0);	
		
		\draw[shift={(0.5,0)},rotate=45] (-0.1,0)--(0.1,0);
		\draw[shift={(0.5,0)},rotate=-45] (-0.1,0)--(0.1,0);
		
		\draw[dashed] (.6,0)--(0.9,0);

		\node [below] at (-1-.5,-0.05) {\Tiny $0$};
		\node [below] at (0-.5,-0.05) {\Tiny $1$};
		\node [below] at (.5,-0.05) {\Tiny $2$};
	\end{tikzpicture}
	\caption{$(0,2)$ section of the Dynkin diagrams $CB_N$, $CC_N$, and $CD_N$ if  $s_1=s_2=-1$.}
	\label{CX even}
\end{figure}


In this case, the operator $T_{0,\s}$ has the same formulas as in \eqref{T0 CX odd} and is trivial on  all remaining nodes.

The operator $T_{1,\s}:\Uqghs \rightarrow \Uqghs$ acts on the generators corresponding to the node $0$ by the following formulas
\begin{equation}\label{T1 CX even}
	\begin{aligned}
		  & T_{1,\s}(E_0)=\dfrac{q^{-2t_1}}{q+q^{-1}}\lb{\lb{E_0,E_1},E_1}, \\
		  & T_{1,\s}(F_0)=\dfrac{1}{q+q^{-1}}\lb{\lb{F_0,F_1},F_1},         \\
		  & T_{1,\s}(K_0^{\pm 1})=(K_0K_1^2)^{\pm 1}.                       
	\end{aligned}
\end{equation}

As before, the action of $T_{1,\s}$ on the generators corresponding to the nodes $1$ and $2$ is given by the formulas \eqref{T_i type A 1} and \eqref{T_i type A 2} of type $A$, and $T_{1,\s}$ fixes the generators corresponding to the nodes $3,\dots,N$.  Note that, since $s_1=s_2$, we have $\s=\sigma_1\s$.

\subsubsection{Case 5.} If $\g$ is of type $B$ and $N=1$, since we assume that $m>0$ and $n\geq 1$, we have that $s_1=-1$ and the Dynkin diagram is given in Figure \ref{CB1}. However, for completion, we also give the formulas for the case $s_1=1$, which corresponds to the twisted affine Lie superalgebra $\sl_3^{(2)}$.

The operator $T_{0,\s}$ has the same formulas as in \eqref{T0 CX odd}. The operator $T_{1,\s}:\Uqghs \rightarrow \Uqghs$ acts on the generators corresponding to the node $0$ by the following formulas
\begin{equation}\label{T1 CB1}
	\begin{aligned}
		  & T_{1,\s}(E_0)=\dfrac{q^{-s_1}(1-(-1)^{|1|}q^{-s_1})}{(q+q^{-1})(1-(-1)^{|1|}q^{-3s_1})}\lb{\lb{\lb{\lb{E_0,E_1},E_1},E_1},E_1}, \\
		  & T_{1,\s}(F_0)=\dfrac{q^{-s_1}(1-(-1)^{|1|}q^{-s_1})}{(q+q^{-1})(1-(-1)^{|1|}q^{-3s_1})}\lb{\lb{\lb{\lb{F_0,F_1},F_1},F_1},F_1},         \\
		  & T_{1,\s}(K_0^{\pm 1})=(K_0K_1^4)^{\pm 1},
	\end{aligned}
\end{equation}
and the action of $T_{1,\s}$ on the generators corresponding to the node $1$ is given by the formulas \eqref{T_i type A 1} of type $A$.

\subsection{The operator \texorpdfstring{$T_{2,\s}$}{T2s}}

Suppose that $N>3$. The action of $T_{2,\s}$ on the generators corresponding to the nodes $1$, $2$, and $3$ is given by the formulas \eqref{T_i type A 1} and \eqref{T_i type A 2} of type $A$, and $T_{2,\s}$ fixes the generators corresponding to the nodes $4,\dots,N$, if $N>4$.

In contrast to the action of $T_{0,\s}$ on the generators corresponding to the node $2$, which is always trivial, the operator $T_{2,\s}$ has a non-trivial action on the generators corresponding to the node $0$ every time that the nodes $0$ and $2$ are linked.

\subsubsection{Case 1.}
Suppose that the $(0,2)$ section of the Dynkin diagram has the same shape as in Figures  \ref{DX even} or \ref{DX odd}. This occurs if  $s_1=1$.

Let $\t=\sigma_2 \s$. In this case, the operator $T_{2,\s}:\Uqghs \rightarrow \Uqght$ acts on the generators corresponding to the node $0$ by the following formulas
\begin{equation}\label{T2 DX}
	\begin{aligned}
		  & T_{2,\s}(E_0)=t_2q^{-t_2}\lb{E_0,E_2},    \\
		  & T_{2,\s}(F_0)=-(-1)^{|0||2|}\lb{F_0,F_2}, \\
		  & T_{2,\s}(K_0^{\pm 1})=(K_0K_2)^{\pm 1}.   
	\end{aligned}
\end{equation}

\subsubsection{Case 2.}
If the $(0,2)$ section of the Dynkin diagram has the same shape as in Figures \ref{CX odd} or \ref{CX even}. The action of $T_{2,\s}$ on the generators corresponding to the node $0$ is trivial.

\subsection{The operators \texorpdfstring{$T_{i,\s}$, $3\leq i\leq N-2$}{Tis}}

Suppose that the $(i-1, i+1)$ section of the Dynkin diagram has a type $A$ like shape. That is,

\begin{figure}[H]
	\centering
	\begin{tikzpicture}[scale=1.5]
		
		\draw[dashed] (-1.6,0)--(-1.9,0);	
		
		\draw[shift={(-0.5,0)},rotate=45] (-0.1,0)--(0.1,0);
		\draw[shift={(-0.5,0)},rotate=-45] (-0.1,0)--(0.1,0);

		\draw (-0.9-.5,0)--(-0.1-.5,0);		
		
		\draw[shift={(-1.5,0)},rotate=45] (-0.1,0)--(0.1,0);
		\draw[shift={(-1.5,0)},rotate=-45] (-0.1,0)--(0.1,0);

		\draw (-.4,0)--(0.4,0);	
		
		\draw[shift={(0.5,0)},rotate=45] (-0.1,0)--(0.1,0);
		\draw[shift={(0.5,0)},rotate=-45] (-0.1,0)--(0.1,0);
		
		\draw[dashed] (.6,0)--(0.9,0);

		\node [below] at (-1-.5,-0.05) {\Tiny $i-1$};
		\node [below] at (0-.5,-0.05) {\Tiny $i$};
		\node [below] at (.5,-0.05) {\Tiny $i+1$};
	\end{tikzpicture}
	\caption{Type A section of the Dynkin diagram.}
	\label{A like}
\end{figure}

Let $\t=\sigma_i \s$. Define $T_{i,\s}:\Uqghs \rightarrow \Uqght$ by

\begin{equation}\label{T_i type A 1}
	\begin{aligned}
		  & T_{i,\s}(E_{i-1})=t_i q^{-t_i}\lb{E_{i-1},E_i},                   &   & T_{i,\s}(E_i)=-t_{i+1} F_i K_i,    \\
		  & T_{i,\s}(F_{i-1})=-(-1)^{|{i-1}||i|}\lb{F_{i-1},F_i},\hspace{1cm} &   & T_{i,\s}(F_i)=-t_i K_i^{-1}E_i,    \\
		  & T_{i,\s}(K_{i-1}^{\pm 1})=(K_{i-1}K_i)^{\pm 1},                   &   & T_{i,\s}(K_i^{\pm 1})=K_i^{\mp 1}, 
	\end{aligned}
\end{equation}

\begin{equation}\label{T_i type A 2}
	\begin{aligned}
		  & T_{i,\s}(E_{i+1})=t_{i+1} (-1)^{|i||{i+1}|}q^{-t_{i+1}} \lb{E_{i+1},E_i}, \\
		  & T_{i,\s}(F_{i+1})=- \lb{F_{i+1},F_i},                                     \\
		  & T_{i,\s}(K_{i+1}^{\pm 1})=(K_iK_{i+1})^{\pm 1}.                           
	\end{aligned}
\end{equation}

\subsection{The operators \texorpdfstring{$T_{N,\s}$}{T Ns} and \texorpdfstring{$T_{N-1,\s}$}{T N-1 s}}

\subsubsection{Case 1.}
Suppose that the $(N-2,N)$ section of the Dynkin diagram has the following shape.
\begin{figure}[H]
    \centering
    \begin{tikzpicture}[scale=1.5]
        \draw[rotate=45] (-0.1,0)--(0.1,0);
	\draw[rotate=-45] (-0.1,0)--(0.1,0);

        \draw[rotate=45] (0.15,0)--(0.85,0);
        \draw[rotate=-45] (0.15,0)--(0.85,0);

        \draw (.67,0.67) circle (0.1);
        \draw (.67,-0.67) circle (0.1);

        \node [right] at (0.7,0.67) {\Tiny $N-1$};
        \node [right] at (0.7,-0.67) {\Tiny $N$};
	\node [below] at (-0.2,-0.05) {\Tiny $N-2$};
\end{tikzpicture}
    \caption{$(N-2,N)$ section of the Dynkin diagrams $CD_N$ and $DD_N$ if  $s_{N-1}=s_N=1$.}
    \label{XD even}
\end{figure}

Define $T_{N,\s}:\Uqgs \rightarrow \Uqgs$ by
\begin{equation}\label{TN XD even}
    \begin{aligned}
  &  T_{N,\s}(E_N)=E_{N-1},\hspace{1cm}
                    &&  T_{N,\s}(E_{N-1})=E_N,\\
  &  T_{N,\s}(F_N)=F_{N-1},
                    &&  T_{N,\s}(F_{N-1})=F_N,\\
  &  T_{N,\s}(K_N)=K_{N-1},
                    &&  T_{N,\s}(K_{N-1})=K_N,
\end{aligned}
\end{equation}
while  $T_{N,\s}$ acts trivially on the generators corresponding to all remaining nodes.

Analogous to the case of $T_{0,\s}$, the generalized reflection $r_{N,\s}$ is actually a diagram automorphism swapping the nodes $N-1$ and $N$. Also, even though the nodes $N-2$ and $N$ are linked, the operator $T_{N,\s}$ fixes all generators corresponding to the node $N-2$.

The operator $T_{N-1,\s}:\Uqghs \rightarrow U_q(\gh_{\sigma_{N-1}\s})$ fixes the generators corresponding to the nodes $0,1,\dots,N-3,N$, and the action on generators corresponding to the nodes $N-2$ and $N-1$ is given by the formulas of type $A$ \eqref{T_i type A 1} and \eqref{T_i type A 2}. Note that, since $s_{N-1}=s_N$, we have $\s=\sigma_{N-1}\s$.

\subsubsection{Case 2.}\label{sN-1=-sN=-1}
Suppose that the $(N-2,N)$ section of the Dynkin diagram has the following shape
\begin{figure}[H]
    \centering
    \begin{tikzpicture}[scale=1.5]
     \draw[rotate=45] (-0.1,0)--(0.1,0);
	\draw[rotate=-45] (-0.1,0)--(0.1,0);

        \draw[rotate=45] (0.15,0)--(0.85,0);
        \draw[rotate=-45] (0.15,0)--(0.85,0);

        \draw (.67,0.67) circle (0.1);
        \draw (.67,-0.67) circle (0.1);

        \node [right] at (0.7,0.67) {\Tiny $N-1$};
        \node [right] at (0.7,-0.67) {\Tiny $N$};
	\node [below] at (-0.2,-0.05) {\Tiny $N-2$};

        \draw[rotate=45] (0.85,0)--(1.05,0);
        \draw[rotate=45] (0.95,0.1)--(0.95,-0.1);
        
        \draw[rotate=-45] (0.95,0.1)--(0.95,-0.1);
	\draw[rotate=-45] (0.85,0)--(1.05,0);

        \draw (0.62,0.58)--(0.62,-0.58);
        \draw (0.72,0.58)--(0.72,-0.58);

\end{tikzpicture}
    \caption{$(N-2,N)$ section of the Dynkin diagrams $CD_N$ and $DD_N$ if  $s_{N-1}=-s_N=-1$.}
    \label{XD odd}
\end{figure}

In this case, the operator $T_{N,\s}$ has the same formulas as \eqref{TN XD even} and is trivial on all remaining nodes.

Let $\t=\sigma_{N-1} \s$. Note that, since $s_{N-1}=-s_N$, the $(N-2,N)$ section of the diagram with parity $\t=\sigma_{N-1} \s$ has the same shape as in Figure \ref{XC odd}.
The operator $T_{N-1,\s}:\Uqgs \rightarrow \Uqgt$ acts on the generators corresponding to the node $N$ by
\begin{equation}\label{TN-1 XD odd}
    \begin{aligned}
  & T_{N-1,\s}(E_N)=\frac{q^2}{q+q^{-1}}\lb{E_N,E_{N-1}},\\
   & T_{N-1,\s}(F_N)=\lb{F_N,F_{N-1}},\\
   & T_{N-1,\s}(K_N^{\pm 1})=(K_NK_{N-1})^{\pm 1}.
\end{aligned}
\end{equation}

\noindent The action of $T_{N-1,\s}$ on the generators corresponding to the nodes $N-2$ and $N-1$ is given by the formulas \eqref{T_i type A 1} and \eqref{T_i type A 2} of type $A$, and $T_{N-1,\s}$ fixes the generators corresponding to the nodes $0,1,\dots,N-3$.

\subsubsection{Case 3.}\label{sN-1=-sN=1}
Suppose that the $(N-2,N)$ section of the Dynkin diagram has the following shape.

\begin{figure}[H]
    \centering
    \begin{tikzpicture}[scale=1.5]

	\draw[dashed] (1.9,0)--(1.6,0);	
	
	\draw[shift={(2,0)}, rotate=45] (-0.1,0)--(0.1,0);
			\draw[shift={(2,0)}, rotate=-45] (-0.1,0)--(0.1,0);
			
	\draw[dashed] (2.1,0)--(2.9,0);	

 \draw (3,0) circle (0.1);
	
	\draw[shift={(3,0)}, rotate=45] (-0.1,0)--(0.1,0);
			\draw[shift={(3,0)}, rotate=-45] (-0.1,0)--(0.1,0);	
		
	\draw (3.15,0.05)--(3.87,0.05);	
		\draw (3.15,-0.05)--(3.87,-0.05);	
		\draw[shift={(3.1,0)}, rotate=45] (0.15,0)--(0,0);
			\draw[shift={(3.1,0)}, rotate=-45] (0.15,0)--(0,0);	
	

        \draw (4,0) circle (0.1);
        
	\node [below] at (2,-0.05) {\Tiny $N-2$};
	\node [below] at (3,-0.05) {\Tiny $N-1$};
	\node [below] at (4,-0.05) {\Tiny $N$};
\end{tikzpicture}
    \caption{$(N-2,N)$ section of the Dynkin diagrams $CC_N$ and $DC_N$ if  $s_{N-1}=-s_N=1$.}
    \label{XC odd}
\end{figure}

In this case, define $T_{N,\s}:\Uqgs \rightarrow \Uqgs$ by
\begin{equation}\label{TN XC odd}
    \begin{aligned}
        &  T_{N,\s}(E_N)=(q+q^{-1})F_NK_N,\hspace{1cm}
                    &&  T_{N,\s}(E_{N-1})=\dfrac{-q^{2}}{q+q^{-1}}\lb{E_{N-1},E_N},\\
  &  T_{N,\s}(F_N)=\dfrac{1}{q+q^{-1}}K_N^{-1}E_N,
                    &&  T_{N,\s}(F_{N-1})=-\lb{F_{N-1},F_N},\\
  &  T_{N,\s}(K_N)=K_N^{-1},
                    &&  T_{N,\s}(K_{N-1})=K_NK_{N-1},
    \end{aligned}
\end{equation}
while  $T_{N,\s}$ acts trivially on the generators corresponding to all remaining nodes.

Let $\t=\sigma_{N-1} \s$. Note that, since $s_{N-1}=-s_N$, the $(N-2,N)$ section of the diagram with parity $\t=\sigma_{N-1} \s$ has the same shape as in Figure \ref{XD odd}.
The operator $T_{N-1,\s}:\Uqgs \rightarrow \Uqgt$ acts on the generators corresponding to the node $N$ by
\begin{equation}\label{TN-1 XC odd}
    \begin{aligned}
  & T_{N-1,\s}(E_N)=-q^{-2}\lb{E_N,E_{N-1}},\\
   & T_{N-1,\s}(F_N)=\dfrac{-1}{q+q^{-1}}\lb{F_N,F_{N-1}},\\
   & T_{N-1,\s}(K_N^{\pm 1})=(K_NK_{N-1})^{\pm 1}.
\end{aligned}
\end{equation}

\noindent The action of $T_{N-1,\s}$ on the generators corresponding to the nodes $N-2$ and $N-1$ is given by the formulas \eqref{T_i type A 1} and \eqref{T_i type A 2} of type $A$, and $T_{N-1,\s}$ fixes the generators corresponding to the nodes $0,1,\dots,N-3$.

\subsubsection{Case 4.}\label{sN-1=sN=-1}
Suppose that the $(N-2,N)$ section of the Dynkin diagram has the following shape

\begin{figure}[H]
    \centering
    \begin{tikzpicture}[scale=1.5]

	\draw[dashed] (1.9,0)--(1.6,0);	
	
	\draw[shift={(2,0)}, rotate=45] (-0.1,0)--(0.1,0);
			\draw[shift={(2,0)}, rotate=-45] (-0.1,0)--(0.1,0);
			
	\draw[dashed] (2.1,0)--(2.9,0);	

 \draw (3,0) circle (0.1);

	\draw (3.15,0.05)--(3.87,0.05);	
		\draw (3.15,-0.05)--(3.87,-0.05);	
		\draw[shift={(3.1,0)}, rotate=45] (0.15,0)--(0,0);
			\draw[shift={(3.1,0)}, rotate=-45] (0.15,0)--(0,0);	
	

        \draw (4,0) circle (0.1);
        
	\node [below] at (2,-0.05) {\Tiny $N-2$};
	\node [below] at (3,-0.05) {\Tiny $N-1$};
	\node [below] at (4,-0.05) {\Tiny $N$};
\end{tikzpicture}
    \caption{$(N-2,N)$ section of the Dynkin diagrams $CC_N$ and $DC_N$ if  $s_{N-1}=s_N=-1$.}
    \label{XC even}
\end{figure}

\noindent In this case, the operator $T_{N,\s}$ has the same formulas as \eqref{TN XC odd} and is trivial on all remaining nodes.
The operator $T_{N-1,\s}:\Uqgs \rightarrow \Uqgs$ acts on the generators corresponding to the node $N$ by
\begin{equation}\label{TN-1 XC even}
    \begin{aligned}
  & T_{{N-1},\s}(E_N)=\dfrac{q^{2}}{q+q^{-1}}\lb{\lb{E_N,E_{N-1}},E_{N-1}},\\
   & T_{{N-1},\s}(F_N)=\dfrac{1}{q+q^{-1}}\lb{\lb{F_N,F_{N-1}},F_{N-1}},\\
   & T_{{N-1},\s}(K_N^{\pm 1})=(K_NK_{N-1}^2)^{\pm 1}.
\end{aligned}
\end{equation}

\noindent As before, the action of $T_{N-1,\s}$ on the generators corresponding to the nodes $N-2$ and $N-1$ is given by the formulas \eqref{T_i type A 1} and \eqref{T_i type A 2} of type $A$, and $T_{N-1,\s}$ fixes the generators corresponding to the nodes $0,1,\dots,N-3$.  Note that, since $s_{N-1}=s_N$, we have $\s=\sigma_{N-1}\s$.

\subsubsection{Case 5.}\label{B end}
Suppose that the $(N-2,N)$ section of the Dynkin diagram has the following shape

\begin{figure}[H]
	\centering
	\begin{tikzpicture}[scale=1.5]

		\draw[dashed] (1.9,0)--(1.6,0);	
			
		\draw[shift={(2,0)}, rotate=45] (-0.1,0)--(0.1,0);
		\draw[shift={(2,0)}, rotate=-45] (-0.1,0)--(0.1,0);
					
		\draw (2.1,0)--(2.9,0);	
			
		\draw[shift={(3,0)}, rotate=45] (-0.1,0)--(0.1,0);
		\draw[shift={(3,0)}, rotate=-45] (-0.1,0)--(0.1,0);	
				
		\draw (3.1,0.05)--(3.8,0.05);	
		\draw (3.1,-0.05)--(3.8,-0.05);	
		\draw[shift={(3.86,0)}, rotate=45] (-0.15,0)--(0,0);
		\draw[shift={(3.86,0)}, rotate=-45] (-0.15,0)--(0,0);	
			
		
		\draw (4,0) circle (0.1);
		\draw[fill=black] (4,0) circle (0.033);

		\node [below] at (2,-0.05) {\Tiny $N-2$};
		\node [below] at (3,-0.05) {\Tiny $N-1$};
		\node [below] at (4,-0.05) {\Tiny $N$};
	\end{tikzpicture}
	\caption{The $(N-2,N)$ section of the Dynkin diagrams $CB_N$ and $DB_N$.}
	\label{XB}
\end{figure}


Define $T_{N,\s}:\Uqghs \rightarrow \Uqghs$ by
\begin{equation}
\begin{aligned}\label{TN XB}
	  & T_{N,\s}(E_{N-1})= q^{-s_N} \lb{\lb{E_{N-1},E_N},E_N}, &   & T_{N,\s}(E_N)=- F_N K_N,               \\
	  & T_{N,\s}(F_{N-1})=\lb{\lb{F_{N-1},F_N},F_N},           &   & T_{N,\s}(F_N)=-(-1)^{|N|} K_N^{-1}E_N, \\
	  & T_{N,\s}(K_{N-1}^{\pm 1})=(K_{N-1}K_N^2)^{\pm 1},      &   & T_{N,\s}(K_N^{\pm 1})=K_N^{\mp 1},     
\end{aligned}
\end{equation}

while $T_{N,\s}$ acts trivially on the generators corresponding to all remaining nodes.

Let $\t=\sigma_{N-1} \s$. Note that the diagram with parity $\t=\sigma_{N-1} \s$ has the same shape as in Figure \ref{XB}, but the parities of the simple roots may have changed.
The operator $T_{N-1,\s}:\Uqghs \rightarrow \Uqght$ acts on the generators corresponding to the node $N$ by
\begin{equation}\label{TN-1 XB}
	\begin{aligned}
		  & T_{N-1,\s}(E_N)=t_N(-1)^{|N-1||N|}q^{-(t_{N-1}+t_N)/2}\lb{E_N,E_{N-1}}, \\
		  & T_{N-1,\s}(F_N)=-q^{-(t_{N-1}-t_N)/2}\lb{F_N,F_{N-1}},                  \\
		  & T_{N-1,\s}(K_N^{\pm 1})=(K_NK_{N-1})^{\pm 1}.                           
	\end{aligned}
\end{equation}

The action of $T_{N-1,\s}$ on the generators corresponding to the nodes $N-2$ and $N-1$ is given by the formulas \eqref{T_i type A 1} and \eqref{T_i type A 2}, and $T_{N-1,\s}$ fixes the generators corresponding to the nodes $0,1,\dots,N-3$.  

\begin{rem}
The isomorphisms $T_{i,\s}$ generate a groupoid if one considers
the category whose objects are the superalgebras $\Uqghs$, $\s\in \cS$, and whose morphisms are  $T_{i,\s}$, $i\in \hat{I}, \s \in \cS$, their inverses, and compositions.

In our situation, the groupoid structure is equivalent to the group action as follows.

Define the following automorphisms of $\Uq (\gh_\bullet)=\bigoplus_{\s\in \cS} \Uqghs$
\begin{align}{\label{autosum}}
	  & T_i=\bigoplus_{\s\in \cS} T_{i,\s} &   & (i\in \hat{I}). 
\end{align}
    
\end{rem}

We adopt the following convention. For any $T\in \cB_N$, we denote by $T_\s$ the restriction of $T$ to the $\Uqghs$ summand in $\Uq (\gh_\bullet)$. For example,  $(T_{i}T_{j}T_{k})_\s= T_{i,\sigma_j\sigma_k\s}T_{j,\sigma_k\s}T_{k,\s}$ maps $\Uqghs$ to $U_q(\gh_{\sigma_i \sigma_j\sigma_k\s})$. 

\medskip

We have the following result.

\begin{thm}\label{thm-braid action}
	The automorphisms $T_{i}$, $i\in \hat{I}$, define an action of the braid group $\cB_N$ on $\Uq (\gh_\bullet)$, i.e., they satisfy the relations \eqref{braid relations ends}--\eqref{braid relations middle}.

\end{thm}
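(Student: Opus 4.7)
Since each $T_{i,\s}$ is an isomorphism of superalgebras, it is enough to verify the braid relations on the generators $E_j, F_j, K_j^{\pm 1}$, $j\in\hat{I}$, of each summand $\Uqghs$. For any composition of $T_i$'s appearing in a relation, we track the image parity sequence via the homomorphism $\pi$ of \eqref{pi}, so both sides of a candidate identity are maps $\Uqghs \to U_q(\gh_{\sigma\cdot\s})$ with the same source and target.

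The first step is to dispose of the mid-range relations $T_i T_{i-1} T_i = T_{i-1} T_i T_{i-1}$ for $2\le i\le N-1$. By inspection of the formulas \eqref{T_i type A 1}--\eqref{T_i type A 2}, only generators attached to nodes in $\{i-2, i-1, i, i+1\}$ are moved nontrivially on either side. The corresponding $(i-2, i+1)$ section of every Dynkin diagram is of shape~\ref{A like}, so these identities reduce to the super analog of the classical Lusztig type $A$ braid verification: the check is a direct expansion using \eqref{q Jacobi 1}--\eqref{q Jacobi 2} together with the Serre relations \eqref{Serre traditional}, \eqref{Serre odd node} and \eqref{Serre triangle}, with the super signs $(-1)^{|X||Y|}$ and the scalars $t_k q^{\pm t_k}$ bookkept as the parity sequence is permuted by $\sigma_i$. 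Commutativity $T_iT_j=T_jT_i$ for nodes $i,j$ whose operators act on disjoint parts of the diagram is immediate from the formulas, and so it suffices to concentrate on the two quartic relations at the ends of the diagram.

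For $T_1 T_0 T_1 T_0 = T_0 T_1 T_0 T_1$ the approach is to split into the four boundary configurations treated in Cases 1--4 of Subsection~\ref{B end}'s analogue at the $D/C$ end, namely $(s_1,s_2)\in\{(+,+),(+,-),(-,+),(-,-)\}$, plus the special rank-one situation in Case 5. In each configuration, the only generators that both sides move are those attached to nodes $0, 1, 2$; generators at nodes $\ge 3$ are carried identically by $T_0, T_1$ and drop out. For each such generator one expands both $T_{1,\sigma_1 \s}T_{0,\sigma_1\s}T_{1,\s}T_{0,\s}$ and $T_{0,\sigma_1\s}T_{1,\s}T_{0,\s}T_{1,\s}$ applied to it, using \eqref{T0 DX even}, \eqref{T1 DX odd}, \eqref{T0 CX odd}, \eqref{T1 CX odd}, \eqref{T1 CX even}, \eqref{T1 CB1}, and the type $A$ formulas \eqref{T_i type A 1}--\eqref{T_i type A 2} at nodes $1$ and $2$. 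Intermediate iterated $q$-brackets are then normalized via the $q$-Jacobi identities \eqref{q Jacobi 1}--\eqref{q Jacobi 2}, the $K$-commutation rules \eqref{K commutator 1}--\eqref{K commutator 2}, and the Serre-type relations \eqref{EE rel}--\eqref{3 nodes relation}. The analogous verification of $T_N T_{N-1} T_N T_{N-1} = T_{N-1} T_N T_{N-1} T_N$ is shorter: the $(N-2, N)$ section has the fixed shape \ref{XB}, only $s_N$ varies, and the check involves only generators at nodes $N-2, N-1, N$, with \eqref{TN XB}--\eqref{TN-1 XB} replacing the $0$-end formulas.

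The main obstacle is the rank-one case $N=1$ in Subsection~\ref{B end} (Case 5), where $T_{1,\s}(E_0)$ involves a four-fold iterated bracket $\lb{\lb{\lb{\lb{E_0,E_1},E_1},E_1},E_1}$, so the quartic relation $T_1T_0T_1T_0=T_0T_1T_0T_1$ produces expressions of total bracket-degree up to five that must be reduced using the dedicated Serre relation \eqref{3 nodes relation} for the $CB_1$ diagram. The precise scalar coefficients $\tfrac{q^{-s_1}(1-(-1)^{|1|}q^{-s_1})}{(q+q^{-1})(1-(-1)^{|1|}q^{-3s_1})}$ in \eqref{T1 CB1}, as well as those in \eqref{T1 DX odd}, \eqref{T1 CX odd} and \eqref{T1 CX even}, are determined exactly by the requirement that these end-of-diagram relations close, which is what the normalization asserted in the paragraph following \eqref{pi} is promising. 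Once this case is settled, the proof is a long but mechanical compilation of polynomial identities in $q, q^{-1}$ and $\Z_2$-sign bookkeeping, yielding the braid group action on $\Uq(\gh_\bullet)$.
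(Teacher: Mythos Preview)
Your overall approach---direct case-by-case verification on generators, reducing iterated brackets via the $q$-Jacobi identities \eqref{q Jacobi 1}--\eqref{q Jacobi 2} and invoking the appropriate Serre relations---is exactly what the paper does.

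However, your final paragraph contains a genuine error. You identify the rank-one case $N=1$ as ``the main obstacle,'' requiring a check of the quartic relation $T_1T_0T_1T_0=T_0T_1T_0T_1$. But the paper explicitly defines $\cB_1$ to be the \emph{free} group on $T_0$ and $T_1$ (see the sentence immediately after the display of the braid relations \eqref{braid relations ends}--\eqref{braid relations middle}), so when $N=1$ there are no braid relations to verify at all. Your invocation of relation \eqref{3 nodes relation} ``for the $CB_1$ diagram'' is likewise misplaced: that relation requires three distinct nodes $i,j,k$, which the two-node $CB_1$ diagram does not have. This paragraph should simply be deleted.

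A smaller imprecision: at the $N$-end you write ``only $s_N$ varies,'' but the formulas in \eqref{TN-1 XB} and the parities $|N{-}1|$, $|N|$ depend on both $s_{N-1}$ and $s_N$, and the action on $E_{N-2}$ brings in $s_{N-2}$ as well, so there is still a case split. The paper also notes that in most cases the braid relations follow \emph{without} using any Serre relation; it then exhibits two instances where a Serre relation is genuinely needed, one of which is the $N$-end check on $E_{N-2}$ for $s_{N-2}=s_{N-1}=s_N=1$, requiring two applications of \eqref{Serre traditional}, and the other the $0$-end check on $E_2$ for $s_1=-s_2=s_3=1$, requiring \eqref{Serre triangle}.
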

\begin{proof} In {\cite[Prop. 7.4.1]{Yamane1999}}, the operators $T_i$ were defined up to normalization factors and it was shown that they provide isomorphisms. Our formulas contain the explicit coefficients for these isomorphisms to satisfy the braid relations. The relations are proved by direct, case-by-case computation. In most cases, the result is clear after simplifications not involving the Serre relations \eqref{Serre traditional}--\eqref{Serre F}. However, in some cases, the appropriate Serre relations must be used. As the full proof is extensive, we give the details for some of the most complex and non-trivial cases below as an illustration.
	
	Let $\g$ be of type $B$ or $D$ with $N\geq 3$, and let $\s$ be such that $s_1=-s_2=s_3=1$. Then,
	\begin{multline}
		(T_1T_0T_1T_0)_\s(E_2)-(T_0T_1T_0T_1)_\s(E_2)=\\
		=\dfrac{1}{q+q^{-1}}\left(-E_0E_1E_2-(q+q^{-1})E_0E_2E_1 + E_1E_0E_2 + (q+q^{-1}) E_1E_2E_0 -  E_2E_0E_1 +E_2E_1E_0\right),
	\end{multline}
	which equals $0$ by applying the Serre relation \eqref{Serre triangle} with $i=0, j=1$, and $k=2$.

  Consider $\g$ of type $B$ or $D$ with $N\geq 3$ and let $\s$ be such that $s_{1}=s_{2}=-s_3=-1$. Then,
 \begin{multline}
		(T_{1}T_{2}T_1)_\s(E_{0})-(T_2T_{1}T_{2})_\s(E_{0})=\\
		=\dfrac{q^{-3}}{(q+q^{-1})^2}\Big(q^3 E_{0}\left(  E_{1} E_{2}^2 +   E_{2}^2 E_{1} - (q+q^{-1}) E_{2} E_{1} E_{2}  \right) +
            q^3\left(E_{1} E_{2}^2 +   E_{2}^2 E_{1} - (q+q^{-1}) E_{2} E_{1} E_{2}  
                    \right)E_{0}\\
		+q^5 E_{1}\left(  E_{0} E_{2}^2 +   E_{2}^2 E_{0} - (q+q^{-1}) E_{2} E_{0} E_{2}  \right) + 
                q\left(E_{0} E_{2}^2 +   E_{2}^2 E_{0} - (q+q^{-1}) E_{2} E_{0} E_{2}  
                    \right)E_{1}\Big),
	\end{multline}
 which becomes $0$ by applying the Serre relation \eqref{Serre traditional} with $i=2$, $j=0$ and $j=1$. Note that, $E_2$ is even in the image.

Now, consider $\g=\osp(2m+1,2n)$ with $N\geq 3$ and let $\s$ be such that $s_{N-2}=s_{N-1}=s_N=1$. Then,
	\begin{multline}\label{braid check N-1}
		(T_{N-1}T_NT_{N-1}T_N)_\s(E_{N-2})-(T_NT_{N-1}T_NT_{N-1})_\s(E_{N-2})=\\
		=q^{-3}\big(q E_{N-2} E_{N-1}^2 E_{N}^2 - q E_{N-2} E_{N}^2 E_{N-1}^2 - q^2 E_{N-1}^2 E_{N-2} E_{N}^2 + q^2 E_{N}^2 E_{N-1}^2 E_{N-2} +\\ +(q-q^2) E_{N-1} E_{N-2} E_{N-1} E_{N}^2 + (q^2 -q) E_{N}^2 E_{N-1} E_{N-2} E_{N-1} - (1+q) E_{N-2} E_{N-1} E_{N} E_{N-1} E_{N} + \\ + (1+q) E_{N-2} E_{N} E_{N-1} E_{N} E_{N-1} + (q^2 + q^3)E_{N-1} E_{N} E_{N-1} E_{N-2} E_{N} - (q^2 + q^3) E_{N} E_{N-1} E_{N} E_{N-1} E_{N-2} \big).
	\end{multline}
	
	Using the fact that $E_{N-2}$ and $E_{N}$ commute and the Serre relation \eqref{Serre traditional} with $i=N-2$, $j=N-1$, we can rewrite \eqref{braid check N-1} with the $E_{N-2}$ factor either to the left or to the right of each monomial. Then,
	\begin{multline}
		(T_{N-1}T_NT_{N-1}T_N)_\s(E_{N-2})-(T_NT_{N-1}T_NT_{N-1})_\s(E_{N-2})=\\
		=\frac{q^{-3}+q^{-2}}{q+q^{-1}}\Big( E_{N-2}\left( E_{N-1}^2 E_{N}^2 -  E_{N}^2 E_{N-1}^2 - (q+q^{-1}) E_{N-1} E_{N} E_{N-1} E_{N} + (q+q^{-1}) E_{N} E_{N-1} E_{N} E_{N-1}\right) \\
		-q^2\left( E_{N-1}^2 E_{N}^2 -  E_{N}^2 E_{N-1}^2 - (q+q^{-1}) E_{N-1} E_{N} E_{N-1} E_{N} + (q+q^{-1}) E_{N} E_{N-1} E_{N} E_{N-1} \right)E_{N-2} \Big),
	\end{multline}
	which becomes $0$ by applying the Serre relation \eqref{Serre traditional} with $i=N-1, j=N$.

Suppose that $\g$ is of type $D$ with $N\geq 3$, and let $\s$ be such that $s_{N-2}=-s_{N-1}=s_N=1$. Then,
 \begin{multline}
		(T_{N-1}T_NT_{N-1}T_N)_\s(E_{N-2})-(T_NT_{N-1}T_NT_{N-1})_\s(E_{N-2})=\\
		=-E_{N-2}E_{N-1}E_{N} +E_{N-2}E_{N}E_{N-1} -(q+q^{-1})E_{N-1}E_{N-2}E_{N} -E_{N-1}E_{N}E_{N-2} \\+(q+q^{-1})E_{N}E_{N-2}E_{N-1} +E_{N}E_{N-1}E_{N-2}  
	\end{multline}
 which vanishes by applying the Serre relation \eqref{Serre triangle} with $i=N-2, j=N-1$, and $k=N$.
\end{proof}


Recall that the elements $T_{\omega_i}$, $i\in I$, defined in \ref{subsection braid group}, act trivially on the parity sequences via the homomorphism $\pi$. Therefore, we have that $T_{\omega_i,\s}$ are, in fact, automorphisms. Note that $ T_{\omega_i,\s}:(\Uqghs)_{\alpha_j} \rightarrow (\Uqghs)_{\alpha_j-\delta_{i,j}\delta}$. In particular, $T_{\omega_i,\s}(E_j)=E_j$ and $T_{\omega_i,\s}(F_j)=F_j$ for all $i\neq j \in I$.

Note that the explicit form of $T_{\omega_i,s}(E_i)$ and $T_{\omega_i,s}(F_i)$ consists of long nested commutators whose closed form provides little computational insight. Instead, the key properties used in the proof of Lemma \ref{lem:surjectivity} are these mentioned in the above paragraph and the commutation relations of the braid group generators.

\if
\noindent Define the following automorphisms of $\Uqghs$ 
\begin{align*}
	  & T_{\omega_1,\s}:=(T_0 T_1 T_2 T_3\cdots T_N T_{N-1}\cdots T_2 T_1)_\s,                        \\
	  & T_{\omega_2,\s}:=(T_{1}^{-1}T_{\omega_1}T_{1}^{-1}T_{\omega_1})_\s,                           \\
	  & T_{\omega_{i+1},\s}:=(T_i^{-1} T_{\omega_{i}}T_i^{-1}T_{\omega_{i}}T_{\omega_{i-1}}^{-1})_\s. 
\end{align*}
It is straightforward to check that they preserve the parity.

\begin{prop}{\cite{Lusztig1989}} The automorphisms $T_{\omega_{i},\s}$, $i\in I$, $\s \in \cS$, satisfy the following identities
	\begin{align*}
		&(T_{\omega_i}T_{\omega_j})_\s=(T_{\omega_j}T_{\omega_i})_\s,                 \\
		&( T_{\omega_{N-1}}^2 T_{\omega_N}^{-1})_\s=(T_N^{-1} T_{\omega_N}T_N^{-1})_\s .
	\end{align*}
\end{prop}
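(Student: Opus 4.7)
My plan is to derive both identities by pulling them back from the corresponding identities in the abstract braid group $\cB_N$, via the braid-group homomorphism provided by the preceding theorem. The whole content of the proposition is that braid-group relations lift to operator relations once the representation has been constructed.

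The first ingredient is Lusztig's result \cite{Lusztig1989}, already recalled in Subsection \ref{subsection braid group}, that the identities
\begin{align*}
    T_{\omega_i}T_{\omega_j} &= T_{\omega_j}T_{\omega_i},\\
    T_{\omega_{N-1}}^2 T_{\omega_N}^{-1} &= T_N^{-1} T_{\omega_N} T_N^{-1}
\end{align*}
hold in the abstract group $\cB_N$. Here the $T_{\omega_i}$ are the specific words in $T_0, T_1, \dots, T_N$ introduced in Subsection \ref{subsection braid group}, and these are derived purely from the defining braid relations \eqref{braid relations ends}--\eqref{braid relations middle}; I would simply cite Lusztig rather than reprove them.

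The second ingredient is the preceding theorem, which says exactly that the automorphisms $T_i$ of $\Uq(\gh_\bullet)$ satisfy the braid relations. Equivalently, there is a group homomorphism $\rho: \cB_N \to \mathrm{Aut}(\Uq(\gh_\bullet))$ sending $T_i \in \cB_N$ to the automorphism $T_i$ from \eqref{autosum}. Applying $\rho$ to the two identities above yields the same equalities at the level of automorphisms of $\Uq(\gh_\bullet)$, i.e.\
\begin{align*}
T_{\omega_i} T_{\omega_j} &= T_{\omega_j} T_{\omega_i},\\
T_{\omega_{N-1}}^2 T_{\omega_N}^{-1} &= T_N^{-1} T_{\omega_N} T_N^{-1}
\end{align*}
hold as maps $\Uq(\gh_\bullet) \to \Uq(\gh_\bullet)$.

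Finally, to obtain the $(\cdot)_\s$-restricted form, I invoke the observation (also from Subsection \ref{subsection braid group}) that the abelian subgroup generated by the $T_{\omega_i}$, together with $T_N$, lies in $\ker \pi$. Hence both sides of each identity preserve the summand $\Uqghs \subset \Uq(\gh_\bullet)$, and restricting the global equalities to $\Uqghs$ produces precisely the stated identities of automorphisms of $\Uqghs$. I do not expect any substantial obstacle: the real content was the braid-relation verification carried out in the preceding theorem, and once that is in hand, this proposition is a formal consequence. The only bookkeeping point is that the parity subscripts on the two sides of the second identity agree because $T_N$ acts trivially on $\cS$, so no relabeling of $\s$ is needed when restricting.
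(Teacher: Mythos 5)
Your proposal is correct and coincides with the paper's own treatment: the paper records the two identities as relations in the abstract group $\cB_N$ with a citation to Lusztig, and then the theorem establishing the $\cB_N$-action on $\Uq(\gh_\bullet)$, together with the observation that $T_{\omega_i}$ and $T_N$ lie in $\ker\pi$ so that each summand $\Uqghs$ is preserved, yields the restricted identities exactly as you describe. No further argument is given or needed in the paper.
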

Note that $ T_{\omega_i,\s}:(\Uqghs)_{\alpha_j} \rightarrow (\Uqghs)_{\alpha_j-\delta_{i,j}\delta}$. In particular, $T_{\omega_i,\s}(E_j)=E_j$ and $T_{\omega_i,\s}(F_j)=F_j$ for all $i\neq j \in I$.

\fi

\section{Loop-like generators}\label{sect-quantum-newD}

Our aim is to provide a realization of a superalgebra $U_q^D(\ghs)$  in terms of current generators, akin to the defining relations found in the {\it new Drinfeld realization} of quantum affine algebras.

First, let us establish the following notations:
\begin{align*}
	                                                                 & X^\pm_i(z):=\sum_{r\in \Z}X^\pm_{i,r}z^{-r}, \qquad K_i^\pm(z):=K_i^{\pm 1}\exp \left(\pm (q-q^{-1})\sum_{t>0}H_{i,\pm t}z^{\mp t}\right), \\
	                                                                 & u_{i,j,r}:=\begin{cases}                               
	\dfrac{[2rA_{N,N}]_q}{r}-\dfrac{[rA_{N,N}]_q}{r}, & \text{if $i=j=N$, $|N|=1$                , and $\gs=\osp(2m+1,2n)$}; \\
    \dfrac{[rA_{i,j}]_q}{r}, & \text{otherwise}.
	\end{cases}
\end{align*}

The definition of $u_{i,j,r}$ in the special case for $i=j=N$, $|N|=1$ , and $\gs=\osp(2m+1,2n)$ is the key to obtain the new relations for type $B$. A similar situation occurs in the case of quantum twisted affine algebras, see \cite{damiani2012}.

Next, we define the following quantum superalgebra.

\begin{dfn}\label{def drinfeld real}
	The quantum superalgebra $U_q^D(\ghs)$ is the unital associative superalgebra generated by {\it current generators} 
	$X^\pm_{i,r}, H_{i,t}$, $K^{\pm 1}_i, C^{\pm 1}$, where $i \in I,$ $r,t\in \Z,$ $ t\neq 0$, subject to the relations below.
		
	The parity of generators is given by $|X^\pm_{i,r}|=|i|$, and all remaining generators have parity $0$. 
		
	The defining relations are as follows:
	\begin{align}
                    & \text{$C$ is central},\quad K_iK_j=K_jK_i. \label{KK rel}\\
                    & K_iX^\pm_j(z)K_i^{-1}=q^{\pm A_{i,j}}X^\pm_j(z). \label{KX rel}\\
                    & [H_{i,t},H_{j,s}]=\delta_{t+s,0}\, u_{i,j,t}\frac{C^t-C^{-t}}{q-q^{-1}}. \label{HH rel}\\
                    & [H_{i,t},X^{\pm}_j(z)]=\pm  u_{i,j,t}C^{-(t\pm|t|)/2}z^t X^\pm_j(z). \label{HX rel}\\
                    & [X^+_i(z),X^-_j(w)]=\frac{\delta_{i,j}}{q-q^{-1}}\left(\delta\left(C\frac{w}{z}\right)K_i^+(w)-\delta\left(C\frac{z}{w}\right)K_i^-(z)\right). \label{XX rel}\\[.5cm]
                    & \hspace{-0.5cm}\text{If $A_{i,j}\neq 0$ (except when $i=j=N$, $|N|=1$, and $\g=\osp(2m+1,2n)$), then} \notag\\
                    & \left(z-q^{\pm A_{i,j}}w\right)X^\pm_i(z)X^\pm_j(w)+(-1)^{|i||j|}\left(w-q^{\pm A_{i,j}}z\right)X^\pm_j(w)X^\pm_i(z)=0. \label{s1 rel}  \\[.5cm]
                    & \hspace{-.5cm} \text{If $A_{i,j}=0$, then }\notag \\ &[X^\pm_i(z),X^\pm_j(w)]=0.  \label{XX i not j} \\[.5cm]
		          & \hspace{-.5cm}\text{If } A_{i,i}\neq 0 \text{ and } k=1-\frac{2A_{i,j}}{A_{i,i}}, \text{then } \notag\\
                    &\Sym_{{z_1,z_2,\dots,z_k}}\lb{X^\pm_i(z_1),\lb{X^\pm_i(z_2),\lb{\cdots,\lb{X^\pm_i(z_k),X^\pm_{j}(w)}}}}=0. \label{Serre1} \\[.5cm]
                    &\hspace{-.5cm}\text{If } A_{j,j}=0 \text{ and } A_{i,j}=-A_{j,k}\neq 0, \text{ i.e., if }   
		\begin{tikzpicture}[baseline=-3pt]
		\draw[rotate=45] (-0.1,0)--(0.1,0);
		\draw[rotate=-45] (-0.1,0)--(0.1,0);
		\draw (1,0) circle (0.1);
		\draw[shift={(1,0)},rotate=45] (-0.1,0)--(0.1,0);
		\draw[shift={(1,0)},rotate=-45] (-0.1,0)--(0.1,0);
		\draw (0.1,0)--(0.9,0);
		\draw (1.1,0)--(1.9,0);
		\draw[shift={(2,0)},rotate=45] (-0.1,0)--(0.1,0);
		\draw[shift={(2,0)},rotate=-45] (-0.1,0)--(0.1,0);
		\node [below] at (0,0) {\Tiny $i$};
		\node [below] at (1,0) {\Tiny $j$};
		\node [below] at (2,0) {\Tiny $k$};
		\end{tikzpicture},  \text{ or }  \begin{tikzpicture}[baseline=-3pt]
		\draw[rotate=45] (-0.1,0)--(0.1,0);
		\draw[rotate=-45] (-0.1,0)--(0.1,0);
		\draw (1,0) circle (0.1);
		\draw[shift={(1,0)},rotate=45] (-0.1,0)--(0.1,0);
		\draw[shift={(1,0)},rotate=-45] (-0.1,0)--(0.1,0);
		\draw (0.1,0)--(0.9,0);
		\draw (1.1,0.05)--(1.81,0.05);
		\draw (1.1,-0.05)--(1.81,-0.05);
		\draw[shift={(1.86,0)}, rotate=45] (-0.15,0)--(0,0);
		\draw[shift={(1.86,0)}, rotate=-45] (-0.15,0)--(0,0);
		\draw (2,0) circle (0.1);
		\draw[fill=black] (2,0) circle (0.033);
		\node [below] at (0,0) {\Tiny $i$};
		\node [below] at (1,0) {\Tiny $j$};
		\node [below] at (2,0) {\Tiny $k$};
		\end{tikzpicture}, {then }\notag \\
                    & \Sym_{{z_1,z_2}}\lb{X^\pm_j(z_1),\lb{X^\pm_{i}(w_1),\lb{X^\pm_j(z_2),X^\pm_{k}(w_2)}}}=0.\label{Serre odd}\\[.5cm]
                    &  \hspace{-.5cm}\text{If $\g=\osp(2m+1,2n)$ and $|N|=1$, then }\notag\\
&\Sym_{{z_1,z_2,z_3}}z_3^{\pm1}\lb{X^\pm_N(z_1),\lb{X^\pm_N(z_2),X^\pm_N(z_3)}}=0,\label{cubic relation} \\
                            & \Sym_{{z_1,z_2}} \left(z_1^{\pm2}[X^\pm_N(z_1),X^\pm_N(z_2)]_{q^{A_{N,N}}}   - q^{2A_{N,N}}(z_1z_2)^{\pm1}[X^\pm_N(z_1),X^\pm_N(z_2)]_{q^{-3A_{N,N}}} \right)=0, and\
                           \label{quadratic N}\\
&\begin{aligned}[t]  		                                  &\Sym_{{z_1,z_2}} z_1^{\pm 1}\left(q^{A_{N,N}}[[X^\pm_N(z_1),X^\pm_N(z_2)]_{q^{A_{N,N}}},X^\pm_{N-1}(w)]_{q^{-2A_{N,N}}}  \right.  \\
                                            &\hphantom{dddddddddd}\left.  + (q+q^{-1})[[X^\pm_{N-1}(w),X^\pm_N(z_1)]_{q^{A_{N,N}}},X^\pm_{N}(z_2)]\right) =0.
		                \end{aligned}\label{cubic N N-1}\\[.5cm]                     
                  & \hspace{-0.5cm}\text{If } \begin{tikzpicture}[baseline=-3pt]
		\draw (0,0) circle (0.1);
		\draw[rotate=45] (-0.1,0)--(0.1,0);
		\draw[rotate=-45] (-0.1,0)--(0.1,0);
		\draw (1,0) circle (0.1);
		\draw[shift={(1,0)},rotate=45] (-0.1,0)--(0.1,0);
		\draw[shift={(1,0)},rotate=-45] (-0.1,0)--(0.1,0);
		\draw (0.1,0)--(0.9,0);
		\draw (1.2,0.05)--(1.9,0.05);
		\draw (1.2,-0.05)--(1.9,-0.05);
		\draw[shift={(1.15,0)}, rotate=45] (0.15,0)--(0,0);
		\draw[shift={(1.15,0)}, rotate=-45] (0.15,0)--(0,0);
		\draw (2,0) circle (0.1);
		\node [below] at (0,0) {\Tiny $i$};
		\node [below] at (1,0) {\Tiny $j$};
		\node [below] at (2,0) {\Tiny $k$};
		\end{tikzpicture}, \text{then }\notag\\	
        & \Sym_{{z_1,z_2}}\Sym_{{w_1,w_2,w_3}}\Big( \lb{\lb{X^\pm_{i}(z_1),X^\pm_{j}(w_1)},\lb{\lb{X^\pm_{i}(z_2),X^\pm_{j}(w_2)},\lb{X^\pm_{j}(w_3),X^\pm_{k}(y)}}}\Big)=0. \label{Serre type D odd}\\[.5cm]
        &\hspace{-.5cm}  \text{If } \begin{tikzpicture}[baseline=-3pt]
		\draw[shift={(-1,0)},rotate=45] (-0.1,0)--(0.1,0);
		\draw[shift={(-1,0)},rotate=-45] (-0.1,0)--(0.1,0);
		\draw (-0.1,0)--(-0.9,0);
		\draw (0,0) circle (0.1);
		\draw (1,0) circle (0.1);
		\draw[shift={(1,0)},rotate=45] (-0.1,0)--(0.1,0);
		\draw[shift={(1,0)},rotate=-45] (-0.1,0)--(0.1,0);
		\draw (0.1,0)--(0.9,0);
		\draw (1.2,0.05)--(1.9,0.05);
		\draw (1.2,-0.05)--(1.9,-0.05);
		\draw[shift={(1.15,0)}, rotate=45] (0.15,0)--(0,0);
		\draw[shift={(1.15,0)}, rotate=-45] (0.15,0)--(0,0);
		\draw (2,0) circle (0.1);
		\node [below] at (-1,0) {\Tiny $i$};
		\node [below] at (0,0) {\Tiny $j$};
		\node [below] at (1,0) {\Tiny $k$};
		\node [below] at (2,0) {\Tiny $l$};
		\end{tikzpicture}, \text{then }\notag \\
        &\begin{aligned}[t]
                &\Sym_{{z_1,z_2}}\Sym_{{w_1,w_2,w_3}}\Big(\\
                & \hspace{0.5cm} \lb{\lb{\lb{\lb{\lb{\lb{X^\pm_{i}(y_1),X^\pm_{j}(z_1)},X^\pm_{k}(w_1)},X^\pm_{l}(y_2)},X^\pm_{k}(w_2)},X^\pm_{j}(z_2)},X^\pm_{k}(w_3)}\Big)=0.\label{Serre type D even}
            \end{aligned}\\[.5cm]
        &\hspace{-.5cm} \text{If } A_{i,j} A_{i,k} A_{j,k}\neq 0,\; A_{i,j} + A_{i,k} + A_{j,k}= 0,\; \text{and }\, |i||j|+|j||k|+|i||k|\equiv 1, \text{ then } \notag	\\
            &(-1)^{|i||k|}[(\alpha_i,\alpha_k)]\lb{\lb{X^\pm_{i}(z),X^\pm_{j}(w)},X^\pm_{k}(y)}=(-1)^{|i||j|}[(\alpha_i,\alpha_j)]\lb{\lb{X^\pm_{i}(z),X^\pm_{k}(y)},X^\pm_{j}(w)}.\label{Serre triangle currents}
	\end{align}
\end{dfn}

\begin{rem}
    The relation \eqref{cubic N N-1} is a consequence of the remaining relations.
\end{rem}

\medskip
\begin{prop}\label{automorphisms} The superalgebra $U_q^D(\ghs)$ has the following symmetries.
	\begin{enumerate}  
		\item The map $\eta_\s: U_q^D(\ghs) \rightarrow U_q^D(\ghs)$ is an anti-automorphism of superalgebras defined by:
		      \begin{align*}
		      	  & \eta_\s(C)=C,\qquad \eta_\s(X^\pm_i(z))=X^\pm_i(z^{-1}),\qquad \eta_\s(K_i^\pm(z))=K_i^\mp(C^{-1}z^{-1}),  \quad  i\in {I}. 
		      \end{align*} 
		       
		\item The map $\Omega_\s: U_q^D(\ghs) \rightarrow U_q^D(\ghs)$ is an automorphism defined by:
		      \begin{align*}
		      	  & \Omega_\s(C)=C,\quad  \Omega_\s(X^\pm_i(z))=(-1)^{|i|}X^\mp_i(z), \quad \Omega_\s(K_i^\pm(z))=K_i^\mp(z),  \quad i\in {I}. 
		      \end{align*} 
		      
		\item For each $i\in I$, the automorphism $\tau_{i,\s}: U_q^D(\ghs) \rightarrow U_q^D(\ghs)$ of superalgebras is defined as:
		      \begin{align*}
		      	  & \tau_{i,\s}(C)=C,\qquad \tau_{i,\s}(X^\pm_{j,r})=(-1)^{i\delta_{i,j}}X^\pm_{j,r\mp \delta_{i,j}}, \qquad \tau_{i,\s}(H_{j,r})=H_{j,r}, \qquad  \tau_{i,\s}(K_j)=C^{-\delta_{i,j}}K_j \quad i\in {I}. 
		      \end{align*}  
	\end{enumerate}
\end{prop}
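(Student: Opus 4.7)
The strategy for each of the three items is the same: verify that the putative (anti)automorphism preserves every defining relation \eqref{KK rel}--\eqref{cubic N N-1} of $U_q^D(\ghs)$. Once preservation is established, the stated inverses are built from the same formulas (with the obvious sign adjustments), so bijectivity is automatic and the only real content is that the relations go to relations.

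For the anti-automorphism $\eta$, the key observation is that $\eta$ reverses products and sends $X_i^\pm(z)\mapsto X_i^\pm(z^{-1})$, $K_i^\pm(z)\mapsto K_i^\mp(C^{-1}z^{-1})$. I would first check the purely Cartan-type relations \eqref{KK rel}--\eqref{HH rel}: here reversing order is harmless and the exponents are invariant under $t\mapsto -t$ once one uses $u_{i,j,-t}=u_{i,j,t}$ (which holds by parity of $[rA_{i,j}]_q/r$). The commutator relation \eqref{HX rel} and the defining relation \eqref{XX rel} are preserved by the same invariance, together with $\delta(w/z)=\delta(z/w)$ and the $C$-substitution built into $\eta(K_i^\pm(z))$. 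For the quadratic Serre-type relations \eqref{s1 rel}--\eqref{XX i not j}, applying $\eta$ replaces $(z-q^{\pm A}w)X_i^\pm(z)X_j^\pm(w)$ by $(z^{-1}-q^{\pm A}w^{-1})X_j^\pm(w^{-1})X_i^\pm(z^{-1})$, and multiplying by $zw$ (a unit on the Laurent expansion) recovers the original relation with $z,w$ swapped. The cubic and higher Serre relations \eqref{Serre1}--\eqref{Serre N} are preserved because they are symmetrized in the $z_k$'s and contain no external scalar powers of $z_k$. The delicate cases are \eqref{cubic relation}, \eqref{quadratic N} and \eqref{cubic N N-1}: here the explicit $z^{\pm 1}$ and $z^{\pm 2}$ factors flip sign under $\eta$, and exactly this sign flip is what the notation ``$\pm 1$'' and ``$\pm 2$'' encodes, so the relation with the upper sign goes to the relation with the lower sign.

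For the automorphism $\Omega$, the same program applies, but now the pertinent symmetry is the $+\leftrightarrow -$ symmetry of the relations. The only nontrivial bookkeeping is the sign $(-1)^{|i|}$ in $\Omega(X_i^-(z))$, which is introduced precisely so that $\Omega$ commutes with the supercommutator in \eqref{XX rel} (the sign is needed because applying $\Omega$ to $[X_i^+(z),X_j^-(w)]$ must land in $[X_i^-(z),X_j^+(w)]=(-1)^{|i||j|+\text{sign}}[X_j^+(w),X_i^-(z)]$, and one checks that the chosen sign convention makes the two sides match). For the Serre-type relations \eqref{s1 rel}--\eqref{cubic N N-1}, each $\pm$ sign on an exponent (of $q$, or on a scalar power of $z$) flips simultaneously under $\Omega$, so the upper-sign relation is sent to the lower-sign relation.

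For $\tau_i$, the verification is essentially grading-theoretic. Observe that if one assigns to $X_j^\pm(z)$ the ``loop degree'' $z$-shift, then $\tau_i$ multiplies $X_i^+(z)$ by $-z^{-1}$ and $X_i^-(z)$ by $-z$, fixes all other $X_j^\pm(z)$, fixes all $H_{j,r}$, and multiplies $K_i$ by $C^{-1}$ (with $C$ central). Each defining relation is homogeneous in this shift, once one tracks how $K_i^\pm(z)$ transforms (unchanged except for the $K_i$ prefactor, which picks up $C^{-1}$): for instance, in \eqref{XX rel} with $j=i$, the substitution $K_i^+(w)\mapsto C^{-1}K_i^+(w)$ is exactly compensated by the shift of $X_i^\pm$ and by the delta-functions $\delta(Cw/z), \delta(Cz/w)$. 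The Serre relations contain no explicit factors of $K$ or $C$, so homogeneity under the shift together with the sign $(-1)^{i\delta_{i,j}}$ makes them preserved. The main place where this needs a second look is \eqref{quadratic N} and \eqref{cubic N N-1}, where explicit monomials $z_1^{\pm 2}$ and $(z_1z_2)^{\pm 1}$ appear; substituting $z_k \mapsto z_k$ in the $X_N^\pm(z_k)$ and tracking only the overall shift $\mp 1$ per factor, these monomials rescale by a global factor, which cancels.

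The main obstacle I expect is the bookkeeping in the last two items for $\eta$ and $\Omega$: namely, confirming that the sign $(-1)^{|i|}$ in $\Omega$ and the substitution $C^{-1}z^{-1}$ in $\eta(K_i^\pm(z))$ are pinned down exactly so that \eqref{XX rel}, \eqref{quadratic N} and \eqref{cubic N N-1} are preserved rather than merely preserved up to a sign. Everything else is a direct, relation-by-relation check.
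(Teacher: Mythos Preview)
The paper does not actually supply a proof of this proposition; it is stated without argument and used freely thereafter, so the implicit claim is that it is a routine relation-by-relation verification. Your proposal carries out exactly that verification, and the approach is the correct and expected one: check that each defining relation \eqref{KK rel}--\eqref{cubic N N-1} is mapped to (a consequence of) another defining relation, and note that the inverse maps are given by analogous formulas. Your key observations---that $u_{i,j,-t}=u_{i,j,t}$, that the $\pm$ on the scalar exponents in \eqref{cubic relation}--\eqref{cubic N N-1} is there precisely so that the upper- and lower-sign relations are swapped by $\eta$ and $\Omega$, and that $\tau_i$ acts by a homogeneous loop-degree shift---are the right ones, so the proposal is essentially complete modulo the bookkeeping you already flag.
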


\medskip
\noindent Our main result is the following theorem.

\begin{thm}{\label{thm_main}}
	There exists a surjective homomorphism $\psi_\s: U_q^D(\ghs) \rightarrow \Uqghs$ defined as follows:
	\begin{align*}
		                         & \psi_\s(X^+_{i,r})=(-1)^{ir}(T_{\omega_i}^{-r})_\s(E_i), \quad \psi_\s(X^-_{i,r})=(-1)^{ir}(T_{\omega_i}^{r})_\s(F_i),\\ &\psi_\s(K_i)=K_i,\quad \psi_\s(C)=K_C \quad i\in I, r\in \Z.
	\end{align*}
\end{thm}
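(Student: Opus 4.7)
My plan follows the strategy outlined in the introduction: factor $\psi_\s$ through an auxiliary superalgebra $\tilde{U}_q^D(\ghs)$ having the same current generators as $U_q^D(\ghs)$ but only a subset of the defining relations, so that the Serre-type relations \eqref{Serre1}--\eqref{cubic N N-1} are imposed only on $U_q^D(\ghs)$. I would first construct a map $\tilde{\psi}_\s: \tilde{U}_q^D(\ghs) \to \Uqghs$ by defining the images of the current generators via the braid-group automorphisms constructed in Section~\ref{sect-formulas}, namely $X^+_{i,r} \mapsto (-1)^{ir}(T_{\omega_i}^{-r})_\s(E_i)$, $X^-_{i,r} \mapsto (-1)^{ir}(T_{\omega_i}^{r})_\s(F_i)$, $K_i \mapsto K_i$, $C \mapsto K_C$, and defining $H_{i,t}$ inductively from the $X^\pm_{i,r}$ via relation \eqref{XX rel} evaluated at $i=j$. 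Because $T_{\omega_i}$ preserves parity and commutes with the parity grading, the images have the correct parities, and the central element $K_C$ is the natural candidate for $C$ given its appearance in the construction.

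Next, I would verify the relations \eqref{KK rel}--\eqref{XX rel} defining $\tilde{U}_q^D(\ghs)$. The $KK$, $KX$ relations follow from the fact that each $T_{\omega_i}$ is weight-preserving up to a shift of $\delta$, and from the commutation of $T_{\omega_i}$ with the Cartan part. The $HH$ and $HX$ relations, together with the $i=j$ case of \eqref{XX rel}, reduce to computations in the rank-one subalgebras generated by $E_i, F_i, K_i$ and the braid operator $T_{\omega_i}$; here I would mimic Beck's argument, using Proposition~\ref{automorphisms} and the identity $T_{\omega_{N-1}}^2 T_{\omega_N}^{-1}=T_N^{-1}T_{\omega_N}T_N^{-1}$ to reduce computations to type $A$ or to the rank-one situation of Case~5 in Section~\ref{sect-formulas}. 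The off-diagonal cases $i \neq j$ of \eqref{XX rel} would be handled by showing that $[T_{\omega_i}^{-r}(E_i), T_{\omega_j}^{s}(F_j)] = 0$, which follows from $[E_i,F_j]=0$ together with commutativity of $T_{\omega_i}$ and $T_{\omega_j}$. The relations \eqref{s1 rel} and \eqref{XX i not j} would be verified using the explicit type $A$ formulas \eqref{T_i type A 1}--\eqref{T_i type A 2} and the Cases 1--5 formulas for $T_{0,\s}, T_{1,\s}$; this is the technical heart of the preliminary step and proceeds by iterating $T_{\omega_i}$ and tracking the weights.

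The main obstacle is the verification that $\ker(\tilde{\phi}_\s) \subseteq \ker(\tilde{\psi}_\s)$, equivalently that the Serre-type relations \eqref{Serre1}--\eqref{cubic N N-1} hold in the image. I would reduce these via the braid action to a finite number of equalities in $\Uqghs$ among products of low-weight generators, just as Beck, Damiani, and Heckenberger--Yamane--Zhang do in related settings. The generic Serre relations \eqref{Serre1} and \eqref{Serre odd} follow from the Drinfeld--Jimbo Serre relations \eqref{Serre traditional}--\eqref{3 nodes relation} by applying $T_{\omega_i}$'s. However, the node-$N$ relations \eqref{Serre N}, \eqref{cubic relation}, \eqref{quadratic N}, \eqref{cubic N N-1} are more delicate because $\alpha_N$ is short non-isotropic (or even in some parities): here the fourth-order braid relation $T_N T_{N-1} T_N T_{N-1} = T_{N-1} T_N T_{N-1} T_N$ and the special rank-one formula \eqref{T1 CB1} force a quartic-in-$E_N$ relation that precisely produces the quadratic and cubic current relations after symmetrization in $z_1, z_2, z_3$. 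Concretely, I would compute the commutation of $T_{\omega_N}$-translated generators against $E_{N-1}, F_{N-1}$, expand symbolically, and match coefficients of $z_1^a z_2^b$; the expansion should close on the prescribed Symmetrized forms.

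Finally, surjectivity is the cleanest step: the generators $E_i, F_i, K_i$ for $i \in I$ are directly in the image as $\psi_\s(X^\pm_{i,0})$ and $\psi_\s(K_i)$. For the affine node, one writes $E_0$ (resp.\ $F_0$) as a bracket of the form $T_{\omega_j,\s}^{\mp 1}(E_j)$ or $T_{\omega_j,\s}^{\pm 1}(F_j)$ for an appropriate $j$, depending on whether $s_1=1$ (shape $DB_N$) or $s_1=-1$ (shape $CB_N$); in both cases, applying $T_{\omega_1}$ or $T_{\omega_2}$ once yields $E_0$ (resp.\ $F_0$) up to a nonzero scalar, which exhibits it as $\pm \psi_\s(X^\pm_{i,\mp 1})$ or a bracket thereof. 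This completes the proof modulo the braid-Serre verification identified above as the main difficulty.
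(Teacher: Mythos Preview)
Your overall architecture matches the paper's: introduce an intermediate algebra $\tilde U_q^D(\ghs)$, build $\tilde\psi_\s$ on it, and then show that the extra relations defining $U_q^D(\ghs)$ lie in $\ker\tilde\psi_\s$. Surjectivity is handled essentially as you describe.

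There is, however, a meaningful difference in how the work is partitioned, and the paper's choice removes exactly the obstacle you flag as the ``main difficulty.'' The paper's $\tilde U_q^D(\ghs)$ carries a \emph{much smaller} set of relations than you propose: only \eqref{KK rel}, \eqref{KX rel}, relation \eqref{HX rel} \emph{for $t=\pm 1$ only}, and \eqref{XX rel}. The ideal $\cI_\s$ one mods out by is generated only by the ``constant'' Serre elements $Sd_{i,j}^\pm$, $Se_{i,j}^\pm$, $So_i^\pm$, $S_N^\pm$ --- and showing that $\tilde\psi_\s(\cI_\s)=0$ is immediate, since each generator is the $T_{\omega_i}$-translate of a Drinfeld--Jimbo Serre relation \eqref{EE rel}, \eqref{Serre traditional}, or \eqref{Serre odd node}. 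The substantive lemma is then purely \emph{internal} to the Drinfeld side: one proves that in $\tilde U_q^D(\ghs)/\cI_\s$ the remaining relations \eqref{HH rel}, the full \eqref{HX rel}, \eqref{s1 rel}, \eqref{cubic relation}, \eqref{quadratic N}, \eqref{cubic N N-1} all hold \emph{formally}, by commuting the Serre elements with suitable $X^-_{j,1}$, $H_{i,\pm 1}$ and using the $\tilde\tau_i$-invariance of $\cI_\s$. In particular the node-$N$ relations \eqref{cubic relation}--\eqref{cubic N N-1} are never verified inside $\Uqghs$ at all: they are derived from $S_N^\pm(0,0)$ by bracketing with $X^-_{N-1,1}$ and $X^-_{N,\pm 1}$.

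Your plan instead places \eqref{HH rel}, the full \eqref{HX rel}, \eqref{s1 rel}, \eqref{XX i not j} among the relations of $\tilde U_q^D(\ghs)$ and then proposes to check \eqref{cubic relation}--\eqref{cubic N N-1} directly in $\Uqghs$ via explicit braid computations with $T_{\omega_N}^r(E_N)$. That route is in principle feasible but substantially harder; the paper's reorganization trades those Chevalley-side computations for short $q$-Jacobi manipulations on the current side, which is what makes the argument close.
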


Our strategy to prove Theorem \ref{thm_main} is as follows. First, we consider a larger superalgebra $\Tilde{U}_q^D(\ghs)$ with the same set of generators as $U_q^D(\ghs)$, but with fewer relations. It is easier to establish that the analogous homomorphism $\Tilde{\psi}_\s: \Tilde{U}_q^D(\ghs)\rightarrow \Uqghs$ is surjective. Then, we demonstrate that the ideal $\cI_\s$ of $\Tilde{U}_q^D(\ghs)$, such that $U_q^D(\ghs)\simeq \Tilde{U}_q^D(\ghs)/\cI_\s$, lies in the kernel of $\Tilde{\psi}_\s$.

\begin{dfn}
	The quantum superalgebra $\Tilde{U}_q^D(\ghs)$ is the unital associative superalgebra generated by $X^\pm_{i,r}, H_{i,t}$, $K^{\pm 1}_i, C^{\pm 1}$, where $i \in I,\; r,t\in \Z, t\neq 0$, subject to the relations below.
		
	The parity of the generators is given by $|X^\pm_{i,r}|=|i|$, and all remaining generators have parity $0$. 
	
	The defining relations are as follows:
	\begin{align}
		  & \text{$C$ is central},\quad K_iK_j=K_jK_i. \label{KK Util}                                                                                                      \\  
		  & K_iX^\pm_j(z)K_i^{-1}=q^{\pm A_{i,j}}X^\pm_j(z). \label{KX Util}                                                                                           \\
		  & [H_{i,t},X^{\pm}_j(z)]=\pm u_{i,j,t}C^{-(t\pm |t|)/2}z^t X^\pm_j(z), \quad \text{ for } t\in\{1,-1\}. \label{HX Util}                                           \\
		  & [X^+_i(z),X^-_j(w)]=\frac{\delta_{i,j}}{q-q^{-1}}\left(\delta\left(C\frac{w}{z}\right)K_i^+(w)-\delta\left(C\frac{z}{w}\right)K_i^-(z)\right) \label{XXK Util}. 
	\end{align}
\end{dfn}

Let $\Tilde\eta_\s, \Tilde\Omega_\s$, and $\Tilde\tau_{i,\s}$ be the endomorphisms of $\Tilde{U}_q^D(\ghs)$ defined by the same formulas as the maps $\eta_\s, \Omega_\s$, and $\tau_{i,\s}$ (see Proposition \ref{automorphisms}), respectively.

\begin{lem}\label{lem:surjectivity}
	There exists a surjective homomorphism $\Tilde{\psi}_\s: \Tilde{U}_q^D(\ghs)\rightarrow \Uqghs$ given by
	\begin{align*}
		                         & \Tilde{\psi}_\s(X^+_{i,r})=(-1)^{ir}(T_{\omega_i}^{-r})_\s(E_i), \quad \Tilde{\psi}_\s(X^-_{i,r})=(-1)^{ir}(T_{\omega_i}^{r})_\s(F_i), \\ &\Tilde{\psi}_\s(K_i)=K_i,\quad \Tilde{\psi}_\s(C)=K_C,\quad i\in I, r\in \Z.
	\end{align*}
\end{lem}
\begin{proof} 
	We check that $\Tilde{\psi}_\s$ is a homomorphism. 
	
	The relations \eqref{KK Util} follow from the relations \eqref{DJ kk} and the fact that $A_{i,0}+2\sum_{j=1}^N A_{i,j}=0$ when $s_1=-1$, and $A_{i,0}+A_{i,1}+2\sum_{j=2}^N A_{i,j}=0$ when $s_1=1$.
	
	The relations \eqref{KX Util} follow from the relations \eqref{DJ KE} and $T_{\omega_i,\s}(K_j)=C^{-\delta_{i,j}}K_j$.
    
The relations \eqref{HX Util}, and the relations \eqref{XXK Util}  are checked as in \cite{damiani2012, Yamane1999} with appropriate changes in the notation regarding the parities, except for the base case of the relation \eqref{HX Util} with $i=j$, which we establish directly below, and the case when $\gs$ is of type $B$,  $i=j= N$ and $|N|=1$, which we prove separately. The compatibility of $\Tilde{\psi}_\s$ with the equation \eqref{XXK Util} is then proved by induction, as in \cite[Lemma 5.8]{Heckenberger2007}.
	We first verify the relation \eqref{HX Util} with $i=j$, $(\alpha_i,\alpha_i)\neq 0$ and $|i|=0$, directly. For this, note that in type $A$ the operator $T_{\omega_1,\s}$ acts as $T_{\omega_1,\s}=\tau\, T_{1,\s}$, where $\tau$ is the diagram rotation $i\mapsto i+1$, so that $T_{\omega_1,\s}(F_1)=-K_0^{-1}E_0$. A direct computation then yields
	\begin{align}\label{HX Util base even}
		K_1^{-1}[[E_1,T_{\omega_1,\s}(F_1)],F_1]_{q^{-A_{1,1}}}=-[A_{1,1}]\,T_{\omega_1,\s}(F_1).
	\end{align}
	Combining \eqref{HX Util base even} with \eqref{K commutator 2}, we obtain, exactly as for the odd non-isotropic node below,
	\begin{align*}
		\Tilde{\psi}_\s([H_{i,1},X^-_{i,0}])
		&=\Tilde{\psi}_\s(K_i^{-1}[[X^+_{i,0},X^-_{i,1}],X^-_{i,0}]_{q^{-A_{i,i}}})\\
		&=(-1)^i K_i^{-1}[[E_i,T_{\omega_i,\s}(F_i)],F_i]_{q^{-A_{i,i}}}
		=-[A_{i,i}]\,\Tilde{\psi}_\s(X^-_{i,1}).
	\end{align*}

	We now check the relations \eqref{HX Util} with $X^-_{j,r}$ and $i\neq j$. The case of $X^+_{j,r}$ is analogous. Since $T_{\omega_i,\s} \Tilde{\psi}_\s=\Tilde{\psi}_\s \Tilde{\tau}_{i,\s}$, it is sufficient to consider $r=0$.
 Note that by the definition of $T_{\omega_i,\s}$, we have:
	\begin{align*}
		T_{\omega_i,\s}(F_iF_j-(-1)^{|i||j|}q^{-A_{i,j}}F_jF_i)=(-1)^{|i||j|}T_{\omega_j,\s}(F_jF_i-(-1)^{|i||j|}q^{-A_{i,j}}F_iF_j). 
	\end{align*}
	
	\noindent In addition, the element $H_{i,1}$ can be written as $H_{i,1}=K_i^{-1}[X^+_{i,0}, X^-_{i,1}]$. 
	
	\noindent Applying $\Tilde{\psi}_\s$ and subsequently \eqref{K commutator 2} and \eqref{q Jacobi 1} we get
	\begin{align*}
		\Tilde{\psi}_\s([H_{i,1},X^-_{j,0}]) & =\Tilde{\psi}_\s([K_i^{-1}[X^+_{i,0}, X^-_{i,1}],X^-_{j,0}])=(-1)^i [K_i^{-1}[E_{i}, T_{\omega_i,\s}(F_{i})],F_j]                                    \\
		                                     & =(-1)^i K_i^{-1}[E_i, T_{\omega_i,\s}(F_iF_j-(-1)^{|i||j|}q^{-A_{i,j}}F_jF_i)]           \\
		                                     & =(-1)^{i+|i||j|} K_i^{-1} [E_i, T_{\omega_j,\s}(F_jF_i-(-1)^{|i||j|}q^{-A_{i,j}}F_iF_j)] \\
		                                     & =(-1)^{i+|i||j|} T_{\omega_j,\s}(K_i^{-1} [E_i, F_jF_i-(-1)^{|i||j|}q^{-A_{i,j}}F_iF_j]) \\
		                                     & =(-1)^{i} [A_{i,j}]T_{\omega_j,\s}(F_j)           =-[A_{i,j}]\Tilde{\psi}_\s(X^-_{j,1}).                                                   
	\end{align*}

	It remains to check the relation \eqref{HX Util}  when $\gs$ is of type $B$,  $i=j= N$ and $|N|=1$.	For this, note that in $U_q(\osp(1,2)^{(1)})$, we have $T_{\omega_1}(F_1)=T_0 T_1(F_1)$. Thus, by a direct computation:
	\begin{align*}
		K_1^{-1} [[E_1,T_{\omega_1}(F_1)], F_1]_{q^{-A_{1,1}}}
		  & =-u_{1,1,1}\frac{q^{-A_{1,1}} }{q+q^{-1}}[E_0, E_1]_{q^{A_{0,1}}} K_1^{-1} K_0^{-1} \\
		  & =-u_{1,1,1} T_{\omega_1}(F_1).                                                      
	\end{align*}
	
	\noindent Combining it with \eqref{K commutator 2} we obtain
	\begin{align*}
		\Tilde{\psi}_\s( [H_{N,1},X^-_{N,0}])
		  & = \Tilde{\psi}_\s( K_N^{-1}[[X^+_{N,0},X^-_{N,1}],X^-          _{N,0}]_{q^{-A_{N,N}}})\\
		  & =(-1)^N K_N^{-1}[[E_N,T_{\omega_N}(F_N)], F_N]_{q^{-A_{N,N}}}                           \\
		  & =-(-1)^N u_{N,N,1} T_{\omega_N}(F_N) =-u_{N,N,1}\Tilde{\psi}_\s(X^-_{N,1}).                                                  
	\end{align*}


	It is clear that $E_i, F_i, K_i^{\pm1}\in \Im{\Tilde{\psi}_\s}$ for $i\in I$. Also, we have that $(K_1\cdots K_N)^{-2}\Tilde{\psi}_\s(C)=K_0\in \Im{\Tilde{\psi}_\s}$ when $s_1=-1$, and $K^{-1}_1(K_2\cdots K_N)^{-2}\Tilde{\psi}_\s(C)=K_0\in \Im{\Tilde{\psi}_\s}$ when $s_1=1$.
Since $\Im{\Tilde{\psi}_\s}$ is $T_{\omega_i,\s}$-stable, and $(T_1T_2\cdots T_{N-1}T_N T_{N-1}\cdots T_2T_1)_{\s}^{-1}(F_1)\in \Im{\Tilde{\psi}_\s}$, we obtain $T_{0,\s}(F_1)\in \Im{\Tilde{\psi}_\s}$.
	
\noindent	If $s_1=1$, then $T_{0,\s}(F_1)=F_0$, and we are done.
	
\noindent	If $s_1=-1$, then $T_{0,\s}(F_1)=-\lb{F_1,F_0}$ and $[E_1,\lb{F_1,F_0}]K_1=(q+q^{-1})F_0 \in \Im{\Tilde{\psi}_\s} $.

\noindent The proof that $E_0 \in \Im{\Tilde{\psi}_\s}$ is analogous.

\end{proof}

To simplify the notation, define the following elements of $\Tilde{U}_q^D(\ghs)$:
\begin{align*}
	  & Sd_{i,j}^\pm(r,s)=[X^\pm_{i,r},X^\pm_{j,s}],                                          \\
	  & Se_{i,j}^\pm(r,s)=\lb{X^\pm_{i,r},\lb{\cdots\lb{X^\pm_{i,r},X^\pm_{j,s}}}}, \text{ where } X^\pm_{i,r} \text{ appears } 1-\frac{2A_{i,j}}{A_{i,i}} \,\text{times}.                    \\
	  & So_i^\pm(r,s,t)=\lb{X^\pm_{i,r},\lb{X^\pm_{i+1,s},\lb{X^\pm_{i,r},X^\pm_{i-1,t}}}}, \\
	  & S_N^\pm(r,s)=\lb{X^\pm_{N,r},\lb{X^\pm_{N,r},\lb{X^\pm_{N,r},X^\pm_{N-1,s}}}}, \\
        & S_{Do}^\pm(r,s,t)=\lb{\lb{X^\pm_{N-2,r},X^\pm_{N-1,s}},\lb{\lb{X^\pm_{N-2,r},X^\pm_{N-1,s}},\lb{X^\pm_{N-1,s},X^\pm_{N,t}}}} \\
        & S_{De}^\pm(r,s,t,p)=\lb{\lb{\lb{\lb{\lb{\lb{X^\pm_{N-3,r},X^\pm_{n-2,s}},X^\pm_{N-1,t}},X^\pm_{N,p}},X^\pm_{N-1,t}},X^\pm_{N-2,s}},X^\pm_{N-1,t}} \\
   & S_{T}^\pm(r,s,t)=\lb{\lb{X^\pm_{N-2,r},X^\pm_{N-1,s}},X^\pm_{N,t}}-\lb{\lb{X^\pm_{N-2,r},X^\pm_{N,t}},X^\pm_{N-1,s}}
\end{align*}
for all $r,s, t,p\in \Z$, and $i,j,k,l\in I$.

\noindent Let $\cI_\s$ be the two-sided ideal of $\Tilde{U}_q^D(\ghs)$ generated by
\begin{align*}
	  & Sd_{i,j}^\pm(r,s),\, \text{for all $r,s\in \Z$, and $i,j\in I$, such that $A_{i,j}= 0$,}                               \\ 
	  & Se_{i,j}^\pm(r,s),\, \text{for all $r,s\in \Z$, and $i,j\in I$, $i\neq N$, such that $A_{i,j}A_{i,i}\neq 0$,} \\ 
	  & So_i^\pm(r,s,t),\, \text{for all $r,s, t\in \Z$, and $i\in I$, $i\neq 1, N$, such that $A_{i,i}= 0$},                 \\ 
	  & S_N^\pm(r,s),\, \text{for all $r,s\in \Z$},                                       \\ 
        & S_{Do}^\pm(r,s,t), \,\text{for all $r,s,t\in \Z$}, \text{if the diagram present in \eqref{Serre type D odd} occurs},\\
        & S_{De}^\pm(r,s,t,p),  \,\text{for all $r,s,t,p\in \Z$}, \text{if the diagram present in \eqref{Serre type D even} occurs},\\
	  & S_{T}^\pm(r,s),\, \text{for all $r,s\in \Z$},  \text{if } A_{i,j} A_{i,k} A_{j,k}\neq 0,\; A_{i,j} + A_{i,k} + A_{j,k}= 0,\; \text{and }\, |i||j|+|j||k|+|i||k|\equiv 1.                           
\end{align*}

\begin{lem}
	$\Tilde{\psi}_\s(\cI_\s)=0$.	
	\begin{proof}
		Recall that $T_{\omega_i,\s}(E_j)=E_j$ and $T_{\omega_i,\s}(F_j)=F_j$. Let us show that $Sd_{i,j}^+(r,s)\in \Ker \Tilde{\psi}_\s$.  First, suppose $i\neq j$ and $A_{i,j}^\s=0$. Then, using the relation \eqref{EE rel}, we have 
		\begin{align*}
			\Tilde{\psi}_\s(Sd_{i,j}^+(r,s))=\Tilde{\psi}_\s([X^+_{i,r},X^+_{j,s}])=(-1)^{ri+sj}(T_{\omega_i,\s}^{-r}T_{\omega_j,\s}^{-s})([E_i,E_j])=0.
		\end{align*}
  Now, suppose that $i=j$ and $A_{i,i}^\s=0$. This implies that $|i|=1$ and $2E_i^2=[E_i,E_i]=0$. Then, the previous computation with $r=s$ shows that $Sd_{i,i}^+(r,r)\in \Ker \Tilde{\psi}_\s$. To conclude that $Sd_{i,i}^+(r,s)\in \Ker \Tilde{\psi}_\s$, for all $r,s\in \Z$, we commute $Sd_{i,i}^+(r,r)$ with $H_{i\pm 1,1}$ and use induction.

		The proof that all other generators of $\cI_\s$ lie in the kernel of $\Tilde{\psi}_\s$ is similar. To show that $Se_{i,j}^\pm(r,s)$ and $S_N^\pm(r,s)$ are in $\Ker \Tilde{\psi}_\s$,  we use the relation \eqref{Serre traditional}. For $So_i^\pm(r,s,t)$, we use \eqref{Serre odd node}. For $S_{Do}^\pm(r,s,t)$, we use \eqref{Serre DJ Sdo}.  For $S_{De}^\pm(r,s,t,p)$, we use \eqref{Serre DJ Sde}, while for $S_{T}^\pm(r,s)$, we use \eqref{Serre triangle}.
		           
	\end{proof}  
\end{lem}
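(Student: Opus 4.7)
The strategy is to reduce each generator of $\cI_\s$, after applying $\Tilde{\psi}_\s$, to a classical Serre relation in $\Uqghs$. The key observation is that the automorphisms $T_{\omega_i,\s}$ pairwise commute and satisfy $T_{\omega_i,\s}(E_j)=E_j$, $T_{\omega_i,\s}(F_j)=F_j$ whenever $i\neq j$. Hence, if a monomial $Y$ in the Drinfeld generators $X^\pm_{i_\ell,r_\ell}$ involves only pairwise distinct node labels $i_\ell$, then $\Tilde{\psi}_\s(Y)$ equals, up to a sign, the single composition $(T^{\mp r_1}_{\omega_{i_1}}\cdots T^{\mp r_k}_{\omega_{i_k}})_\s$ applied to the corresponding monomial in the Chevalley generators. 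Because $\delta$ is orthogonal to every root, the $q$-bracket $\lb{\cdot,\cdot}$ depends only on the finite part of the weight and is therefore preserved by each $T_{\omega_i,\s}$.

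With this observation I would treat the four families in turn. For $Sd^\pm_{i,j}(r,s)$ with $i\neq j$ and $A^\s_{i,j}=0$, the image is an automorphism applied to $[E_i,E_j]$ (or the $F$-analogue), which vanishes by \eqref{EE rel}. For $Se^\pm_{i,j}(r,s)$, both copies of $X^\pm_{i,r}$ carry the same $r$-index on the same node $i$, so the image is an automorphism applied to $\lb{E_i,\lb{E_i,E_j}}$, which vanishes by the appropriate case of \eqref{Serre traditional}. In the same way $So^\pm_i(r,s,t)$ and $S^\pm_N(r,s)$ reduce to \eqref{Serre odd node} and the three-fold case of \eqref{Serre traditional} (for the latter, $1-2A^\s_{N,N-1}/A^\s_{N,N}=3$). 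In each family the repeated Drinfeld indices sit on the same node with the same $r$-value, so the reduction lands precisely on a defining relation of $\Uqghs$.

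The main obstacle is the remaining case $Sd^\pm_{i,i}(r,s)$ with $A^\s_{i,i}=0$ (hence $|\alpha_i|=1$) and $r\neq s$, where the two operators $T^{\mp r}_{\omega_i}$ and $T^{\mp s}_{\omega_i}$ have different twists. For $r=s$ the image equals $(T^{\mp r}_{\omega_i})_\s(2E_i^2)$, which vanishes because $[E_i,E_i]=2E_i^2$ is zero by \eqref{EE rel}. For $r\neq s$ I argue by induction on $|r-s|$: relation \eqref{HX Util} together with the homomorphism property of $\Tilde{\psi}_\s$ shows that for any neighbour $j$ of $i$ with $A^\s_{i,j}\neq 0$ and $|\alpha_j|=0$, the operator $\mathrm{ad}\,\Tilde{\psi}_\s(H_{j,1})$ shifts the index of $E_i^{(r)}:=(T^{-r}_{\omega_i})_\s(E_i)$ by $+1$ up to an explicit nonzero scalar. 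Applying this derivation to the identity $[E_i^{(r)},E_i^{(r)}]=0$ already established and using the super-Jacobi identity (with $|j|=0$ and $|i|=1$, so that the two resulting terms coincide) yields $[E_i^{(r+1)},E_i^{(r)}]=0$; iterating gives $[E_i^{(r+k)},E_i^{(r)}]=0$ for every $k\geq 1$. The $k<0$ case follows by super-symmetry of the bracket, and the corresponding statement for $Sd^-$ follows by applying the automorphism $\Omega$ from Proposition \ref{automorphisms}, or by the symmetric argument using $H_{j,-1}$ and $F_i$.
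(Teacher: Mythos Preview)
Your approach is essentially the paper's own: reduce each generator of $\cI_\s$ via the commuting automorphisms $T_{\omega_i,\s}$ to a Chevalley Serre relation, and handle the exceptional case $Sd^\pm_{i,i}(r,s)$ with $r\neq s$ by an induction driven by $\mathrm{ad}\,H_{j,1}$ for a neighbouring node $j$.

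One small slip: in the $Sd^\pm_{i,i}$ step you require a neighbour $j$ with $|\alpha_j|=0$. Such a $j$ need not exist (e.g.\ $N=2$, $\s=(1,-1)$: the unique neighbour of the isotropic node $1$ is $N=2$, which is odd). What your Jacobi argument actually uses is that $|H_{j,1}|=0$, and this holds for every $j\in I$ because all the $H$-generators are even by definition. So drop the hypothesis $|\alpha_j|=0$ and keep only $A^{\s}_{i,j}\neq 0$; then relation \eqref{HX Util} gives $[H_{j,1},X^+_{i,r}]=[A^{\s}_{j,i}]_q\,C^{-1}X^+_{i,r+1}$ with a nonzero scalar, and since $H_{j,1}$ is even the super-Jacobi identity yields
\[
0=[\Tilde{\psi}_\s(H_{j,1}),\Tilde{\psi}_\s(Sd^+_{i,i}(r,s))]
\ \propto\ \Tilde{\psi}_\s(Sd^+_{i,i}(r+1,s))+\Tilde{\psi}_\s(Sd^+_{i,i}(r,s+1)),
\]
and the induction on $|r-s|$ proceeds exactly as you described (using that the odd supercommutator is symmetric, so $Sd^+_{i,i}(a,b)=Sd^+_{i,i}(b,a)$). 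With this correction your proof matches the paper's.
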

 Note that $\cI_\s$ is invariant under the action of $\Tilde\eta,\, \Tilde\Omega$, and $\Tilde\tau_i$. 

\begin{lem}\label{lem_quo}
	$U_q^D(\ghs)\simeq \Tilde{U}_q^D(\ghs)/\cI_\s$.
	
	\begin{proof}
		The proof follows the same strategy used in \cite{damiani2012}. 
		It is clear that all generators of $\cI_\s$ vanish in $U_q^D(\ghs)$, in particular the relations \eqref{XX i not j} belong to $\cI_\s$. Moreover, relations \eqref{KK rel}, \eqref{KX rel}, and \eqref{XX rel} coincide with those of \eqref{KK Util}, \eqref{KX Util}, and \eqref{XXK Util}, respectively.

  We will show that the remaining  relations, in the definition of $U_q^D(\ghs)$, can be deduced from the defining relations of $\Tilde{U}_q^D(\ghs)$ and $\cI_\s$ in the following order. We first check \eqref{Serre1} and \eqref{Serre odd}, after that we show \eqref{s1 rel} and   \eqref{cubic relation}. Next, we use  \eqref{cubic relation} to show \eqref{quadratic N}. After that, employing 
  \eqref{s1 rel} and \eqref{quadratic N}, we obtain the relations \eqref{HX rel} and then use it to obtain \eqref{HH rel}. We finish showing 
  the relation \eqref{cubic N N-1}.

   The relations \eqref{Serre1} and \eqref{Serre odd} are obtained by induction, after commuting the generators $Se_{i,j}^\pm(r,s)$, $So_i^\pm(r,s,t)$, and $S_N^\pm(r,s)$ with $H_{i,1}$, for appropriate choices of $i\in I$, and using the fact that $\cI_\s$ is invariant under the action of $\Tilde\tau_i$ for all $i\in I$.

	 Next, we show that relations \eqref{s1 rel} hold in $\Tilde{U}_q^D(\ghs)/\cI_\s$. The proof splits into five cases.

 \noindent Case $1$. Suppose $|i|=0$, $j=i\pm 1$ and, if $\g_\s$ is of type B, $i\neq N$. Then, in $\Tilde{U}_q^D(\ghs)/\cI_\s$, we have:
\begin{align*}
			0=Se_{i,j}^+(0,s)=\lb{X^+_{i,0},\lb{X^+_{i,0},X^+_{j,s}}}=\lb{\lb{X^+_{j,s},X^+_{i,0}},X^+_{i,0}}=[[X^+_{j,s},X^+_{i,0}]_{q^{-A_{i,j}}},X^+_{i,0}]_{q^{-A_{i,j}-A_{i,i}}}. 
		\end{align*}
		
        \noindent Commuting $Se_{i,j}^+(0,s)$ with $X^-_{i,1}$ and using the relations \eqref{q Jacobi 1}, \eqref{XXK Util}, \eqref{K commutator 1}, and \eqref{K commutator 2}, we obtain:
		\begin{align*}
			0 & =[[[X^+_{j,s},X^+_{i,0}]_{q^{-A_{i,j}}},X^+_{i,0}]_{q^{-A_{i,j}-A_{i,i}}},X^-_{i,1}]                                                                                           \\
			  & =[[X^+_{j,s},X^+_{i,0}]_{q^{-A_{i,j}}},[X^+_{i,0},X^-_{i,1}]]_{q^{-A_{i,j}-A_{i,i}}} +     [[[X^+_{j,s},X^+_{i,0}]_{q^{-A_{i,j}}},X^-_{i,1}],X^+_{i,0}]_{q^{-A_{i,j}-A_{i,i}}} \\
			  & =[[X^+_{j,s},X^+_{i,0}]_{q^{-A_{i,j}}},[X^+_{i,0},X^-_{i,1}]]_{q^{-A_{i,j}-A_{i,i}}} + [[X^+_{j,s},  [X^+_{i,0},X^-_{i,1}]]_{q^{-A_{i,j}}},X^+_{i,0}]_{q^{-A_{i,j}-A_{i,i}}}   \\
			  & =[[X^+_{j,s},X^+_{i,0}]_{q^{-A_{i,j}}},K_i H_{i,1}]_{q^{-A_{i,j}-A_{i,i}}} + [[X^+_{j,s},K_i  H_{i,1}]_{q^{-A_{i,j}}},X^+_{i,0}]_{q^{-A_{i,j}-A_{i,i}}}                        \\
			  & =q^{-A_{i,j}}K_i\Big(q^{-A_{i,i}}[[X^+_{j,s},X^+_{i,0}]_{q^{-A_{i,j}}}, H_{i,1}]  +   [[X^+_{j,s}, H_{i,1}],X^+_{i,0}]_{q^{-A_{i,j}-2A_{i,i}}}              \Big).             
		\end{align*}
		
		\noindent Next, we use the relations \eqref{q Jacobi 1} and \eqref{HX Util} to obtain:
		\begin{align*}
			0 & =q^{-A_{i,i}}C\big([X^+_{j,s},[X^+_{i,0}, H_{i,1}]]_{q^{-A_{i,j}}} +[[X^+_{j,s},H_{i,1}], X^+_{i,0}]_{q^{-A_{i,j}}}\big)  -  [A_{i,j}]_q  [X^+_{j,s+1},X^+_{i,0}]_{q^{-A_{i,j}-2A_{i,i}}}       \\
			  & =-q^{-A_{i,i}}[A_{i,i}]_q[X^+_{j,s},X^+_{i,1}]_{q^{-A_{i,j}}} -[A_{i,j}]_q\left(q^{-A_{i,i}}[X^+_{j,s+1}, X^+_{i,0}]_{q^{-A_{i,j}}}  +   [X^+_{j,s+1},X^+_{i,0}]_{q^{-A_{i,j}-2A_{i,i}}}\right) \\
			  & =-q^{-A_{i,i}}[A_{i,i}]_q[X^+_{j,s},X^+_{i,1}]_{q^{-A_{i,j}}} -[A_{i,j}]_q(1+q^{-A_{i,i}})\big(X^+_{j,s+1}X^+_{i,0}-    q^{-A_{i,j}-A_{i,i}}  X^+_{i,0}X^+_{j,s+1}  \big).                      
		\end{align*}
		        
		By hypothesis $|i|=0$, $j=i\pm 1$, and $i\neq N$ if $\g_\s$ is of type B. Then, we have that $A_{i,i}=-2A_{i,j}$ and $[A_{i,i}]_q=-( q^{A_{i,j}}+q^{-A_{i,j}})[A_{i,j}]_q$. Thus, 
		\begin{align*}
			\lb{X^+_{j,s},X^+_{i,1}} +\lb{X^+_{i,0},X^+_{j,s+1}}=0. 
		\end{align*}
		
		\noindent Since $\cI_\s$ is $\Tilde\tau_i$-invariant, we conclude that 
		\begin{align}\label{ij relation even}
			\lb{X^+_{j,s},X^+_{i,r+1}} +\lb{X^+_{i,r},X^+_{j,s+1}}=0.
		\end{align}

		\noindent Case $2$. $|i|=0$ and $i=j\neq N$. Let $\ell=i\pm 1$. Then,  in $\Tilde{U}_q^D(\ghs)/\cI_\s$ we have:
		\begin{align*}
			0=\lb{X^\pm_{i,0},\lb{X^\pm_{i,0},X^\pm_{\ell,1}}}=[X^\pm_{i,0},[X^\pm_{i,0},X^\pm_{\ell,1}]_{q^{-A_{i,\ell}}}]_{q^{-A_{i,\ell}-A_{i,i}}}. 
		\end{align*}
		Then, using the equations \eqref{q Jacobi 1}, \eqref{XXK Util}, and \eqref{HX Util}, we get:
		\begin{align*}
			0 & =[[X^+_{i,0},[X^+_{i,0},X^+_{\ell,1}]_{q^{-A_{i,\ell}}}]_{q^{-A_{i,\ell}-A_{i,i}}},X^-_{\ell,0}] \\
			  & =C[X^+_{i,0}, [X^+_{i,0},K_\ell H_{\ell,1}]_{q^{-A_{i,\ell}}}]_{q^{-A_{i,\ell}-A_{i,i}}}         \\
			  & =C[X^+_{i,0}, [X^+_{i,0},H_{\ell,1}]K_\ell]_{q^{-A_{i,\ell}-A_{i,i}}}                            \\
			  & =-[A_{i,\ell}]_q[X^+_{i,0}, X^+_{i,1}K_\ell]_{q^{-A_{i,\ell}-A_{i,i}}}                           \\ 
			  & =-[A_{i,\ell}]_q[X^+_{i,0}, X^+_{i,1}]_{q^{-A_{i,i}}}K_\ell .                                     
		\end{align*}
		This implies:
		\begin{align*}
			[X^+_{i,0}, X^+_{i,1}]_{q^{-A_{i,i}}}=\lb{X^+_{i,0}, X^+_{i,1}}=0. 
		\end{align*}
		By commuting this last equation with $H_{i,1}$, using \eqref{HX Util}, and induction, we conclude that:
		\begin{align}\label{ii relation}
			\lb{X^+_{i,r}, X^+_{i,s+1}} + \lb{X^+_{i,s}, X^+_{i,r+1}}=0. 
		\end{align}
		
		\noindent Case $3$. $|i|=1$, $i\neq j$, and $i,j\neq N$ if $\g$ is of type $D$. This case involves longer calculations  and to make
  the exposition clearer  we break longer expressions into parts. However, the strategy is similar.
		First, note that:
		\begin{align*}
			So_i^\pm(0,0,0) & =\lb{X^+_{i,0},\lb{X^+_{i+1,0},\lb{X^+_{i,0},X^+_{i-1,0}}}}                                                                                    \\
			                & =X^+_{i,0} X^+_{i+1,0} X^+_{i,0} X^+_{i-1,0} +(-1)^{|i-1|+|i+1|+|i-1||i+1|} X^+_{i,0} X^+_{i-1,0} X^+_{i,0} X^+_{i+1,0}+                       \\
			                & \hphantom{==}+(-1)^{|i-1|+|i+1|} X^+_{i+1,0} X^+_{i,0} X^+_{i-1,0} X^+_{i,0} + (-1)^{|i-1||i+1|} X^+_{i-1,0} X^+_{i,0} X^+_{i+1,0} X^+_{i,0} - \\
			                & \hphantom{==}-(-1)^{|i-1|}(q^{-A_{i,i-1}}+q^{-A_{i,i+1}}) X^+_{i,0} X^+_{i+1,0} X^+_{i-1,0} X^+_{i,0}.                                         
		\end{align*}
		
		\noindent Next, we commute each monomial with $X^-_{i-1,1}$:
		\begin{align*}
			  & [X^+_{i,0} X^+_{i+1,0} X^+_{i,0} X^+_{i-1,0},X^-_{i-1,1}]=X^+_{i,0} X^+_{i+1,0} X^+_{i,0} H_{i-1,1}K_1,                                                                                              \\
			  & [X^+_{i,0} X^+_{i-1,0} X^+_{i,0} X^+_{i+1,0},X^-_{i-1,1}]=(-1)^{|i-1|+|i-1||i+1|}C^{-1}q^{A_{i-1,i}}[A_{i-1,i}]_qX^+_{i,0}  X^+_{i,1} X^+_{i+1,0}K_1,                                                 \\
			  & [X^+_{i+1,0} X^+_{i,0} X^+_{i-1,0} X^+_{i,0},X^-_{i-1,1}]=(-1)^{|i-1|}C^{-1}q^{A_{i-1,i}}[A_{i-1,i}]_qX^+_{i+1,0}  X^+_{i,0} X^+_{i,1}K_1,                                                           \\
			  & [X^+_{i-1,0} X^+_{i,0} X^+_{i+1,0} X^+_{i,0},X^-_{i-1,1}]=(-1)^{|i-1||i+1|}q^{2A_{i-1,i}}\big([A_{i-1,i}]_qC^{-1}(X^+_{i,1}  X^+_{i+1,0} X^+_{i,0} + X^+_{i,0}  X^+_{i+1,0} X^+_{i,1})+              \\
			  & \hphantom{[X^+_{i-1,0} X^+_{i,0} X^+_{i+1,0} X^+_{i,0},X^-_{i-1,1}]==}+ X^+_{i,0}  X^+_{i+1,0} X^+_{i,0} H_{i-1,1}    \big)K_1 ,                                                                      \\
			  & [X^+_{i,0} X^+_{i+1,0} X^+_{i-1,0} X^+_{i,0},X^-_{i-1,1}]=(-1)^{|i-1|}q^{A_{i-1,i}}\big(C^{-1}[A_{i-1,i}]_q X^+_{i,0}  X^+_{i+1,0} X^+_{i,1} +X^+_{i,0}  X^+_{i+1,0} X^+_{i,0} H_{i-1,1}    \big)K_1 .\\
		\end{align*}
		
		\noindent Summing up and simplifying we prove that 
		\begin{align*}
			[So_i^\pm(0,0,0),X^-_{i-1,1}]=C^{-1}q^{A_{i-1,i}}[A_{i-1,i}]_q\big((-1)^{|i+1|}X^+_{i,0}X^+_{i,1}X^+_{i+1,0} + (-1)^{|i+1|}X^+_{i+1,0}X^+_{i,0}X^+_{i,1} \\ +q^{A_{i-1,i}}X^+_{i,1}X^+_{i+1,0}X^+_{i,0} -q^{-A_{i-1,i}}X^+_{i,0}X^+_{i+1,0}X^+_{i,1} \big)K_{i-1},
		\end{align*}
		and the element $Y$ belongs to $\cI_\s$,
		\begin{align*}
			Y:= (-1)^{|i+1|}X^+_{i,0}X^+_{i,1}X^+_{i+1,0} + (-1)^{|i+1|}X^+_{i+1,0}X^+_{i,0}X^+_{i,1}+ &             \\
			+q^{A_{i-1,i}}X^+_{i,1}X^+_{i+1,0}X^+_{i,0} -q^{-A_{i-1,i}}X^+_{i,0}X^+_{i+1,0}X^+_{i,1}.   & 
		\end{align*}
		Now, we commute $Y$ with $X^-_{i,0}$.
		\begin{align*}
			  & [X^+_{i,0}X^+_{i,1}X^+_{i+1,0},X^-_{i,0}]=(-1)^{|i+1|}\big(q^{A_{i,i+1}}CX^+_{i,0}X^+_{i+1,0}H_{i,1}K_i +q^{A_{i,i+1}}[A_{i,i+1}]X^+_{i,0}X^+_{i+1,1}K_i +        \\
			  & \hphantom{[X^+_{i,0}X^+_{i,1}X^+_{i+1,0},X^-_{i,0}]==}+X^+_{i,1}X^+_{i+1,0}\frac{q^{-A_{i,i+1}}K_{i}^{-1} - q^{A_{i,i+1}}K_{i}}{q-q^{-1}}\Big) ,                   \\
			  & [X^+_{i+1,0}X^+_{i,0}X^+_{i,1},X^-_{i,0}]=CX^+_{i+1,0}X^+_{i,0}H_{i,1}K_i -                                                                                       
			X^+_{i+1,0}X^+_{i,1}\frac{K_{i} - K_{i}^{-1}}{q-q^{-1}},\\
			  & [X^+_{i,1}X^+_{i+1,0}X^+_{i,0},X^-_{i,0}]=-(-1)^{|i+1|}q^{A_{i,i+1}}CX^+_{i+1,0}X^+_{i,0}H_{i,1}K_i -(-1)^{|i+1|}q^{A_{i,i+1}}[A_{i,i+1}]X^+_{i+1,1}X^+_{i,0}K_i+ \\
			  & \hphantom{[X^+_{i,1}X^+_{i+1,0}X^+_{i,0},X^-_{i,0}]==}+X^+_{i,1}X^+_{i+1,0}\frac{K_{i} - K_{i}^{-1}}{q-q^{-1}},                                                    \\
			  & [X^+_{i,0}X^+_{i+1,0}X^+_{i,1},X^-_{i,0}]=                                                                                                                        
			CX^+_{i,0}X^+_{i+1,0}H_{i,1}K_i + (-1)^{|i+1|}X^+_{i+1,0}X^+_{i,1}\frac{q^{-A_{i,i+1}}K_{i}^{-1} - q^{A_{i,i+1}}K_{i}}{q-q^{-1}} .   
		\end{align*}
		
		\noindent Putting the last four expressions together and simplifying,
		\begin{align*}
			[Y,X^-_{i,0}] & =q^{A_{i,i+1}}[A_{i,i+1}]_q\big(X^+_{i,0}X^+_{i+1,1} - (-1)^{|i+1|}q^{-A_{i,i+1}}X^+_{i+1,1}X^+_{i,0}        \\
			              & \hphantom{==}+(-1)^{|i+1|}(X^+_{i+1,0}X^+_{i,1}- (-1)^{|i+1|}q^{-A_{i,i+1}}X^+_{i,1}X^+_{i+1,0})\big)K_{i}   \\
			              & =q^{A_{i,i+1}}[A_{i,i+1}]_q\big(\lb{X^+_{i,0}X^+_{i+1,1}}  +(-1)^{|i+1|}\lb{X^+_{i+1,0}X^+_{i,1}}\big)K_{i}. 
		\end{align*}
		
		\noindent Thus, $\lb{X^+_{i,0},X^+_{i+1,1}}  +(-1)^{|i+1|}\lb{X^+_{i+1,0},X^+_{i,1}}=0$.
	 The invariance of $\cI_\s$ under the action of $\Tilde\tau_i$ and $\Tilde\tau_{i+1}$ implies that
		\begin{align}\label{i i+1 relation odd}
			\lb{X^+_{i,r},X^+_{i+1,s+1}}  +(-1)^{|i+1|}\lb{X^+_{i+1,s},X^+_{i,r+1}}=0. 
		\end{align}
		
		Similarly, if we commute $So_i^\pm(0,0,0)$ with $X^-_{i+1,1}$ instead of  $X^-_{i-1,1}$ and repeat the previous computations, we obtain:
		\begin{align}\label{i i-1 relation odd}
			\lb{X^+_{i,r},X^+_{i-1,s+1}}  +(-1)^{|i-1|}\lb{X^+_{i-1,s},X^+_{i,r+1}}=0. 
		\end{align}

\noindent Case 4. $|i|=1$, $i=N$, $j=N-1$, and $\g$ is of type $D$. That is, the $(N-2,N)$ section of the Dynkin diagram has the same shape as in Figure \ref{XD odd}. Then,  in $\Tilde{U}_q^D(\ghs)/\cI_\s$:
\begin{align*}
			0=S_{T}^+(0,0,0)
                &=\lb{\lb{X^+_{N-2,0},X^+_{N-1,0}},X^+_{N,0}}-\lb{\lb{X^+_{N-2,0},X^+_{N,0}},X^+_{N-1,0}}\\
                &=[[X^+_{N-2,0},X^+_{N-1,0}]_{q^{-1}},X^+_{N,0}]_{q}-[[X^+_{N-2,0},X^+_{N,0}]_{q^{-1}},X^+_{N-1,0}]_{q}. 
		\end{align*}

    Now, we commute $S_{T}^+(0,0,0)$ with $X^-_{N-2,1}$ and use the relations \eqref{q Jacobi 1}, \eqref{XXK Util}, and \eqref{K commutator 1}, to obtain:
		\begin{align*}
			0 & =[S_{Ts}^+(0,0,0),X^-_{N-2,1}]                                                                                           \\
			  & = K_{N-2}\left( [[H_{N-2,1},X^+_{N-1,0}]_{q^{-2}},X^+_{N,0}]  - [[H_{N-2,1},X^+_{N,0}]_{q^{-2}},X^+_{N-1,0}] \right), 
		\end{align*}
which, after expanding, moving the $H_{N-2,1}$ in each summand to the left with the help of equation \eqref{HX Util}, multiplying by $K_{N-2}^{-1}$, yields
\begin{align*}
			\lb{X^+_{N-1,0},X^+_{N,1}}  -\lb{X^+_{N,0},X^+_{N-1,1}}=0
		\end{align*}
Then, applying $\Tilde\tau_{N-1}^{-r}$ and $\Tilde\tau_{N}^{-s}$ we get
		\begin{align}
			\lb{X^+_{N-1,r},X^+_{N,s+1}}  -\lb{X^+_{N,s},X^+_{N-1,r+1}}=0. \label{ij type D} 
		\end{align}

\noindent Case 5. $|i|=0$, $i=j=N$, and $\gs$ is of type $B$. We will show first that the relation \eqref{cubic relation} holds in $\Tilde{U}_q^D(\ghs)/\cI_\s$ if $\alpha_{N,\s}$ is either even or odd. Then, we will show that in the particular case when $|N|=0$, the relations \eqref{cubic relation} implies \eqref{s1 rel} with $i=j=N$. On the other hand, when $|N|=1$, the relations \eqref{cubic relation} implies \eqref{quadratic N}.

		We have that $S_N^\pm(0,0)=\lb{X^\pm_{N,0},\lb{X^\pm_{N,0},\lb{X^\pm_{N,0},X^\pm_{N-1,0}}}}\in \cI_\s$. Then, using the equations \eqref{q Jacobi 1}, \eqref{XXK Util}, \eqref{K commutator 1}, and \eqref{HX Util}, we get
		\begin{align*}
			0 & =  [S_N^\pm(0,0),X^-_{N-1,1}]=[X^+_{N,0},[X^+_{N,0},[X^+_{N,0},[X^+_{N-1,0},X^-_{N-1,1}]]_{q^{A_{N,N}}}]]_{q^{-A_{N,N}}} \\
			  & =[X^+_{N,0},[X^+_{N,0},[X^+_{N,0},K_{N-1}H_{N-1,1}]_{q^{A_{N,N}}}]]_{q^{-A_{N,N}}}                                       \\
			  & =q^{3A_{N,N}}K_{N-1}[X^+_{N,0},[X^+_{N,0},[X^+_{N,0},H_{N-1,1}]]_{q^{-A_{N,N}}}]_{q^{-2A_{N,N}}}                         \\
			  & =-q^{3A_{N,N}}u_{N-1,N,1}C^{-1}K_{N-1}[X^+_{N,0},[X^+_{N,0},X^+_{N,1}]_{q^{-A_{N,N}}}]_{q^{-2A_{N,N}}}.                  
		\end{align*}
        
		\noindent Hence, 
        \begin{align}\label{triple N deg 0}
            [X^+_{N,0},[X^+_{N,0},X^+_{N,1}]_{q^{-A_{N,N}}}]_{q^{-2A_{N,N}}}=\lb{X^+_{N,0},\lb{X^+_{N,0},X^+_{N,1}}} =0,
        \end{align}
        and
		the relation \eqref{cubic relation} follows from the fact that $\cI_\s$ is invariant by the action of $\Tilde\tau_N$, by commuting the previous equation with $H_{N,1}$, and induction.

Now,  to show that \eqref{s1 rel} with $i=j=N$ and \eqref{quadratic N} holds in $\Tilde{U}_q^D(\ghs)/\cI_\s$
    we use \eqref{triple N deg 0}
		\begin{align*}
			0=[X^+_{N,0},[X^+_{N,0},X^+_{N,1}]_{q^{-A_{N,N}}}]_{q^{-2A_{N,N}}}. 
		\end{align*}
		
		\noindent Then, using the identities \eqref{q Jacobi 1}, \eqref{XXK Util}, \eqref{K commutator 1}, \eqref{K commutator 2} and \eqref{HX Util}, we get
		\begin{align*}
			  & 0=[[X^+_{N,0},[X^+_{N,0},X^+_{N,1}]_{q^{-A_{N,N}}}]_{q^{-2A_{N,N}}},X^-_{N,-1}]=[X^+_{N,0},[X^+_{N,0},\frac{CK_N-(CK_N)^{-1}}{q-q^{-1}}]_{q^{-A_{N,N}}}]_{q^{-2A_{N,N}}}+                                                                                    \\ 
			  & \hphantom{=}+(-1)^{|N|}[X^+_{N,0},[(CK_N)^{-1}H_{N,-1},X^+_{N,1}]_{q^{-A_{N,N}}}]_{q^{-2A_{N,N}}} +[(CK_N)^{-1}H_{N,-1},[X^+_{N,0},X^+_{N,1}]_{q^{-A_{N,N}}}]_{q^{-2A_{N,N}}} \\
			  & =(CK_N)^{-1}\Big(-(-1)^{|N|}q^{A_{N,N}}[X^+_{N,0},X^+_{N,0}]_{q^{-3A_{N,N}}} + (-1)^{|N|}q^{A_{N,N}}[X^+_{N,0},[H_{N,-1},X^+_{N,1}]]_{q^{-3A_{N,N}}}+                         \\
			  & \hphantom{=} +[[H_{N,-1},X^+_{N,0}],X^+_{N,1}]]_{q^{-A_{N,N}}} +[X^+_{N,0},[H_{N,-1},X^+_{N,1}]]_{q^{-A_{N,N}}}\Big)                                                          \\
			  & =(CK_N)^{-1}\Big((u_{N,N,-1}-(-1)^{|N|})q^{A_{N,N}}[X^+_{N,0},X^+_{N,0}]_{q^{-3A_{N,N}}} +u_{N,N,-1}[X^+_{N,0},X^+_{N,0}]_{q^{-A_{N,N}}}+                                     \\
			  & \hphantom{=}+u_{N,N,-1}[X^+_{N,-1},X^+_{N,1}]_{q^{-A_{N,N}}}\Big),
		\end{align*}

\noindent which implies
\begin{align}\label{quadratic N deg 0}
    &(u_{N,N,-1}-(-1)^{|N|})q^{A_{N,N}}[X^+_{N,0},X^+_{N,0}]_{q^{-3A_{N,N}}} +u_{N,N,-1}(\lb{X^+_{N,0},X^+_{N,0}}+\lb{X^+_{N,-1},X^+_{N,1}})=0.
\end{align}

We now consider this last equation in the cases $|N|=0$ and $|N|=1$ separately.

If $|N|=1$, the equation \eqref{quadratic N deg 0} implies 
		\begin{align}\label{N quadratic r=s=-1}
			[X^+_{N,1},X^+_{N,-1}]_{q^{A_{N,N}}}- q^{2A_{N,N}}[X^+_{N,0},X^+_{N,0}]_{q^{-3A_{N,N}}}=0. 
		\end{align}
		
		\noindent The relation \eqref{quadratic N} follows from the fact that $\cI_\s$ is invariant by the action of $\Tilde\tau_N$, by commuting the equation \eqref{N quadratic r=s=-1} with $H_{N,1}$, and induction.

If $|N|=0$, the equation \eqref{quadratic N deg 0} implies

		\begin{align*}
			\lb{X^+_{N,0}, X^+_{N,0}} + \lb{X^+_{N,-1}, X^+_{N,1}}=0. 
		\end{align*}
		By commuting this last equation with $H_{i,1}$, using \eqref{HX Util}, and induction, we conclude that:
		\begin{align}\label{NN relation}
			\lb{X^+_{i,r}, X^+_{i,s+1}} + \lb{X^+_{i,s}, X^+_{i,r+1}}=0. 
		\end{align}

		To summarize, the equations \eqref{ij relation even}--\eqref{ij type D} and \eqref{NN relation} can all be written as
		\begin{align*}
			\lb{X^+_{i,r},X^+_{j,s+1}}  +(-1)^{|i||j|}\lb{X^+_{j,s},X^+_{i,r+1}}=0, 
		\end{align*}
		which is equivalent to \eqref{s1 rel}.
		
Note that if $|N|=0$ and $\gs$ is of type $B$, then the relation \eqref{s1 rel} with $i=j=N$ implies \eqref{cubic relation}. Therefore, we do not impose the relation \eqref{cubic relation} in the definition of the algebra.

        We now use the equations \eqref{XX rel}, \eqref{s1 rel}  and \eqref{quadratic N} to show that \eqref{HX rel} is satisfied as follows.
		Denoting $K_{i,r}^+=(q-q^{-1})C^{-r}K^{-1}_i[X^+_{i,r},X^-_{i,0}]$, for $r>0$, the series  $K_{i}^+(z)$ can be rewritten as $$K_{i}^+(z)=K_i(1+\sum_{ r\geq 0}K_{i, r}^+ z^{- r}).$$

		\noindent Then, using the equations \eqref{K commutator 2} and \eqref{q Jacobi 1}, we get:
		\begin{align*}
			[K_{i,r}^+,X^+_{j,0}] & =(q-q^{-1})C^{-r}[K^{-1}_i[X^+_{i,r},X^-_{i,0}],X^+_{j,0}]                =(q-q^{-1})C^{-r}K^{-1}_i[[X^+_{i,r},X^-_{i,0}],X^+_{j,0}]_{q^{A_{i,j}}}                                                                                                                \\
			                      & =(q-q^{-1})C^{-r}K^{-1}_i\left( [X^+_{i,r},[X^-_{i,0},X^+_{j,0}]]_{q^{A_{i,j}}}+(-1)^{|i||j|}[[X^+_{i,r},X^+_{j,0}]_{q^{A_{i,j}}},X^-_{i,0}]  \right)                                   \\
			                      & =(q-q^{-1})C^{-r}K^{-1}_i\left(-(-1)^{|i|}[X^+_{i,r},\delta_{i,j}\frac{K_i-K_i^{-1}}{q-q^{-1}}]_{q^{A_{i,j}}} +(-1)^{|i||j|}[[X^+_{i,r},X^+_{j,0}]_{q^{A_{i,j}}},X^-_{i,0}]     \right) \\
			                      & =(q-q^{-1})C^{-r}K^{-1}_i\left((-1)^{|i|}[A_{i,i}]_q\delta_{i,j}K_iX^+_{i,r} +(-1)^{|i||j|}[[X^+_{i,r},X^+_{j,0}]_{q^{A_{i,j}}},X^-_{i,0}]     \right).                                  
		\end{align*}
		We consider two cases.
		
		\noindent Case 1. Suppose $(i,j)\neq (N,N)$. Then, using the relation \eqref{s1 rel} 
		\begin{align*}
			[X^+_{i,r+1},X^+_{j,s}]_{q^{A_{i,j}}}+ (-1)^{|i||j|}[X^+_{j,s+1},X^+_{i,r}]_{q^{A_{i,j}}}=0, 
		\end{align*}
		we get: 
		\begin{align*}
			[K_{i,r}^+,X^+_{j,0}] & =(q-q^{-1})C^{-r}K^{-1}_i\left((-1)^{|i|}[A_{i,i}]_q\delta_{i,j}K_iX^+_{i,r} +(-1)^{|i||j|}[[X^+_{i,r},X^+_{j,0}]_{q^{A_{i,j}}},X^-_{i,0}]     \right) \\
			                      & =(q-q^{-1})C^{-r}K^{-1}_i\left((-1)^{|i|}[A_{i,i}]_q\delta_{i,j}K_iX^+_{i,r} -[[X^+_{j,1},X^+_{i,r-1}]_{q^{A_{i,j}}},X^-_{i,0}]     \right).           
		\end{align*}		
		Next subsequently applying \eqref{q Jacobi 1}, \eqref{XX rel}, \eqref{K commutator 1}, and \eqref{K commutator 2}, we have:
		
		\begin{align*}
			[[X^+_{j,1},X^+_{i,r-1}]_{q^{A_{i,j}}},X^-_{i,0}] & =[X^+_{j,1},[X^+_{i,r-1},X^-_{i,0}]]_{q^{A_{i,j}}} + (-1)^{|i|}[[X^+_{j,1},X^-_{i,0}],X^+_{i,r-1}]_{q^{A_{i,j}}}                                                             \\
			                                                  & =\left[X^+_{j,1},\frac{C^{r-1}K_i K^+_{i,r-1}}{q-q^{-1}}\right]_{q^{A_{i,j}}} + (-1)^{|i|}\delta_{i,j}\left[\frac{CK_i K^+_{i,1}}{q-q^{-1}},X^+_{i,r-1}\right]_{q^{A_{i,j}}} \\
			                                                  & =\frac{q^{-A_{i,j}}C^{r-1}K_i }{q-q^{-1}}[X^+_{j,1},K^+_{i,r-1}]_{q^{2A_{i,j}}} + (-1)^{|i|}\delta_{i,j}\frac{CK_i }{q-q^{-1}}[K^+_{i,1},X^+_{i,r-1}]                        \\
			                                                  & =\frac{q^{-A_{i,j}}C^{r-1}K_i }{q-q^{-1}}[X^+_{j,1},K^+_{i,r-1}]_{q^{2A_{i,j}}}+ (-1)^{|i|}\delta_{i,j}CK_i [H_{i,1},X^+_{i,r-1}]                                            \\
			                                                  & =\frac{q^{-A_{i,j}}C^{r-1}K_i }{q-q^{-1}}[X^+_{j,1},K^+_{i,r-1}]_{q^{2A_{i,j}}}+ (-1)^{|i|}[A_{i,i}]_q\delta_{i,j}K_i X^+_{i,r}.                                             
		\end{align*}
		Thus,
		\begin{align*}
			[K_{i,r}^+,X^+_{j,0}] &  =-q^{-A_{i,j}}C^{-1}[X^+_{j,1},K^+_{i,r-1}]_{q^{2A_{i,j}}}.                                                                                 
		\end{align*}
		Which can be rewritten as
		\begin{align*}
			K_{i,r}^+X^+_{j,0}-q^{A_{i,j}}C^{-1}K_{i,r-1}^+X^+_{j,1}= X^+_{j,0}K_{i,r}^+-q^{-A_{i,j}}C^{-1}X^+_{j,1}K_{i,r-1}^+. 
		\end{align*}
Wrapping up these relations into a series we obtain:
		\begin{align*}
			(1-q^{A_{i,j}}z^{-1}C^{-1}(-1)^j\tau_j^{-1})K_i^{-1}K^+_{i}(z)X^+_{j,0}= (1-q^{-A_{i,j}}z^{-1}C^{-1}(-1)^j\tau_j^{-1})X^+_{j,0}K_i^{-1}K^+_{i}(z), 
		\end{align*}
		which is equivalent to 
		\begin{align*}
			(q-q^{-1})\sum_{r>0}[H_{i,r},X^+_{j,0}]z^{-r}=\left(\log(1-q^{-A_{i,j}}z^{-1}C^{-1}(-1)^j\tau_j^{-1})-\log(1-q^{A_{i,j}}z^{-1}C^{-1}(-1)^j\tau_j^{-1})\right)X^+_{j,0}. 
		\end{align*}
		Therefore, for $r>0$, we have
		\begin{align*}
			[H_{i,r},X^+_{j,0}]=\frac{[rA_{i,j}]_q}{r}C^{-r}X^+_{j,r}. 
		\end{align*}
		
		\noindent The relation \eqref{HX rel} when $(i,j)\neq (N,N)$ follows for all $r\neq 0$ by applying the anti-automorphism $\tilde{\eta}$.
		
		\noindent Case 2. Now, we suppose that $i=j=N$, $|N|=1$, and $\gs$ is of type $B$. In this case computation is longer, but the strategy is the same.
		
		Let $r=2$. Then, using the relation \eqref{quadratic N}
		\begin{align*}
			[X^+_{N,2},X^+_{N,0}]_{q^{A_{N,N}}}- q^{2A_{N,N}}[X^+_{N,1},X^+_{N,1}]_{q^{-3A_{N,N}}}=0 
		\end{align*}
		we get
		\begin{align*}
			[K_{N,2}^+,X^+_{N,0}] & = (q-q^{-1})C^{-2}K^{-1}_N\left((-1)^{|N|}[A_{N,N}]_qK_NX^+_{N,2} +(-1)^{|N|}[[X^+_{N,2},X^+_{N,0}]_{q^{A_{N,N}}},X^-_{N,0}]     \right)               \\
			                      & =  (q-q^{-1})C^{-2}K^{-1}_N\Big((-1)^{|N|}[A_{N,N}]_q K_NX^+_{N,2} +(-1)^{|N|} q^{2A_{N,N}}[[X^+_{N,1},X^+_{N,1}]_{q^{-3A_{N,N}}},X^-_{N,0}]     \Big). 
		\end{align*}
		
		\noindent Using the equations \eqref{q Jacobi 1}, \eqref{K commutator 1} and \eqref{K commutator 2}, we get
		\begin{align}\label{KNr commutator}
			[[X^+_{N,1},X^+_{N,1}]_{q^{-3A_{N,N}}},X^-_{N,0}] & =[X^+_{N,1},\frac{CK_NK^+_{N,1}}{q-q^{-1}}]_{q^{-3A_{N,N}}} + (-1)^{|N|}[\frac{CK_NK^+_{N,1}}{q-q^{-1}},X^+_{N,1}]_{q^{-3A_{N,N}}} \notag \\
			                                                  & =\frac{CK_N}{q-q^{-1}}\left(q^{-A_{N,N}}[X^+_{N,1},K^+_{N,1}]_{q^{-2A_{N,N}}} + (-1)^{|N|}[K^+_{N,1},X^+_{N,1}]_{q^{-3A_{N,N}}}\right)    \\
			                                                  & =((-1)^{|N|}-q^{-3A_{N,N}})\frac{CK_N}{q-q^{-1}}\left(K^+_{N,1}X^+_{N,1} + (-1)^{|N|}q^{-A_{N,N}} X^+_{N,1}K^+_{N,1}  \right).\notag      
		\end{align}
		
		\noindent Thus,
		\begin{align}\label{KN2}
			[K_{N,2}^+,X^+_{N,0}] & =  (-1)^{|N|}(q-q^{-1})C^{-2}K^{-1}_N\Big([A_{N,N}]_q K_NX^+_{N,2} + q^{2A_{N,N}}[[X^+_{N,1},X^+_{N,1}]_{q^{-3A_{N,N}}},X^-_{N,0}]     \Big) \notag \\
			                      & =(q^{2A_{N,N}}-(-1)^{|N|}q^{-A_{N,N}})C^{-1}\left(K^+_{N,1}X^+_{N,1} + (-1)^{|N|}q^{-A_{N,N}} X^+_{N,1}K^+_{N,1}  \right)+                          \\
			                      & \hphantom{=}+(-1)^{|N|}(q^{A_{N,N}}-q^{-A_{N,N}}) C^{-2}X^+_{N,2}.\notag                                                                            
		\end{align}
		
		Further let $r>2$, and using the relation 
		\begin{align*}
			[X^+_{N,r},X^+_{N,0}]_{q^{A_{N,N}}} + [X^+_{N,2},X^+_{N,r-2}]_{q^{A_{N,N}}} -q^{2A_{N,N}}[X^+_{N,r-1},X^+_{N,1}]_{q^{-3A_{N,N}}} - &     \\
			- q^{2A_{N,N}}[X^+_{N,1},X^+_{N,r-1}]_{q^{-3A_{N,N}}}                                                                              & =0, 
		\end{align*}
		we get
		\begin{align*}
			[K_{N,r}^+,X^+_{N,0}] & = (q-q^{-1})C^{-r}K^{-1}_N(-1)^{|N|}\left([A_{N,N}]_qK_NX^+_{N,r} +[[X^+_{N,r},X^+_{N,0}]_{q^{A_{N,N}}},X^-_{N,0}]     \right)            \\
			                      & =  (q-q^{-1})C^{-r}K^{-1}_N(-1)^{|N|}\big([A_{N,N}]_qK_NX^+_{N,r}+ q^{2A_{N,N}}[[X^+_{N,r-1},X^+_{N,1}]_{q^{-3A_{N,N}}},X^-_{N,0}]+       \\
			                      & \hphantom{=}+  q^{2A_{N,N}}[[X^+_{N,1},X^+_{N,r-1}]_{q^{-3A_{N,N}}},X^-_{N,0}] - [[X^+_{N,2},X^+_{N,r-2}]_{q^{A_{N,N}}},X^-_{N,0}] \big). 
		\end{align*}
		
		\noindent Similar to \eqref{KNr commutator}, we have for the second and third summands 
		\begin{align*}
			  & [[X^+_{N,r-1},X^+_{N,1}]_{q^{-3A_{N,N}}},X^-_{N,0}] + [[X^+_{N,1},X^+_{N,r-1}]_{q^{-3A_{N,N}}},X^-_{N,0}]=                                                        \\
			  & \hphantom{aaaaaaaaaaaaaaa}=\frac{K_N}{q-q^{-1}}((-1)^{|N|}-q^{-3A_{N,N}})\Big( C\big(K_{N,1}^+ X^+_{N,r-1}+ (-1)^{|N|}q^{-A_{N,N}} X^+_{N,r-1} K_{N,1}^+   \big)+ \\
			  & \hphantom{aaaaaaaaaaaaaaa=}+  C^{r-1}\big(K_{N,r-1}^+ X^+_{N,1}+ (-1)^{|N|}q^{-A_{N,N}} X^+_{N,1} K_{N,r-1}^+   \big)         \Big),                              
		\end{align*}
		and for the last one
		\begin{align*}
			  & [[X^+_{N,2},X^+_{N,r-2}]_{q^{A_{N,N}}},X^-_{N,0}]=\frac{K_N}{q-q^{-1}}\Big( C^{r-2}\big( q^{-A_{N,N}} X^+_{N,2} K_{N,r-2}^+ -q^{A_{N,N}}K_{N,r-2}^+ X^+_{N,2}   \big)+             \\
			  & \hphantom{[[X^+_{N,2},X^+_{N,r-2}]_{q^{A_{N,N}}},X^-_{N,0}]=}+  (-1)^{|N|}C^{2}[K_{N,2}^+, X^+_{N,r-2}]         \Big)                                                              \\
			  & \hphantom{[[X^+_{N,2},X^+_{N,r-2}]_{q^{A_{N,N}}},X^-_{N,0}]}=\frac{K_N}{q-q^{-1}}\Big( C^{r-2}\big( q^{-A_{N,N}} X^+_{N,2} K_{N,r-2}^+ -q^{A_{N,N}}K_{N,r-2}^+ X^+_{N,2}   \big) + \\
			  & \hphantom{[[X^+_{N,2},X^+_{N,r-2}]_{q^{A_{N,N}}},X^-_{N,0}]=}+((-1)^{|N|}q^{2A_{N,N}}-q^{-A_{N,N}})C\left(K^+_{N,1}X^+_{N,1} + (-1)^{|N|}q^{-A_{N,N}} X^+_{N,1}K^+_{N,1}  \right)+ \\
			  & \hphantom{[[X^+_{N,2},X^+_{N,r-2}]_{q^{A_{N,N}}},X^-_{N,0}]=} + (q^{A_{N,N}}-q^{-A_{N,N}}) X^+_{N,2}\Big).                                                                         
		\end{align*}
		Substituting them back into the expression for $[K_{N,r}^+,X^+_{N,0}]$,
		\begin{align*}
			[K_{N,r}^+,X^+_{N,0}] & = ((-1)^{|N|}q^{A_{N,N}}-q^{-2A_{N,N}})X^+_{N,1}K_{N,r-1}^ + +(q^{2A_{N,N}}-(-1)^{|N|}q^{-A_{N,N}})K_{N,r-1}^+X^+_{N,1} + \\
			                      & \hphantom{=}+(-1)^{|N|}( q^{A_{N,N}}K_{N,r-2}^+X^+_{N,2}-q^{-A_{N,N}}X^+_{N,2}K_{N,r-2}^+).                               
		\end{align*}
		Wrapping up these relations into a series we obtain
		\begin{align*}
			  & (1-(-1)^N q^{2A_{N,N}}(Cz\tau_N)^{-1})(1+(-1)^{|N|+N}q^{-A_{N,N}}(Cz\tau_N)^{-1}))K_N^{-1}K^+_{N}(z)X^+_{N,0}=                         \\
			  & \hphantom{==========}= (1-(-1)^N q^{-2A_{N,N}}(Cz\tau_N)^{-1}))(1+(-1)^{|N|+N}q^{A_{N,N}}(Cz\tau_N)^{-1}))X^+_{N,0}K_N^{-1}K^+_{N}(z), 
		\end{align*}
		which is equivalent to 
		\begin{align*}
			(q-q^{-1})\sum_{r>0}[H_{N,r},X^+_{N,0}]z^{-r}= & \Big(\log(1-(-1)^N q^{-2A_{N,N}}(Cz\tau_N)^{-1})+                \\
			                                               & +\log(1+(-1)^{|N|+N}q^{A_{N,N}}(Cz\tau_N)^{-1})-                 \\
			                                               & -\log(1-(-1)^Nq^{2A_{N,N}}(Cz\tau_N)^{-1})-                      \\
			                                               & -\log(1+(-1)^{|N|+N}q^{-A_{N,N}}(Cz\tau_N)^{-1}) \Big)X^+_{N,0}. 
		\end{align*}
		Therefore, for $r>0$, we have
		\begin{align*}
			[H_{N,r},X^+_{N,0}]=C^{-r}\left(\frac{[2rA_{N,N}]_q}{r}-(-1)^{(|N|+1)r}\frac{[rA_{N,N}]_q}{r}\right)X^+_{N,r}. 
		\end{align*}
		
		The relation \eqref{HX rel} with $i=j=N$ and $|N|=1$ follows for all $r\neq 0$ by applying the anti-automorphism $\tilde{\eta}$.

		\medskip
		
		Now, we show that the relation \eqref{HH rel} is a consequence of the equations \eqref{HX rel} and \eqref{q Jacobi 2}.

		\noindent Since $[H_{i,r}, H_{j,s}]=-[H_{j,s}, H_{i,r}]=\Tilde{\Omega}[H_{j,-s}, H_{i,-r}]$, we may assume $|r|\geq s>0$. Then,
		\begin{align*}
			[H_{i,r}, K^+_{j,s}]= & [H_{i,r},(q-q^{-1})C^{-s}K_j^{-1}[X^+_{j,s},X^-_{j,0}]]=                                                                                \\
			=                     & (q-q^{-1})C^{-s}K_j^{-1}\big([[H_{i,r},X^+_{j,s}],X^-_{j,0}]+[X^+_{j,s},[H_{i,r},X^-_{j,0}]]\big)                                       \\
			=                     & (q-q^{-1})C^{-s}K_j^{-1}u_{i,j,r}\big(C^{-(r+|r|)/2}[X^+_{j,r+s},X^-_{j,0}] -C^{-(r-|r|)/2}[X^+_{j,s},X^-_{j,r}]\big)                   \\
			=                     & C^{-s}K_j^{-1}u_{i,j,r}\big(C^{-(r+|r|)/2}(C^{r+s}K_jK^+_{j,r+s}-K_j^{-1}K^-_{j,r+s}) -(C^{s}K_jK^+_{j,r+s}-C^{r}K_j^{-1}K^-_{j,r+s})\big)                  \\
			=                     & C^{-s}K_j^{-1}u_{i,j,r}\big(C^{s}(C^{(r-|r|)/2}-C^{(|r|-r)/2})K_jK^+_{j,r+s} +(C^{(r+|r|)/2}-C^{-(r+|r|)/2})K_j^{-1}K^-_{j,r+s}  \big).
		\end{align*}
		By hypothesis, $|r|\geq s>0$. If $r>0$, then $r+s>0$ and $K^-_{j,r+s}=0$. Thus, $ [H_{i,r}, K^+_{j,s}]=0$.
		 If $r<0$, then $r+s\leq 0$ and $K^+_{j,r+s}=\delta_{r+s,0}$. Thus, in both cases, we have
		\begin{align*}
			[H_{i,r}, K^+_{j,s}]=u_{i,j,r}\delta_{r+s,0}(C^{r}-C^{-r}). 
		\end{align*} 
		Therefore,
		\begin{align*}
			[H_{i,r}, {H}_{j,s}]=u_{i,j,r}\delta_{r+s,0}\frac{C^{r}-C^{-r}}{q-q^{-1}}. 
		\end{align*}

		It remains to show that the relation \eqref{cubic N N-1} holds in $\Tilde{U}_q^D(\ghs)/\cI_\s$. 
		
		Recall that $A_{N,N}=-A_{N,N-1}$. Hence, we can write
		\begin{align*}
			S_N^\pm(0,0) & =[X^+_{N,0},[X^+_{N,0},[X^+_{N,0},X^+_{N-1,0}]_{q^{A_{N,N}}}]]_{q^{-A_{N,N}}}  \\
			             & =[[[X^+_{N-1,0},X^+_{N,0}]_{q^{A_{N,N}}},X^+_{N,0}],X^+_{N,0}]_{q^{-A_{N,N}}}. 
		\end{align*}		
		Then, using the equations \eqref{q Jacobi 1}, \eqref{K commutator 2}, and \eqref{K commutator 1}, we have 
		\begin{align*}
			0   & =[S_N^\pm(0,0),X^-_{N,1}]   =[[[X^+_{N-1,0},K_NH_{N,1}]_{q^{A_{N,N}}},X^+_{N,0}],X^+_{N,0}]_{q^{-A_{N,N}}} \\
			    & +(-1)^{|N|}[[[X^+_{N-1,0},X^+_{N,0}]_{q^{A_{N,N}}},K_NH_{N,1}],X^+_{N,0}]_{q^{-A_{N,N}}}                    +[[[X^+_{N-1,0},X^+_{N,0}]_{q^{A_{N,N}}},X^+_{N,0}],K_NH_{N,1}]_{q^{-A_{N,N}}}                                                                                                  \\
			    & =K_N\Big(q^{A_{N,N}} [[[X^+_{N-1,0},H_{N,1}],X^+_{N,0}]_{q^{-A_{N,N}}},X^+_{N,0}]_{q^{-2A_{N,N}}}+                                                                                          \\
			    & \hphantom{=}+(-1)^{|N|}[[[X^+_{N-1,0},X^+_{N,0}]_{q^{A_{N,N}}},H_{N,1}],X^+_{N,0}]_{q^{-2A_{N,N}}} +q^{-A_{N,N}}[[[X^+_{N-1,0},X^+_{N,0}]_{q^{A_{N,N}}},X^+_{N,0}],H_{N,1}]\Big)           \\
			 & =K_N\Big(q^{A_{N,N}} [[[X^+_{N-1,0},H_{N,1}],X^+_{N,0}]_{q^{-A_{N,N}}},X^+_{N,0}]_{q^{-2A_{N,N}}}+                                                                                          \\
			    & (-1)^{|N|}[[[X^+_{N-1,0},[X^+_{N,0},H_{N,1}]]_{q^{A_{N,N}}},X^+_{N,0}]_{q^{-2A_{N,N}}} +(-1)^{|N|}[[[X^+_{N-1,0},H_{N,1}],X^+_{N,0}]_{q^{A_{N,N}}},X^+_{N,0}]_{q^{-2A_{N,N}}} \\
			    & \hphantom{=}+q^{-A_{N,N}}[[X^+_{N-1,0},X^+_{N,0}]_{q^{A_{N,N}}},[X^+_{N,0},H_{N,1}]] +q^{-A_{N,N}}[[X^+_{N-1,0},[X^+_{N,0},H_{N,1}]]_{q^{A_{N,N}}},X^+_{N,0}]+                              \\
			    & \hphantom{=}+q^{-A_{N,N}}[[[X^+_{N-1,0},H_{N,1}],X^+_{N,0}]_{q^{A_{N,N}}},X^+_{N,0}]\Big).                     
		\end{align*}
		Next we apply the equation \eqref{HX Util} then expanding and reassembling the commutators, we get
		\begin{align*}
			0   & =-C^{-1}K_N\Big( u_{N-1,N,1}\big(q^{A_{N,N}}[[X^+_{N-1,1},X^+_{N,0}]_{q^{-A_{N,N}}},X^+_{N,0}]_{q^{-2A_{N,N}}}+                                                         \\
			    & \hphantom{=}+(-1)^{|N|}[[X^+_{N-1,1},X^+_{N,0}]_{q^{A_{N,N}}},X^+_{N,0}]_{q^{-2A_{N,N}}} +q^{-A_{N,N}}[[X^+_{N-1,1},X^+_{N,0}]_{q^{A_{N,N}}},X^+_{N,0}]                 
			\big)+\\
			    & \hphantom{=}+u_{N,N,1}\big((-1)^{|N|}[[[X^+_{N-1,0},X^+_{N,1}]_{q^{A_{N,N}}},X^+_{N,0}]_{q^{-2A_{N,N}}} +q^{-A_{N,N}}[[X^+_{N-1,0},X^+_{N,0}]_{q^{A_{N,N}}},X^+_{N,1}]+ \\
			    & \hphantom{=}+q^{-A_{N,N}}u_{N,N,1}[[X^+_{N-1,0},X^+_{N,1}]_{q^{A_{N,N}}},X^+_{N,0}]\big)\Big)                                                                           \\
			\if & =-C^{-1}K_N\Big(u_{N-1,N,1}\big( q^{-A_{N,N}}(q^{A_{N,N}}+q^{-A_{N,N}}+(-1)^{|N|})[[X^+_{N-1,0},X^+_{N,1}]_{q^{A_{N,N}}},X^+_{N,0}] \big)+                              \\
			    & +u_{N,N,1}\big((-1)^{|N|}[[[X^+_{N-1,0},X^+_{N,1}]_{q^{A_{N,N}}},X^+_{N,0}]_{q^{-2A_{N,N}}} +q^{-A_{N,N}}[[X^+_{N-1,0},X^+_{N,0}]_{q^{A_{N,N}}},X^+_{N,1}]+             \\
			    & +q^{-A_{N,N}}u_{N,N,1}[[X^+_{N-1,0},X^+_{N,1}]_{q^{A_{N,N}}},X^+_{N,0}]\big)\Big)                                                                       \\
			    & =-C^{-1}K_N\Big(u_{N-1,N,1}\big( q^{-A_{N,N}}(q^{A_{N,N}}+q^{-A_{N,N}}+(-1)^{|N|})[[X^+_{N-1,0},X^+_{N,1}]_{q^{A_{N,N}}},X^+_{N,0}] \big)+                              \\
			    & +u_{N,N,1}\big( (-1)^{|N|} X^+_{N-1,0}X^+_{N,1}X^+_{N,0} -(-1)^{|N|+|N-1||N|}q^{A_{N,N}} X^+_{N,1}X^+_{N-1,0}X^+_{N,0}-                                                 \\
			    & -(-1)^{|N-1||N|}q^{-2A_{N,N}} X^+_{N,0}X^+_{N-1,0}X^+_{N,1} +q^{-A_{N,N}} X^+_{N,0}X^+_{N,1}X^+_{N-1,0}+                                                                \\
			    & +q^{-A_{N,N}} X^+_{N-1,0}X^+_{N,0}X^+_{N,1} -(-1)^{|N-1||N|} X^+_{N,0}X^+_{N-1,0}X^+_{N,1}-                                                                             \\
			    & -(-1)^{|N|+|N-1||N|}q^{-A_{N,N}} X^+_{N,1}X^+_{N-1,0}X^+_{N,0} +(-1)^{|N|} X^+_{N,1}X^+_{N,0}X^+_{N-1,0}+                                                               \\
			    & +q^{-A_{N,N}} X^+_{N-1,0}X^+_{N,1}X^+_{N,0} -(-1)^{|N-1||N|} X^+_{N,1}X^+_{N-1,0}X^+_{N,0}-                                                                             \\
			    & -(-1)^{|N|+|N-1||N|}q^{-A_{N,N}} X^+_{N,0}X^+_{N-1,0}X^+_{N,1}+(-1)^{|N|} X^+_{N,0}X^+_{N,1}X^+_{N-1,0}\big)\Big)                                                       \\
			    & =-C^{-1}K_N\Big(u_{N-1,N,1}\big( q^{-A_{N,N}}(q^{A_{N,N}}+q^{-A_{N,N}}+(-1)^{|N|})[[X^+_{N-1,0},X^+_{N,1}]_{q^{A_{N,N}}},X^+_{N,0}] \big)+                              \\
			    & +u_{N,N,1}\big( (q^{A_{N,N}}+q^{-A_{N,N}}+(-1)^{|N|})\big((-1)^{|N|}q^{-A_{N,N}} X^+_{N-1,0}X^+_{N,1}X^+_{N,0}-                                                         \\
			    & -(-1)^{|N|+|N-1||N|} X^+_{N,1}X^+_{N-1,0}X^+_{N,0}  -(-1)^{|N-1||N|}q^{-A_{N,N}} X^+_{N,0}X^+_{N-1,0}X^+_{N,1}   + X^+_{N,0}X^+_{N,1}X^+_{N-1,0}\big)+                  \\
			    & + (-1)^{|N|} X^+_{N,1}X^+_{N,0}X^+_{N-1,0} + q^{-A_{N,N}} X^+_{N-1,0}X^+_{N,0}X^+_{N,1}-                                                                                \\ 
			    & -(-1)^{|N|}q^{-2A_{N,N}} X^+_{N-1,0}X^+_{N,1}X^+_{N,0} -q^{A_{N,N}} X^+_{N,0}X^+_{N,1}X^+_{N-1,0}\big)\Big)                                                             \\
			    & =-C^{-1}K_N\Big( (q^{A_{N,N}}+q^{-A_{N,N}}+(-1)^{|N|})(u_{N-1,N,1}+ (-1)^{|N|}u_{N,N,1})[[X^+_{N-1,0},X^+_{N,1}]_{q^{A_{N,N}}},X^+_{N,0}]+                              \\
			    & +(-1)^{|N|}u_{N,N,1}[[X^+_{N,1},X^+_{N,0}]_{q^{A_{N,N}}},X^+_{N-1,0}]_{q^{-2A_{N,N}}}                                                                                   
			\Big)\\\fi
			    & =-C^{-1}K_N(q^{A_{N,N}}+q^{-A_{N,N}}+(-1)^{|N|})\Big( q^{-A_{N,N}}(q+q^{-1})[[X^+_{N-1,0},X^+_{N,1}]_{q^{A_{N,N}}},X^+_{N,0}]+                                          \\
			    & \hphantom{=}+[[X^+_{N,1},X^+_{N,0}]_{q^{A_{N,N}}},X^+_{N-1,0}]_{q^{-2A_{N,N}}}                                                                                          
			\Big).
		\end{align*}
				Thus,	
		\begin{align}\label{cubic N N-1, 00}
			(q+q^{-1})[[X^+_{N-1,0},X^+_{N,1}]_{q^{A_{N,N}}},X^+_{N,0}]                               
			+q^{A_{N,N}}[[X^+_{N,1},X^+_{N,0}]_{q^{A_{N,N}}},X^+_{N-1,0}]_{q^{-2A_{N,N}}} \in \cI_\s. 
		\end{align}
		As before, the relations \eqref{cubic N N-1} follow by induction, after applying the automorphisms $\Tilde\tau_{N-1}$ and $\Tilde\tau_N$, and commuting equation \eqref{cubic N N-1, 00} with $H_{N,1}$.
	\end{proof}
	    
\end{lem}

\subsubsection*{Proof of Theorem \ref{thm_main}}
Since $\cI_\s\subset \Ker  \Tilde{\psi}_\s$,  we have an induced surjective homomorphism $\psi_\s:\Tilde{U}_q^D(\ghs)/\cI_\s\rightarrow \Uqghs$, which, together with Lemma \ref{lem_quo}, gives the desired surjective homomorphism ${U}_q^D(\ghs)\rightarrow \Uqghs.$   \qed

\begin{conj}
    The surjective homomorphism $\psi_s: U_q^D(\widehat{\mathfrak{g}}_s) \to U_q(\widehat{\mathfrak{g}}_s)$ from Theorem \ref{thm_main} is an isomorphism.
\end{conj}

The proof of this conjecture and the PBW basis of $U_q^D(\widehat{\mathfrak{g}}_s)$ will appear in \cite{BKL}.

\medskip
\section*{Acknowledgments}
The authors thank A. Tsymbaliuk for a valuable comment. The authors are also grateful to the anonymous referee for valuable comments that helped improve the exposition.

Luan Bezerra was supported by the Guangdong Basic and Applied Basic Research Foundation grant (No. 2024A1515013079). 
Vyacheslav Futorny was supported by NSFC grants (No.12350710178 and No.12350710787). Iryna Kashuba acknowledges financial
support from Guangdong Basic and Applied Basic Research Foundation grant (No.2024A1515013079) and the NSFC grant (No.12350710787).
\medskip

\section*{Declarations}

Data sharing is not applicable to this article as no new data were created or analyzed in this study.

The authors have no conflict of interest to declare that are relevant to the content of this
article.

\bibliography{main}{}
\bibliographystyle{siam}
\end{document}